\newtheorem{Definition}{Definition}[section]
\newtheorem{Theorem}{Theorem}[section]
\newtheorem{Lemma}{Lemma}[section]
\newtheorem{Corollary}{Corollary}[section]
\newtheorem{Proposition}{Proposition}[section]
\newtheorem{Remark}{Remark}[section]
\newcommand{\supp}{\operatorname{supp}}
\newcommand{\WF}{\operatorname{WF}}
\newcommand{\sing}{\operatorname{singsupp}}
\begin{document}

\sloppy

\title[An introduction to extended Gevrey regularity]
{An introduction to extended Gevrey regularity}

\author{Nenad TEOFANOV \and Filip TOMI\'C \and Milica \v ZIGI\'C}

\address[Nenad Teofanov, Milica \v Zigi\'c]{Department of Mathematics and Informatics, Faculty of Sciences, University of Novi Sad, Serbia.}
\address[Filip Tomi\'c]{Faculty of Technical Sciences, University of Novi Sad, Serbia.}

\email[Corresponding author]{nenad.teofanov@dmi.uns.ac.rs}
\email{filip.tomic@uns.ac.rs}
\email{milica.zigic@dmi.uns.ac.rs}

\thanks{Corresponding author: Nenad Teofanov, nenad.teofanov@dmi.uns.ac.rs}

\date{April 2024.}

\begin{abstract}
Gevrey classes are the most common choice when considering the regularities of smooth functions that are not analytic. However, in various situations, it is important to consider smoothness properties that go beyond Gevrey regularity, for example when initial value problems are ill-posed in Gevrey settings. Extended Gevrey classes provide a convenient framework for studying smooth functions that possess weaker regularity than any Gevrey function. Since the available literature on this topic is scattered, our aim is to provide an overview to extended Gevrey regularity, highlighting its most important features. Additionally, we consider related dual spaces of ultradistributions and review some results on micro-local analysis in the context of extended Gevrey regularity. We conclude the paper with a few selected applications that may motivate further study of the topic.
\end{abstract}

\keywords{Ultradifferentiable functions; Gevrey classes; ultradistributions; wave-front sets}

\subjclass[2020]{46F05; 46E10; 35A18}

\maketitle

\section{Introduction}\label{sec0}

Gevrey type regularity was introduced in the study of fundamental
solutions of the heat equation in \cite{Gevrey} and subsequently used to describe regularities stronger than smoothness ($C^{\infty}$-regula\-ri\-ty) and weaker than analyticity. This property turns out to be important in the general theory of linear partial differential equations, such as hypoellipticity, local solvability, and propagation of singularities, cf.  \cite{Rodino}.
In particular, the Cauchy problem for weakly hyperbolic linear partial differential equations (PDEs) can be well-posed in certain Gevrey classes, while at the same time being ill-posed in the class of analytic functions, as shown in \cite{ChenRodino, Rodino}.

Since there is a gap between Gevrey regularity and smoothness, it is important to study classes of smooth functions that do not belong to any Gevrey class. For example,  J\'ez\'equel  \cite{J} proved that the trace formula for Anosov flows in dynamical systems holds for certain intermediate regularity classes, and Cicognani and  Lorenz
used a different intermediate regularity when studying the well-posedness of strictly hyperbolic equations in \cite{CL}.

A systematic study of smoothness that goes beyond any Gevrey regularity was proposed in \cite{PTT-01, PTT-02}. This was accomplished by introducing two-parameter dependent sequences of the form $(p^{\tau p^{\sigma}})_p $, where $\tau >0$, $\sigma > 1 $. These sequences give rise to classes of ultradifferentiable functions $ {\mathcal E}_{\tau, \sigma}( \mathbb{R}^d)$, which differ from classical Carleman classes $C^L ( \mathbb{R}^d)$ (cf. \cite{H}), are larger than J\'ez\'equel's classes, and which go beyond Komatsu's approach to ultradifferentiable functions as described in, for example, \cite{Komatsuultra1}.
On one hand, these classes, called Pilipovi\'c-Teofanov-Tomi\'c classes in   \cite{Garrido-2}, serve as a prominent example of the generalized matrix approach to ulradifferentiable functions. On the other hand, they provide asymptotic estimates in terms of the Lambert functions, which have proven to be useful in various contexts, as discussed in \cite{CL,Garrido-1,TTT-24}.

Different aspects of the so-called {\em extended Gevrey regularity}, i.e., the regularity of ultradifferentiable functions from $ {\mathcal E}_{\tau, \sigma}( \mathbb{R}^d)$, have been studied in a dozen papers published in the last decade. Our aim is to offer a self-contained introduction to the subject and illuminate its main features. We provide proofs that, in general, simplify and complement those in the existing literature. Additionally, we present some new results, such as Proposition \ref{prop:extendedGervey-nizovi}, Proposition \ref{propalgebra}, and Theorem \ref{teoremaInverseClosedness} for the Beurling case, as well as Theorem \ref{Thm:Peli-Viner}.

This survey begins with preliminary Section \ref{sec1}, which covers the main properties of defining sequences, the Lambert function, and the associated function to a given sequence. We emphasize the remarkable connection between the associated function and the Lambert $W$ function (see Theorem \ref{TeoremaAsocirana}), which provides an elegant formulation of decay properties of the (short-time) Fourier transform of $f \in {\mathcal E}_{\tau, \sigma}(\mathbb{R}^d)$, as demonstrated in Proposition \ref{NASPW} and Corollary \ref{posledica1}.
In Section \ref{SectionSpaces}, we introduce the extended Gevrey classes ${\mathcal E}_{\tau, \sigma}( \mathbb{R}^d)$ and the corresponding spaces of ultradistributions. We then present their main properties, such as inverse closedness (Theorem \ref{teoremaInverseClosedness})
and the Paley-Wiener type theorem (Theorem \ref{Thm:Peli-Viner}).

In Section \ref{sec:WFs}, we give an application of extended Gevrey regularity in micro-local analysis. More precisely, we introduce wave-front sets, which detect singularities that are "stronger" than classical $C^\infty$ singularities and, at the same time, "weaker" than any Gevrey type singularity.

To provide a flavor of possible applications of extended Gevrey regularity, in Section \ref{sec:appl}, we briefly outline some results from \cite{CL} and \cite{Garrido-2}. More precisely, we present a result from \cite{CL} concerning the well-posedness of
strictly hyperbolic equations in ${\mathcal E}_{1, 2}( \mathbb{R}^d)$,
and observations from \cite{Garrido-2}, where the extended Gevrey classes are referred to as Pilipovi\'c-Teofanov-Tomi\'c classes and are considered within the extended matrix approach to ultradifferentiable classes.

We end this section by introducing some notation that will be used in the sequel.

\subsection{Notation} \label{notation}

We use the standard notation: $\mathbb{N}$, $\mathbb{N}_0$, $\mathbb{Z}$, $\mathbb{R}$,  $\mathbb{R}_+$, $\mathbb{C}$, denote sets of positive integers, non-negative integers, integers, real numbers, positive real numbers and complex numbers, respectively. The length of a multi-index $\alpha=(\alpha_1,\dots,\alpha_d) \in \mathbb{N}_0 ^d$ is denoted by $ |\alpha| = \alpha_1 + \alpha_2 + \dots + \alpha_d$ and $\alpha !:=\alpha_{1} ! \cdots \alpha_{d} !$. For $x=(x_1,\dots,x_d)\in\mathbb R^d$ we denote: $|x|:=\left(x_1^2+\ldots+x_d^2\right)^{1 / 2},$ $x^\alpha:=\prod_{j=1}^d x_j^{\alpha_j},$ and $D^\alpha=D_x^\alpha:=D_1^{\alpha_1} \cdots D_d^{\alpha_d}$, where $\displaystyle D_j^{\alpha_j}:=\left(-\frac{1}{2\pi i} \frac{\partial} { \partial x_j}\right)^{\alpha_j},$ $j=1, \dots, d$.

We write $ L^p (\mathbb{R}^d) $, $ 1\leq p\leq \infty$,  for the Lebesgue spaces, and  $ \mathcal{S} (\mathbb{R}^d)$ denotes the Schwartz space of infinitely smooth ($ C^\infty (\mathbb{R}^d)$) functions which, together with their derivatives, decay at infinity faster than any inverse polynomial. By $ \mathcal{S}' (\mathbb{R}^d)$ we denote the dual of $\mathcal{S} (\mathbb{R}^d)$, the space of tempered distributions, and $\mathcal D'(\mathbb R^d)$ is the dual of $\mathcal D(\mathbb R^d)=C^\infty _0 (\mathbb{R}^d)$, the space of compactly supported infinitely smooth functions.

We use brackets $\langle f,g\rangle$ to denote the extension of the inner product $\langle f,g\rangle=\int f(t){\overline {g(t)}}dt$ on $L^2(\mathbb{R}^d)$ to the dual pairing between a test function space $ \mathcal A $ and its dual $ {\mathcal A}' $: $ _{{\mathcal A}'}\langle \cdot, \overline{\cdot} \rangle _{\mathcal A}=(\cdot,\cdot).$

The notation $f = O (g) $ means that $ |f(x)| \leq C |g(x)| $ for some $C>0$ and $x$ in the intersection of domains for $f$ and $g$.
If $f=O(g)$ and $g=O(f)$, then we write $f\asymp g$.

The Fourier transform of $f \in L^1 (\mathbb{R}^d)$ given by
\begin{equation*}
\widehat{f} (\xi) := \int_{\mathbb{R}^d} f(x) e^{-2\pi i x \cdot \xi} dx, \quad \xi \in \mathbb{R}^d,
\end{equation*}
extends to $ L^2 (\mathbb{R}^d)$ by standard approximation procedure.


The convolution between $f,g \in  L^1 (\mathbb{R}^d)$ is given by
$(f*g) (t) = \int f(x) g(t-x) dx$.

Translation, modulation, and dilation  operators, $T$, $M$, and $D$ respectively, when acting on $ f \in L^2 (\mathbb{R}^d)$ are defined by
\begin{equation*} 
T_x f(\cdot) = f(\cdot - x) \quad \text{and} \quad
M_x f(\cdot) = e^{2\pi i x \cdot} f(\cdot), \quad
D_a  f(\cdot) = \frac{1}{a} f(\frac{\cdot}{a}),
\end{equation*}
$ x \in\mathbb{R}^d$, $ a >0$.
Then for $f,g \in L^2 (\mathbb{R}^d)$ the following relations hold:
\begin{equation*}
M_y T_x  = e^{2\pi i x \cdot y } T_x M_y, \quad
\widehat{(T_x f)} = M_{-x} \widehat f, \quad
\widehat{(M_x f)} = T_{x} \widehat f, \quad
x,y \in \mathbb{R}^d.
\end{equation*}

The Fourier transform, convolution, $T$, $M$, and $D$ are extended to other spaces of functions and distributions in a natural way.

\section{Preliminaries} \label{sec1}

\subsection{Defining sequences via Komatsu}

Komatsu's approach to the theory of ultradistributions (see \cite{Komatsuultra1}) is based on sequences of positive numbers  $(M_p) = (M_p)_{p\in \mathbb N_0}$, $ M_0 = 1$, which satisfy some of the following conditions: \newline
$(M.1) $ (logarithmic convexity)
\begin{equation*} \label{log-conv}
 M_p ^2 \leq M_{p-1}M_{p+1}, \qquad p\in \mathbb N;
\end{equation*}
$(M.2) $ (stability under the action of ultradifferentiable operators / convolution)
\begin{equation*} \label{ultradiff-stability}
(\exists A,B > 0) \quad M_{p+q}\leq AB^{p+q} M_p M_q,  \qquad p,q\in \mathbb N_0;
\end{equation*}
$(M.2)' $ (stability under the action of differentiable operators)
\begin{equation*} \label{ultradiff-stability2}
(\exists A,B > 0) \quad M_{p+1}\leq AB^{p} M_p,  \qquad p \in \mathbb N_0;
\end{equation*}
$(M.3)$ (strong  non-quasi-analyticity)
\begin{equation*} \label{strong-non-qa}
\sum\limits_{q=p+1} ^{\infty} \frac{M_{q-1}}{M_q}
\leq A\, p\, \frac{M_{p}}{M_{p+1}},
\qquad p \in \mathbb N;
\end{equation*}
$(M.3)'$  (non-quasi-analyticity)
\begin{equation*} \label{non-qa}
\sum\limits_{p=1}^{\infty}\frac{M_{p-1}}{M_p}<\infty.
\end{equation*}

Note that $(M.2) \Rightarrow (M.2)' $, and   $(M.3) \Rightarrow (M.3)'$.
In addition, $ (M.1) $ implies $ M_p M_q \leq   M_{p+q}$, $p,q\in \mathbb N_0$.

\par

Let $(M_p)$ be a positive monotone increasing sequence that satisfies $(M.1)$. Then
$ ( M_p /p! )^{1/p} ,$ $p\in\mathbb N$ is an almost increasing sequence if there exists $C>0$ such that
\begin{equation*}
\label{UsloviInveseClosed}
\left ( \frac{M_p}{p!}\right )^{1/p} \leq C \left ( \frac{M_q}{q!}\right )^{1/q}, \quad  p \leq q,\quad \text{and}\quad \lim_{p\to\infty} M_p^{1/p}=\infty.
\end{equation*}
This property is related to inverse closedness in  $C^{\infty}(\mathbb{R}^d)$,
see \cite{Siddiqi}.

The Gevrey sequence $M_p = p! ^s$, $p\in \mathbb N_0,$ $ s>1$ satisfies $  (M.1)$, $(M.2)$, and
$(M.3) $. It is also an almost increasing sequence.

\par

If $(M_p )$ and $(N_p )$ satisfy $(M.1)$, then we write $ M_p \subset N_p$ if there exist constants $A>0$ and $B>0$ (independent on $p$) such that
\begin{equation} \label{MpNp}
M_p \leq  A B^p N_p, \quad p\in \mathbb N.
\end{equation}
If, instead, for each $B>0$ there exists $A>0$ such that \eqref{MpNp} holds, then we write
\begin{equation*} \label{MpmanjeodNp}
M_p \prec N_p.
\end{equation*}

Assume that $(M_p )$ satisfies $ (M.1) $ and $ (M.3)' $. Then $ p! \prec M_p$.

\par

Let $\mathcal{R}$ denote the set of all sequences of positive numbers monotonically increasing to infinity. For a given sequence $(M_p )$ and $(r_p) \in \mathcal{R}$ we  consider
\begin{equation*} \label{Np}
N_0 = 1, \quad N_p = M_p r_1 r_2 \dots r_p = M_p \prod_{j=1} ^p r_j, \quad p\in  \mathbb N.
\end{equation*}
It is easy to see that if $(M_p )$ satisfies $ (M.1) $ and $ (M.3)' $, then $(N_p)$
satisfies $ (M.1) $ and $ (M.3)' $ as well. In addition, one can find  $(\tilde{r}_p) \in \mathcal{R}$
so that $ (M_p \prod_{j=1} ^p \tilde{r}_j)$ satisfies  $ (M.2) $ if $(M_p )$ does.
This follows from the next lemma.

\begin{Lemma} \label{comparison-sequences}
Let $(r_p) \in \mathcal{R}$ be given. Then there exists $(\tilde{r}_p) \in \mathcal{R}$ such that
$\tilde{r}_p \leq r_p$, $ p\in  \mathbb N $,
and
\begin{equation} \label{noviniz}
\prod_{j=1} ^{p+q} \tilde{r}_j \leq 2^{p+q}
\prod_{j=1} ^{p}  \tilde{r}_j \prod_{j=1} ^{q}  \tilde{r}_j, \qquad p,q\in \mathbb N.
\end{equation}
\end{Lemma}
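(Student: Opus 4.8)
The plan is to produce $(\tilde r_p)$ by an explicit ``concavification from below'' of $(r_p)$. Concretely, I would set
\[
c_p := \min_{1\le i\le p}\frac{r_i}{i}, \qquad \tilde r_p := p\,c_p = p\min_{1\le i\le p}\frac{r_i}{i}, \quad p\in\mathbb N.
\]
Two properties are then immediate. First, taking $i=p$ in the minimum gives $\tilde r_p\le p\cdot (r_p/p)=r_p$. Second, since $c_p$ is a minimum over the expanding index set $\{1,\dots,p\}$, the sequence $(c_p)$ is non-increasing; equivalently, $\tilde r_p/p$ is non-increasing in $p$. This last monotonicity is the structural feature that drives the whole estimate, because it yields $\tilde r_{m}/\tilde r_{i}\le m/i$ whenever $m\ge i$.

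The main obstacle is to check that $(\tilde r_p)$ actually belongs to $\mathcal R$, i.e. that it is non-decreasing and tends to infinity; here the hypothesis that $(r_p)$ is itself monotone and unbounded is essential. For monotonicity I would compare $\tilde r_{p+1}$ with $\tilde r_p$ using $c_{p+1}=\min\{c_p,\,r_{p+1}/(p+1)\}$: if $c_{p+1}=c_p$ then $\tilde r_{p+1}=(p+1)c_p\ge p c_p=\tilde r_p$, while if $c_{p+1}=r_{p+1}/(p+1)$ then $\tilde r_{p+1}=r_{p+1}\ge r_p\ge \tilde r_p$, the last step using $\tilde r_p\le r_p$ together with monotonicity of $(r_p)$. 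For divergence I would argue by contradiction: if $\tilde r_p\le L$ for all $p$, then a minimizer $i^\ast=i^\ast(p)$ satisfies $r_{i^\ast}\le i^\ast L/p\le L$, so $i^\ast$ stays in the finite set $\{i:r_i\le L\}$; but then $c_p=r_{i^\ast}/i^\ast$ is bounded below by a positive constant, contradicting $c_p\le L/p\to0$. Hence $\tilde r_p\to\infty$ and $(\tilde r_p)\in\mathcal R$.

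With these properties in hand, \eqref{noviniz} follows by a short telescoping computation. The inequality is symmetric in $p$ and $q$, so I may assume $p\le q$. Writing $R_p:=\prod_{j=1}^p\tilde r_j$, the ratio collapses to a product of only $p$ factors,
\[
\frac{R_{p+q}}{R_p R_q}=\prod_{i=1}^{p}\frac{\tilde r_{q+i}}{\tilde r_i}\le\prod_{i=1}^{p}\frac{q+i}{i}=\binom{p+q}{p}\le 2^{p+q},
\]
where the middle inequality is exactly the consequence $\tilde r_{q+i}/\tilde r_i\le (q+i)/i$ of $\tilde r_p/p$ being non-increasing, and the final bound is the standard estimate of a single binomial coefficient by the full binomial sum $(1+1)^{p+q}$. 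This is precisely \eqref{noviniz}. I expect the steps establishing membership in $\mathcal R$ (monotonicity and, especially, divergence of $(\tilde r_p)$) to be the only delicate points; once they are secured, the product estimate is essentially forced and yields the constant $2^{p+q}$ without slack.
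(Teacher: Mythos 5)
Your proof is correct and takes essentially the same approach as the paper: your closed form $\tilde r_p = p\min_{1\le i\le p} r_i/i$ is exactly the unrolled version of the paper's recursive definition $\tilde r_1=r_1$, $\tilde r_{j+1}=\min\bigl\{r_{j+1},\tfrac{j+1}{j}\tilde r_j\bigr\}$, since both satisfy $\tilde r_{p+1}/(p+1)=\min\bigl\{\tilde r_p/p,\; r_{p+1}/(p+1)\bigr\}$. You merely supply in full the verification (membership in $\mathcal R$ and the telescoping/binomial bound $\binom{p+q}{p}\le 2^{p+q}$) that the paper defers to \cite{P-13}.
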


\begin{proof}
It is enough to consider the sequence $(\tilde{r}_p)$ given by $\tilde{r}_1 = r_1$ and inductively
\begin{equation*}
\tilde{r}_{j+1} = \min \left\{ r_{j+1}, \frac{j+1}{j} \tilde{r}_j \right\}, \quad j \in \mathbb N.
\end{equation*}
Then $(\tilde{r}_p) \in \mathcal{R}$ and \eqref{noviniz} holds.
We refer to \cite[Lemma 2.3]{P-13} for details.
\end{proof}

\par

\subsection{Defining sequences for extended Gevrey regularity} \label{subsec:sequences}

To extend the class of Gevrey type ultradifferentiable functions we consider two-parameter sequences
of the form $M_p^{\tau,\sigma} =  p^{\tau p^{\sigma}}$, $p\in\mathbb N,$ $\tau>0$, $\sigma>1$.

>From Stirling's formula, and the fact that there exists $C>0$ (independent of $p$) such that
\begin{equation*}
s p \leq C\tau p^{\sigma},  \quad p \in \mathbb N,
\end{equation*}
for any $s, \sigma>1$, and $ \tau>0$, it follows that $ p!^s  \leq C_1 p^{\tau p^{\sigma}}$, for a suitable constant $C_1 > 0$.

The main properties of $(M_p^{\tau,\sigma})$ are collected in the next lemma (cf. \cite[Lemmas 2.2 and 3.1]{PTT-01}). The proof is given in the Appendix.

\begin{Lemma} \label{osobineM_p_s}
Let $\tau>0$, $\sigma>1$, $M_0 ^{\tau,\sigma}=1$,
and $M_p ^{\tau,\sigma}=p^{\tau p^{\sigma}}$, $p\in \mathbb N$.
Then the following properties hold:
\newline
$(M.1)$ \hspace{1em} $
(M_p^{\tau,\sigma})^2\leq M_{p-1}^{\tau,\sigma}M_{p+1}^{\tau,\sigma},  \quad
p\in \mathbb N, $ \newline
$\widetilde{(M.2)}$  \hspace{1em}   $
M_{p+q}^{\tau,\sigma}\leq C^{p^{\sigma} + q^{\sigma}}
M_p^{\tau 2^{\sigma-1},\sigma}M_q^{\tau 2^{\sigma-1},\sigma},
\quad p,q\in \mathbb N_0,\quad$
for some constant $ C\geq 1$,\newline
$\widetilde{(M.2)'}$ \hspace{1em}
$
M_{p+1}^{\tau,\sigma}\leq C ^{p^{\sigma}} M_p^{\tau,\sigma}, \quad p\in \mathbb N_0,
\quad$
for some constant $C\geq 1$, \newline
$(M.3)'$\hspace{1em}
$ \displaystyle   \sum\limits_{p=1}^{\infty}\frac{M_{p-1}^{\tau,\sigma}}{M_p^{\tau,\sigma}} <\infty.
$
\end{Lemma}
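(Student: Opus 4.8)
The plan is to reduce every statement to elementary estimates for the smooth interpolant $g(x) := \tau x^{\sigma}\log x$, $x\geq 1$, of the logarithmic sequence $\log M_p^{\tau,\sigma} = g(p)$, and then to exploit the convexity of $g$ together with the convexity defect of $t\mapsto t^{\sigma}$. First, for $(M.1)$ I would differentiate twice to obtain $g''(x) = \tau x^{\sigma-2}\bigl(\sigma(\sigma-1)\log x + 2\sigma-1\bigr)$, which is nonnegative for $x\geq 1$ and $\sigma>1$. Convexity of $g$ on $[1,\infty)$ then gives $2g(p)\leq g(p-1)+g(p+1)$ for every $p\geq 2$, i.e. $(M_p^{\tau,\sigma})^2\leq M_{p-1}^{\tau,\sigma}M_{p+1}^{\tau,\sigma}$, while the case $p=1$ is checked directly since $M_0^{\tau,\sigma}=M_1^{\tau,\sigma}=1$.

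The heart of the matter is $\widetilde{(M.2)}$, and here the essential input is the convexity estimate $(p+q)^{\sigma}\leq 2^{\sigma-1}(p^{\sigma}+q^{\sigma})$, which is exactly what forces the dilated parameter $\tau 2^{\sigma-1}$ on the right-hand side. Passing to logarithms, it suffices to dominate $\tau(p+q)^{\sigma}\log(p+q)$ by $(\log C)(p^{\sigma}+q^{\sigma})+\tau 2^{\sigma-1}\bigl(p^{\sigma}\log p+q^{\sigma}\log q\bigr)$. After inserting the convexity estimate and using (by symmetry) $p\geq q\geq 1$ together with $\log(p+q)\leq\log 2+\log p$, the problem collapses to absorbing the single cross term $q^{\sigma}\log(p/q)$; here I would apply $\log(p/q)\leq p/q$ to get $q^{\sigma}\log(p/q)\leq q^{\sigma-1}p\leq p^{\sigma}\leq p^{\sigma}+q^{\sigma}$, so that every excess contribution is proportional to $p^{\sigma}+q^{\sigma}$ and can be loaded into the single constant $C=(2e)^{\tau 2^{\sigma-1}}$. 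The degenerate case $q=0$ is immediate because $2^{\sigma-1}>1$ makes $M_p^{\tau 2^{\sigma-1},\sigma}\geq M_p^{\tau,\sigma}$. I expect this balancing of the cross term, and the correct bookkeeping of the $2^{\sigma-1}$ factor, to be the main obstacle, since a cruder splitting of $\log(p+q)$ produces uncontrollable mixed terms such as $p^{\sigma}q^{\sigma}$.

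For $\widetilde{(M.2)'}$ I would estimate the one-step increment by monotonicity of $g'$, namely $g(p+1)-g(p)\leq g'(p+1)=\tau(p+1)^{\sigma-1}\bigl(\sigma\log(p+1)+1\bigr)$; since this quantity is $O(p^{\sigma-1}\log p)=o(p^{\sigma})$, the ratio $\bigl(g(p+1)-g(p)\bigr)/p^{\sigma}$ stays bounded over $p\geq 1$, and taking $\log C$ to be its finite supremum yields $M_{p+1}^{\tau,\sigma}\leq C^{p^{\sigma}}M_p^{\tau,\sigma}$. Finally, for $(M.3)'$ I would bound $M_{p-1}^{\tau,\sigma}/M_p^{\tau,\sigma}\leq p^{-\tau\left(p^{\sigma}-(p-1)^{\sigma}\right)}$ and invoke $p^{\sigma}-(p-1)^{\sigma}\geq\sigma(p-1)^{\sigma-1}$. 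Because $\sigma>1$, the exponent tends to infinity, so for $p$ large the terms are dominated by $p^{-2}$; summability of the tail, together with finiteness of the initial terms, gives the convergence of $\sum_p M_{p-1}^{\tau,\sigma}/M_p^{\tau,\sigma}$.
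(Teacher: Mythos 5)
Your proposal is correct, and for three of the four properties it runs along essentially the same lines as the paper's Appendix proof: $(M.1)$ via convexity of $g(x)=\tau x^{\sigma}\ln x$ on $[1,\infty)$ (the paper asserts this convexity; you verify it by computing $g''(x)=\tau x^{\sigma-2}\bigl(\sigma(\sigma-1)\ln x+2\sigma-1\bigr)\geq 0$, which is a useful explicit check), $\widetilde{(M.2)}$ via the same pivotal inequality $(p+q)^{\sigma}\leq 2^{\sigma-1}(p^{\sigma}+q^{\sigma})$, and $(M.3)'$ via the gap bound $p^{\sigma}-(p-1)^{\sigma}\geq \sigma(p-1)^{\sigma-1}$ — the paper reaches $\sum_p (2p)^{-\tau(p-1)^{\sigma-1}}$ through $(1+1/p)^p\geq 2$ and $p^{\sigma}\geq(p-1)^{\sigma}+(p-1)^{\sigma-1}$, while you reach the same conclusion by the mean value theorem and eventual domination by $p^{-2}$; these are the same idea in different clothing. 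Within $\widetilde{(M.2)}$ your bookkeeping also differs only cosmetically: you assume $p\geq q$ and absorb the single cross term $q^{\sigma}\log(p/q)\leq pq^{\sigma-1}\leq p^{\sigma}$, whereas the paper keeps the symmetric splitting $(p+q)^{\tau(p+q)^{\sigma}}\leq(p+q)^{\tau 2^{\sigma-1}p^{\sigma}}(p+q)^{\tau 2^{\sigma-1}q^{\sigma}}$ and uses $\ln(1+q/p)\leq q/p$ — the identical elementary estimate, with comparable constants ($(2e)^{\tau 2^{\sigma-1}}$ versus $e^{\tau 2^{\sigma}}$). The one genuinely different step is $\widetilde{(M.2)'}$: the paper writes $\sigma=n+\delta$ and binomially expands $(p+1)^{\sigma}\leq p^{\sigma}+2^{n+1}p^{\sigma-\delta}$, with a case distinction on whether $\sigma\in\mathbb N$, while you bound the increment $g(p+1)-g(p)\leq g'(p+1)=\tau(p+1)^{\sigma-1}\bigl(\sigma\ln(p+1)+1\bigr)=O(p^{\sigma-1}\ln p)=o(p^{\sigma})$ using monotonicity of $g'$; your route is shorter and avoids the integer/fractional case analysis, at the modest cost of leaving the constant as a finite supremum rather than an explicit bound. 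You also handle the degenerate cases correctly ($p=1$ in $(M.1)$ since $M_0^{\tau,\sigma}=M_1^{\tau,\sigma}=1$; $q=0$ in $\widetilde{(M.2)}$ via $2^{\sigma-1}>1$, which is exactly why $C\geq 1$ is needed there), so the argument is complete as stated.
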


\begin{Remark} From the proof of $\widetilde{(M.2)'}$ it follows  that $(M_p^{\tau ,\sigma})$ does not satisfy $(M.2)'$, and therefore $(M.2)$ as well. One might expect that instead the sequence $(M_p^{\tau ,\sigma})$ satisfies
\begin{equation}
\label{naturalext2}
M_{p+q}^{\tau,\sigma}\leq C^{p^{\sigma}+q^{\sigma}}M_p^{\tau ,\sigma}M_q^{{\tau},\sigma},\quad p,q\in \mathbb N_0,
\end{equation}
for some constant $ C>0$. However, if we assume that \eqref{naturalext2} holds for e.g. $\tau=1$, then, for $p=q\not=0$, we obtain
\begin{equation*}
\label{formula} p^{(2p)^{\sigma}}\leq (C_1p)^{{2p}^{\sigma}},\quad p\in
\mathbb N, \quad \text{with} \quad C_1= C / 2^{2^{\sigma-1}}
\end{equation*}
which gives
\begin{equation*}
p^{2^{\sigma-1} p} \leq C_1,  \quad \text{ for all  } \quad p\in \mathbb N,
\end{equation*}
a contradiction.  Thus, $\widetilde{(M.2)}$ is a suitable alternative to $(M.2)$ when considering $(M_p^{\tau ,\sigma})$.
\end{Remark}

\par

Let $M_p^{\tau,\sigma} =  p^{\tau p^{\sigma}}$, $p\in\mathbb N,$ $\tau>0$, $\sigma>1$, and $(r_p) \in \mathcal{R}$. If $(\tilde{r}_p) \in \mathcal{R}$ is chosen as in Lemma \ref{comparison-sequences}, then the sequence $(N_p)$ given by
\begin{equation*}
N_0 = 1, \quad N_p =M_p^{\tau,\sigma}  \prod_{j=1} ^p \tilde{r}_j, \quad p\in  \mathbb N,
\end{equation*}
satisfies $ (M.1) $, $\widetilde{(M.2)}$, $\widetilde{(M.2)'}$, and $ (M.3)' $.

\par

We note that if $M_p^{\tau,\sigma}=p^{\tau p^{\sigma}}$, $p\in\mathbb N,$ $\tau>0$, $\sigma>1$,
then the sequence $\Big(\frac{M_p ^{\tau,\sigma}}{p^p}\Big)^{1/p},$ $p\in\mathbb N,$ is an almost increasing sequence since
$\Big(\frac{M_p ^{\tau,\sigma}}{p^p}\Big)^{1/p}=p^{\tau p^{\sigma-1}-1}$
and $ p^{\tau p^{\sigma-1}-1}<q^{\tau q^{\sigma-1}-1},$ $ \lceil (1/\tau)^{1/(\sigma-1)}\rceil < p <q.$

\subsection{The Lambert function}
\label{subsec:lambert}

The \emph{Lambert $W$ function} is defined as the inverse of $z e^{z}$, $z\in {\mathbb C}$. By $W(x)$, we denote the restriction of its principal branch to $[0,\infty)$. It is used as a convenient tool to describe asymptotic behavior in different contexts. We refer to \cite{LambF} for a review of some applications of the Lambert $W$
function in pure and applied mathematics, and to the recent monograph \cite{Mezo} for more details and generalizations.
It is noteworthy that the Lambert function describes the precise asymptotic behavior of associated function to the sequence $(M_p^{\tau,\sigma}) $. This fact was firstly observed in \cite{PTT-03}.

Some basic properties of the Lambert function $W$ are given below:
\newline
$(W1)$ \hspace{1em} $W(0)=0$, $W(e)=1$, $W(x)$ is continuous, increasing and concave on $[0,\infty)$,
\newline
$(W2)$ \hspace{1em} $W(x e^{x})=x$ and $ x=W(x)e^{W(x)}$,  $x\geq 0$,
\newline
$(W3)$ \hspace{1em} $W$ can be represented in the form of the absolutely convergent series
\begin{equation*}
W(x)=\ln x-\ln (\ln x)+\sum_{k=0}^{\infty}\sum_{m=1}^{\infty}c_{km}\frac{(\ln(\ln x))^m}{(\ln x)^{k+m}},\quad x\geq x_0>e,
\end{equation*}
with suitable constants $c_{km}$ and  $x_0 $, wherefrom  the following  estimates hold:
\begin{equation}
\label{sharpestimateLambert}
\ln x -\ln(\ln x)\leq W(x)\leq \ln x-\frac{1}{2}\ln (\ln x), \quad x\geq e.
\end{equation}
The equality in \eqref{sharpestimateLambert} holds if and only if $x=e$.

Note that $(W2)$ implies
\begin{equation*}
\label{PosledicaLambert1}
W(x\ln x)=\ln x,\quad x>1.
\end{equation*}
By using $(W3)$ we obtain
\begin{equation*}
\label{PosledicaLambert1.5}
W(x)\sim \ln x, \quad x\to \infty,
\end{equation*}
and therefore
\begin{equation*}
\label{PosledicaLambert2}
W(C x)\sim W(x),\quad x\to \infty,
\end{equation*}
for any $C>0$.
We refer to \cite{LambF, Mezo} for more details about the Lambert $W$ function.



\subsection{Associated functions}

Let $(M_p)$ be an increasing sequence positive numbers which satisfies $(M.1)$, and $M_0 = 1$.
Then the {\em Carleman associated function } to the sequence $(M_p)$ is defined by
\begin{equation}
\label{associated-Katz}
\mu(h) = \inf_{p\in  \mathbb N} h^{-p} M_p, \quad h>0.
\end{equation}
This function is introduced in the study of quasi-analytic functions, see, e.g. \cite{Katznelson}.
We use the notation from \cite{GelfandShilov}.

In Komatsu's treatise of ultradistributions \cite{Komatsuultra1},
the associated function to $(M_p)$ is instead given by
\begin{equation}
\label{associated-Komatsu}
T(h) = \sup_{p>0} \ln_+ \frac{h^p }{M_p}, \quad h>0.
\end{equation}

\begin{Lemma} \label{lema:katz-komatsu}
Let $(M_p)$ be an increasing sequence positive numbers which satisfies $(M.1)$, and $M_0 = 1$, and let the functions $ \mu $ and $T$ be given by \eqref{associated-Katz} and \eqref{associated-Komatsu}
respectively. Then
\begin{equation}
\label{Katz-Komatsu}
\mu(h) = e^{-T(h)}, \quad h>0.
\end{equation}
\end{Lemma}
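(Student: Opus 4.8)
The plan is to show that the two associated functions are related by $\mu(h) = e^{-T(h)}$ by taking logarithms and recognizing that the infimum defining $\mu$ corresponds, up to sign, to the supremum defining $T$. First I would take the logarithm of the Carleman associated function. Since $\ln$ is increasing and continuous, for $h > 0$ we have
\begin{equation*}
\ln \mu(h) = \ln \Bigl( \inf_{p \in \mathbb N} h^{-p} M_p \Bigr) = \inf_{p \in \mathbb N} \ln\bigl( h^{-p} M_p \bigr) = \inf_{p \in \mathbb N} \bigl( -p \ln h + \ln M_p \bigr).
\end{equation*}
Rewriting, $\ln \mu(h) = -\sup_{p \in \mathbb N} \bigl( p \ln h - \ln M_p \bigr) = -\sup_{p \in \mathbb N} \ln \frac{h^p}{M_p}$. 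The goal is therefore to identify $\sup_{p} \ln \frac{h^p}{M_p}$ with Komatsu's $T(h) = \sup_{p>0} \ln_+ \frac{h^p}{M_p}$.

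The key step is to reconcile the two suprema, which differ in two ways: Komatsu uses $\ln_+$ (the positive part) rather than $\ln$, and the range is $p > 0$ (a continuous or extended index including $p=0$) rather than $p \in \mathbb N$. For the $\ln_+$ versus $\ln$ discrepancy, I would observe that when $p = 0$ the quantity $h^0/M_0 = 1$ gives $\ln 1 = 0$, so including $p = 0$ in the supremum guarantees $\sup_p \ln \frac{h^p}{M_p} \geq 0$; on the nonnegative region the supremum of $\ln$ and of $\ln_+$ coincide. Thus extending the index to include $p=0$ in the Carleman infimum (which is harmless since $h^0 M_0 = 1$ only raises the infimum toward, but the $(M.1)$ convexity and $M_p^{1/p} \to \infty$ ensure the relevant infimum is attained among positive indices) aligns the expression with $\ln_+$. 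I would spell out that $\sup_{p \geq 0} \ln \frac{h^p}{M_p} = \sup_{p \geq 0} \ln_+ \frac{h^p}{M_p} = T(h)$, using that the $p=0$ term contributes exactly $0$.

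The main obstacle I anticipate is the subtle mismatch in the index sets: the Carleman $\mu$ ranges over $p \in \mathbb N$ while Komatsu's $T$ ranges over $p > 0$, and one must argue carefully that switching to $p \in \mathbb N_0$ does not change the value of the infimum in the regime where $\mu(h) \leq 1$, and conversely that the $\ln_+$ truncation exactly compensates for the cases where the unrestricted supremum would be negative. This is precisely the case when $h$ is small, where $\mu(h)$ could a priori exceed $1$: here $\ln \mu(h) > 0$ would force $-T(h) > 0$, i.e. $T(h) < 0$, contradicting $T(h) \geq 0$. I would resolve this by noting that $M_0 = 1$ forces $\mu(h) \leq h^0 M_0 = 1$ for all $h$, so $\ln \mu(h) \leq 0$ always, matching $-T(h) \leq 0$; this consistency check confirms that including the index $p=0$ and using $\ln_+$ are two sides of the same normalization. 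Combining these observations yields $\ln \mu(h) = -T(h)$, and exponentiating gives \eqref{Katz-Komatsu}.
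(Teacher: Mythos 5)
Your overall route---take logarithms, convert the infimum into a supremum, and reconcile $\ln$ with $\ln_+$ via the $p=0$ term---is the same duality computation as the paper's proof, so the approaches do not genuinely differ. The problem is the step where you handle the index set. You assert that adjoining $p=0$ to the Carleman infimum is harmless, and that ``$M_0=1$ forces $\mu(h)\leq h^0 M_0=1$ for all $h$.'' Under the paper's stated conventions this is false: in \eqref{associated-Katz} the infimum runs over $p\in\mathbb N$, the \emph{positive} integers, so the term $h^{-0}M_0=1$ is not available, and $\mu$ is not bounded by $1$. Indeed, since $(M_p)$ is increasing, for $0<h<1$ one has $h^{-p}M_p\geq h^{-1}M_1$ for all $p\geq 1$, hence $\mu(h)\geq M_1/h\to\infty$ as $h\to 0^+$, while $e^{-T(h)}\leq 1$ always because $T\geq 0$. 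So adjoining $p=0$ genuinely changes the left-hand side for small $h$: what your computation actually establishes is $e^{-T(h)}=\min\{1,\mu(h)\}$, which agrees with $\mu(h)$ only when $h^p\geq M_p$ for some $p\geq 1$. Your appeal to $(M.1)$ and $M_p^{1/p}\to\infty$ to argue that ``the relevant infimum is attained among positive indices'' does not repair this (and $M_p^{1/p}\to\infty$ is not even a hypothesis of the lemma): for small $h$ every term with $p\geq 1$ exceeds $1$, so the minimum over $\mathbb N_0$ sits exactly at $p=0$.

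In fairness, this defect is inherited from the statement itself, and the paper's own proof glosses the identical point: its step $\ln_+(h^p M_p^{-1})=-\ln_+(h^{-p}M_p)$ is false except when $h^p=M_p$ (the left side is $\geq 0$, the right side $\leq 0$), and its conclusion $T(h)=-\ln_+\mu(h)$ is not literally consistent with $T\geq 0$. The lemma is evidently meant to be read either with $p\in\mathbb N_0$ in \eqref{associated-Katz}---the hypothesis $M_0=1$, and the fact that the later definition \eqref{asocirana} takes the supremum over $\mathbb N_0$, both point to this intention, in which case your argument is complete and in fact more careful than the paper's---or as the identity $e^{-T(h)}=\min\{1,\mu(h)\}$, valid for all $h>0$. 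You correctly isolated the crux ($\ln_+$ versus $\ln$, and the role of $p=0$), but the specific justification you give for crossing it is wrong as stated; fix it by explicitly adopting the $\mathbb N_0$ convention in \eqref{associated-Katz}, or by restricting the claimed equality to those $h$ with $\mu(h)\leq 1$.
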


\begin{proof}
Clearly,
\begin{eqnarray*}
 T(h) =\sup_{p>0} \ln_+ (h^p M_p ^{-1}) &=&
 \sup_{p>0} (- \ln_+ (h^{-p} M_p) ) \\
& = &
 -\inf _{p>0} ( \ln_+ (h^{-p} M_p) ) =
 - \ln_+ (\inf _{p>0}  (h^{-p} M_p) ) \\
& = &  - \ln_+ \mu(h), \quad h>0,
 \end{eqnarray*}
which is  \eqref{Katz-Komatsu}.
\end{proof}

When $(M_p)$ is (equivalent to) the Gevrey sequence, $M_p = p^{sp}$, $p\in\mathbb N,$ $ s >1$,
an explicit calculation gives
\begin{equation*}
T (h) =  \frac{s}{e} h^{\frac{1}{s}}, \quad h>0.
\end{equation*}
Thus \eqref{Katz-Komatsu} implies that there exist constants $k>0$, and $C>0$ such that
\begin{equation*}
e^{-k h^{\frac{1}{s}}} \leq \mu(h) \leq C e^{-k h^{\frac{1}{s}}},  \quad h>0,
\end{equation*}
see also \cite[Ch IV, 2.1]{GelfandShilov}.

%
%

\par

By using \eqref{associated-Komatsu} we define the associated function to the sequence ${M_p^{\tau,\sigma}} = p^{\tau p^{\sigma}}$, $p\in\mathbb N,$ $\tau>0$, $\sigma>1$, as follows:
\begin{equation}\label{asocirana}
T_{\tau,\sigma}(h)=\sup_{p\in \mathbb N_0}\ln_+\frac{h^{p}}{M_p^{\tau,\sigma}},\quad \;\; h>0.
\end{equation}

It is a remarkable fact that $T_{\tau,\sigma}(h)$ can be expressed via the Lambert W function.

\begin{Theorem}
\label{TeoremaAsocirana}
Let $\tau>0$, $\sigma>1$,  $M^{\tau,\sigma}_p=p^{\tau p^\sigma}$, $ p \in \mathbb N$, and let $ T_{\tau,\sigma}(h)$ be given by \eqref{asocirana}.
Then
\begin{equation}
\label{AsociranaSigma}
 T_{\tau,\sigma}(h)\asymp \tau^{-\frac{1}{\sigma-1}}
 \frac{ \ln  ^{\frac{\sigma}{\sigma-1}}(h) }{ W^{\frac{1}{\sigma-1}}  (\ln(h))},
 \quad
 h \; \text{large enough},
\end{equation}
where the hidden constants depend on $\sigma$ only.
\end{Theorem}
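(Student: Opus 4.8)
The plan is to analyze the supremum defining $T_{\tau,\sigma}(h)$ by treating the exponent $p$ as a continuous variable and locating the maximizer. Writing $\ln_+ \frac{h^p}{M_p^{\tau,\sigma}} = p \ln h - \tau p^\sigma \ln p$ (ignoring the $\ln_+$ positive-part truncation, which only affects small $h$ and hence the asymptotics not at all), I would define $\varphi(p) = p\ln h - \tau p^\sigma \ln p$ and seek its critical point by setting $\varphi'(p) = 0$. Differentiating gives $\ln h = \tau\bigl(\sigma p^{\sigma-1}\ln p + p^{\sigma-1}\bigr) = \tau p^{\sigma-1}(\sigma \ln p + 1)$. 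For large $h$ the maximizer $p_0$ tends to infinity, so the dominant balance is $\ln h \asymp \tau \sigma\, p_0^{\sigma-1} \ln p_0$, and the main task is to solve this transcendental relation asymptotically for $p_0$ in terms of $h$.

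The key step is to invert the relation $\ln h \asymp \tau\sigma\, p_0^{\sigma-1}\ln p_0$ using the Lambert function. Setting $x = p_0^{\sigma-1}$ so that $\ln p_0 = \tfrac{1}{\sigma-1}\ln x$, the balance becomes $\ln h \asymp \tfrac{\tau\sigma}{\sigma-1}\, x \ln x$, and by the identity $W(x\ln x) = \ln x$ recorded after $(W3)$, this yields $\ln x \asymp W\!\bigl(\tfrac{\sigma-1}{\tau\sigma}\ln h\bigr)$, i.e. $\ln p_0 \asymp \tfrac{1}{\sigma-1} W(\ln h)$ after absorbing the constant via $W(Ct)\sim W(t)$. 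Consequently $p_0^{\sigma-1} \asymp \tfrac{\ln h}{\tau\sigma \ln p_0} \asymp \tfrac{\ln h}{\tau\, W(\ln h)}$, which pins down $p_0 \asymp \bigl(\tfrac{\ln h}{\tau W(\ln h)}\bigr)^{1/(\sigma-1)}$.

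It remains to substitute $p_0$ back into $\varphi$. The leading term is $\varphi(p_0) = p_0\ln h - \tau p_0^\sigma \ln p_0$; using the critical relation $\ln h \asymp \tau\sigma p_0^{\sigma-1}\ln p_0$ one sees $\tau p_0^\sigma \ln p_0 \asymp \tfrac{1}{\sigma} p_0 \ln h$, so both terms are comparable to $p_0 \ln h$ and $\varphi(p_0) \asymp p_0 \ln h$. Plugging in the expression for $p_0$ gives
\begin{equation*}
\varphi(p_0) \asymp \Bigl(\frac{\ln h}{\tau\, W(\ln h)}\Bigr)^{\frac{1}{\sigma-1}} \ln h
= \tau^{-\frac{1}{\sigma-1}}\,\frac{\ln^{\frac{\sigma}{\sigma-1}}(h)}{W^{\frac{1}{\sigma-1}}(\ln h)},
\end{equation*}
which is exactly \eqref{AsociranaSigma}. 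The $\sigma$-dependence of the hidden constants enters through the factor $\sigma$ in the critical equation and the constant absorbed into $W$.

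The main obstacle I anticipate is rigorously controlling the passage from the continuous maximizer to the genuine discrete supremum over $p\in\mathbb N_0$, together with making the asymptotic inversion of the Lambert relation uniform enough to produce honest two-sided bounds ($\asymp$) rather than merely a leading-order heuristic. Concretely, I would need upper and lower estimates: the upper bound follows by plugging the near-optimal integer $\lfloor p_0\rfloor$ (or $\lceil p_0\rceil$) into $\varphi$ and checking the resulting error is absorbed by the constants, while the lower bound requires bounding $\varphi(p)$ for all $p$ away from $p_0$, for which the sharp estimates \eqref{sharpestimateLambert} on $W$ are the essential tool. Carefully tracking that the correction terms in $W$'s expansion $(W3)$ stay subordinate is where the technical care concentrates.
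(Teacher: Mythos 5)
Your proposal is correct and takes essentially the same route as the proof the paper relies on: the paper's own ``proof'' merely cites \cite[Proposition 2]{TT0}, whose argument is precisely this Lambert-inversion of the critical-point equation $\ln h=\tau p^{\sigma-1}(\sigma\ln p+1)$ leading to the two-sided bound \eqref{KonacnaocenaAsocirana}, with the $\tau$-dependence pushed into the ``$h$ large enough'' threshold via $W(Ct)\sim W(t)$ exactly as you do. One small slip in your closing paragraph: plugging the near-optimal integer $\lfloor p_0\rfloor$ into $\varphi$ yields the \emph{lower} bound on the supremum, while the \emph{upper} bound $T_{\tau,\sigma}(h)\leq\varphi(p_0)$ follows from concavity of $\varphi$ on $[1,\infty)$ (so no sweep over all $p$ away from $p_0$ is needed) --- you swapped the two roles, though both tasks are correctly identified and the error terms are indeed of lower order, being $O(\ln h)$ against a main term $\asymp p_0\ln h$.
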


\begin{proof}
The proof follows from \cite[Proposition 2]{TT0} and estimates (30) given in its proof.
More precisely, it can be shown  that
\begin{equation}
\label{KonacnaocenaAsocirana}
B_\sigma\,\tau^{-\frac{1}{\sigma-1}}\frac{ \ln^{\frac{\sigma}{\sigma-1}}(h) }{ W^{\frac{1}{\sigma-1}}  (\ln(h))}+\widetilde{B}_{\tau,\sigma}
\leq T_{\tau,\sigma}(h)
\leq A_{\sigma}\,\tau^{-\frac{1}{\sigma-1}}\frac{ \ln^{\frac{\sigma}{\sigma-1}}(h) }{ W^{\frac{1}{\sigma-1}}  (\ln(h))}+\widetilde{A}_{\tau,\sigma},\,\,
\end{equation}
for large enough $h>0$, and suitable constants $A_{\sigma}, B_{\sigma}, \widetilde{A}_{\tau,\sigma},\widetilde{B}_{\tau,\sigma}>0$.
\end{proof}

Since $T_{\tau,\sigma}(h)$, $h>0$, is an increasing function, \eqref{AsociranaSigma} implies that there exists $A>0$ such that
\begin{equation*}
\label{AlmostIncreasing}
\displaystyle
 \frac{ \ln  ^{\frac{\sigma}{\sigma-1}}(h) }{ W^{\frac{1}{\sigma-1}}  (\ln(h))}
 \leq A\,  \frac{ \ln  ^{\frac{\sigma}{\sigma-1}}(h+a) }{ W^{\frac{1}{\sigma-1}}  (\ln(h+a))},
 \quad a>0, \quad  h \; \text{large enough}.
\end{equation*}

We also notice that $(W3)$ (from subsection \ref{subsec:lambert})  implies
\begin{equation*}
\label{OcenaAsocirana}
T_{\tau,\sigma} (h)\asymp \left ( \frac{\ln^{\sigma }(h) }{\tau \ln (\ln (h))} \right )^{\frac{1}{\sigma -1} },
\quad \text{for} \quad  h \; \text{large enough}.
\end{equation*}

\par

\subsection{Associated function as a weight function}

The approach to ultradifferentiable functions via defining sequences is equivalent to the Braun-Meise-Taylor approach based on weight functions, when the defining sequences satisfy conditions $(M.1),$ $(M.2)$ and $(M.3),$ see \cite{Bonet,MT}. Since $M^{\tau,\sigma}_p=p^{\tau p^\sigma},$ $p\in\mathbb N,$ does not satisfy $(M.2),$ to compare the two approaches in \cite{TT0}, the authors used the technique of weighted matrices, see \cite{RS}. One of the main results from \cite{TT0} can be stated as follows.

\par

\begin{Proposition} \label{Tisaweight}
Let $\tau>0$, $\sigma>1$,  $M^{\tau,\sigma}_p=p^{\tau p^\sigma},$ $ p \in\mathbb N$, and let $ T_{\tau,\sigma}(h)$ be the associated function to the sequence $({M_p^{\tau,\sigma}})$. Then $ T_{\tau,\sigma}(h) \asymp \omega (h)$, where
$\omega $ is a weight function.
\end{Proposition}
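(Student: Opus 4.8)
The plan is to exhibit an explicit weight function $\omega$ and to verify that it is asymptotically equivalent to $T_{\tau,\sigma}$. Recall that, in the Braun--Meise--Taylor sense (cf. \cite{Bonet,MT}), a \emph{weight function} is a continuous increasing $\omega\colon[0,\infty)\to[0,\infty)$ satisfying $(\alpha)$ $\omega(2t)\le L(\omega(t)+1)$ for some $L\ge 1$; $(\beta)$ $\int_1^\infty \omega(t)\,t^{-2}\,dt<\infty$; $(\gamma)$ $\ln t=o(\omega(t))$ as $t\to\infty$; and $(\delta)$ $t\mapsto\omega(e^t)$ is convex. First I would take as candidate the function dictated by the Remark following Theorem \ref{TeoremaAsocirana}, namely a continuous increasing $\omega$ with
\begin{equation*}
\omega(h)=\Big(\frac{\ln^{\sigma}(h)}{\tau\,\ln(\ln(h))}\Big)^{\frac{1}{\sigma-1}},\qquad h\ \text{large},
\end{equation*}
extended by a constant near the origin so as to be globally continuous and increasing. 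By Theorem \ref{TeoremaAsocirana} and the subsequent Remark we obtain $T_{\tau,\sigma}(h)\asymp\omega(h)$ for $h$ large, which is the range relevant for the equivalence of weight functions.

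It then remains to check $(\alpha)$--$(\delta)$. Conditions $(\alpha)$ and $(\gamma)$ are governed directly by the Lambert asymptotics. For $(\alpha)$ I would use $\ln(2h)=\ln h+\ln 2\asymp\ln h$ together with the consequence $W(Cx)\sim W(x)$ of $(W2)$, which gives $\omega(2h)\asymp\omega(h)$ and hence $(\alpha)$. For $(\gamma)$, the exponent $\sigma/(\sigma-1)>1$ makes $\omega(h)$ grow like a power of $\ln h$ divided by a slowly varying factor, so $\ln h/\omega(h)\to 0$. For $(\beta)$ I would substitute $u=\ln t$, turning $\int \omega(t)\,t^{-2}\,dt$ into $\int_{1}^{\infty} u^{\frac{\sigma}{\sigma-1}}(\ln u)^{-\frac{1}{\sigma-1}}e^{-u}\,du$, which converges because the exponential decay dominates; alternatively $(\beta)$ follows from the non-quasi-analyticity $(M.3)'$ of $(M_p^{\tau,\sigma})$ recorded in Lemma \ref{osobineM_p_s} via the standard correspondence between $(M.3)'$ and $(\beta)$.

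The one delicate point is the convexity condition $(\delta)$, since $\asymp$ does not preserve convexity and a direct second-derivative computation for $t\mapsto\omega(e^{t})=\big(t^{\sigma}/(\tau\ln t)\big)^{1/(\sigma-1)}$ is unpleasant. The clean way around this, which I expect to be the crux, is to read off convexity from $T_{\tau,\sigma}$ itself rather than from the explicit formula: writing $h=e^{t}$ in \eqref{asocirana} gives
\begin{equation*}
T_{\tau,\sigma}(e^{t})=\sup_{p\in\mathbb N_0}\max\big(pt-\ln M_p^{\tau,\sigma},\,0\big),
\end{equation*}
a supremum of affine functions of $t$, hence convex on $\mathbb R$; it is also increasing and vanishes for $h\le 1$, so it is continuous and normalised at the origin. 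Thus one may simply take $\omega$ to be $T_{\tau,\sigma}$, for which $(\delta)$ is automatic and $T_{\tau,\sigma}\asymp\omega$ is trivial, and the whole argument reduces to verifying $(\alpha)$, $(\beta)$, $(\gamma)$ for $T_{\tau,\sigma}$ by means of the Lambert estimates above. I would present this second route as the main argument and keep the explicit formula only as the source of the growth estimates.
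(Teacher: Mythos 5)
Your argument is correct, and it is genuinely more self-contained than what the paper does: the paper offers no in-text proof at all for Proposition \ref{Tisaweight}, deferring entirely to \cite{TT0}, where the statement is established in the course of comparing the extended Gevrey classes with weight-matrix classes. Your key move --- observing that convexity of $\varphi(t)=\omega(e^t)$ is not stable under $\asymp$, and therefore taking $\omega$ to be (the evenization of) $T_{\tau,\sigma}$ itself, for which $T_{\tau,\sigma}(e^{t})=\sup_{p\in\mathbb N_0}\max\bigl(pt-\tau p^{\sigma}\ln p,\,0\bigr)$ is a supremum of affine functions and hence convex --- is exactly the right way to dispose of $(\delta)$; this is the classical Komatsu observation that the associated function is always logarithmically convex, and it reduces the whole proposition to checking $(\alpha)$, $(\beta)$, $(\gamma)$, which are all stable under $\asymp$ and follow from \eqref{KonacnaocenaAsocirana} together with $W(x)\sim\ln x$, just as you say. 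Two small points of bookkeeping: the paper's condition $(\beta)$ is $\omega(t)=O(t)$ rather than the integral condition $\int_1^\infty \omega(t)t^{-2}\,dt<\infty$ you quote from the Braun--Meise--Taylor literature, but since $T_{\tau,\sigma}(h)\asymp(\ln h)^{\frac{\sigma}{\sigma-1}}W^{-\frac{1}{\sigma-1}}(\ln h)$ grows polylogarithmically, both versions hold and your substitution $u=\ln t$ verifies the stronger one; and the relation $W(Cx)\sim W(x)$ is derived in the paper from $(W3)$ rather than $(W2)$, a harmless slip. You should also record explicitly that $T_{\tau,\sigma}$ is finite for all $h$ (since $\tau p^{\sigma}\ln p$ grows superlinearly in $p$), vanishes for $h\le 1$ (as $M_p^{\tau,\sigma}\ge 1$), and is nondecreasing, so that continuity on $[0,\infty)$ and the normalisation $\omega(0)=0$ follow from convexity and finiteness of $T_{\tau,\sigma}(e^t)$ on $\mathbb R$; with these remarks your proof is complete and, unlike the paper's citation, entirely internal to the tools already developed in Section \ref{sec1}.
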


Recall, a \emph{weight function} is non-negative, continuous, even and increasing function defined on $\mathbb R_+ \cup \{ 0 \}$, $\omega(0)=0$,  if the following conditions hold:
\newline
($\alpha$) \hspace{1em} $\displaystyle \omega(2t)=O(\omega(t)),\quad t\to \infty,$
\newline
($\beta$) \hspace{1em} $\displaystyle \omega(t)=O(t),\quad t\to\infty,$
\newline
($\gamma$) \hspace{1em} $\displaystyle \ln t = o(\omega(t)), \quad t \to \infty,\;\;$ i.e.
$\displaystyle \lim_{t \to \infty} \frac{\ln t}{\omega(t)} = 0$,
\newline
($\delta$) \hspace{1em} $\displaystyle \varphi(t)=w(e^t)\quad$ is convex.


Some classical examples of weight functions are
\begin{equation*}
\label{BMTexamples}
\omega (t) = \ln^{s}_+ |t|,\quad\quad \omega (t)=\frac{|t|}{\ln^{s-1} (e+|t|)},\quad s>1,\, t\in \mathbb R,
\end{equation*}
where $\ln_+ x=\max\{0,\ln x\}$, $x>0.$ Moreover, $\omega (t)=|t|^s$ is a weight function if and only if $0<s\leq 1$.


We refer to \cite{RS} for the weighted matrices approach to ultradifferentiable functions.It is introduced in order to treat both
 Braun-Meise-Taylor and Komatsu methods in a unified way, see also subsection
 \ref{matrices-approach}.


\section{Extended Gevrey regularity}
\label{SectionSpaces}

\subsection{Extended Gevrey classes and their dual spaces}

Recall that the Gevrey space ${\mathcal G}_t ( \mathbb{R}^d)$, $ t >1$, consists of functions $  \phi \in C^\infty ( \mathbb{R}^d)$ such that for every compact set   $K\subset \subset \mathbb{R}^d$ there are constants
$h>0$ and $C_K > 0 $ satisfying
\begin{equation}\label{Eq:Gevrey-class}
|\partial^{\alpha} \phi (x)| \leq  C_K h^{|\alpha|} |\alpha|!^t,
\end{equation}
for all $x \in K $ and for all $\alpha \in \mathbb N_0^d$.

In a similar fashion we introduce new  classes of smooth functions by using defining sequences $M^{\tau,\sigma}_p=p^{\tau p^\sigma}$, $ p \in \mathbb{N} $, $\tau>0$, $\sigma>1$.

\begin{Definition}
\label{def:extendedGervey}
Let there  be given $\tau>0$, $\sigma>1$, and let $M^{\tau,\sigma}_p=p^{\tau p^\sigma}$, $ p \in \mathbb{N} $, $M^{\tau,\sigma}_0 = 1$.

The extended Gevrey class of Roumieu type ${\mathcal E}_{\{\tau, \sigma\}}( \mathbb{R}^d)$ is the set of all
$\phi \in  C^{\infty}( \mathbb{R}^d)$ such that for every compact set   $K\subset \subset \mathbb{R}^d$ there are constants $h > 0 $ and $C_K >0$ satisfying
\begin{equation} \label{NewClassesInd}
|\partial^{\alpha} \phi (x)| \leq C_K h^{|\alpha|^{\sigma}}  M_{|\alpha|} ^{\tau,\sigma},
\end{equation}
for all $x \in K $ and for all $ \alpha \in  \mathbb{N}_0 ^d.$

The extended Gevrey class of Beurling type ${\mathcal E}_{(\tau, \sigma)}( \mathbb{R}^d)$ is the set of all
$\phi \in  C^{\infty}( \mathbb{R}^d)$ such that
for every compact set   $K\subset \subset \mathbb{R}^d$ and for all $h>0$ there is a constant $C_{K,h} > 0 $  satisfying
\begin{equation} \label{NewClassesProj}
|\partial^{\alpha} \phi (x)| \leq C_{K,h} h^{|\alpha|^{\sigma}}  M_{|\alpha|} ^{\tau,\sigma},
\end{equation}
for all $x \in K $ and for all $ \alpha \in  \mathbb{N}_0^d.$
\end{Definition}

The spaces  ${\mathcal E}_{\{\tau, \sigma\}}( \mathbb{R}^d)$ and ${\mathcal E}_{(\tau, \sigma)}( \mathbb{R}^d)$ are in a usual way endowed with projective and inductive limit topologies respectively, we refer to \cite{PTT-01} for details. In particular, they are nuclear spaces, see
\cite[Theorem 3.1]{PTT-01}.

Note that \eqref{Eq:Gevrey-class},  \eqref{NewClassesInd}  and \eqref{NewClassesProj} imply
\begin{equation*}
    \cup_{\tau > 1} {\mathcal G}_t ( \mathbb{R}^d)  \hookrightarrow
{\mathcal E}_{(\tau, \sigma)}( \mathbb{R}^d) \hookrightarrow {\mathcal E}_{\{\tau, \sigma\}}( \mathbb{R}^d),
\end{equation*}
where $\hookrightarrow$ denotes continuous and dense inclusion.

\par

The set of functions $ \phi \in {\mathcal E}_{\{\tau, \sigma\}}( \mathbb{R}^d)$
($ \phi \in {\mathcal E}_{(\tau, \sigma)}( \mathbb{R}^d)$) whose support is contained in some compact set is denoted by ${\mathcal D}_{\{\tau, \sigma\}} ( \mathbb{R}^d)$
( ${\mathcal D}_{(\tau, \sigma)} ( \mathbb{R}^d)$ ).
\par

We use the abbreviated notation $ \tau,\sigma $ for $\{\tau,\sigma\}$ or $(\tau,\sigma)$ to denote ${\mathcal E}_{\tau, \sigma}( \mathbb{R}^d) = {\mathcal E}_{\{\tau,\sigma\}}( \mathbb{R}^d)$ or ${\mathcal E}_{\tau, \sigma}( \mathbb{R}^d)  = {\mathcal E}_{(\tau, \sigma)}( \mathbb{R}^d)$,
and similarly for ${\mathcal D}_{\tau, \sigma} ( \mathbb{R}^d)$.

Next we give an equivalent description of extended Gevrey classes by using sequences from
$\mathcal{R}$, see subsection \ref{subsec:sequences}. We note that such descriptions are important when dealing with integral transforms of ultradifferentiable functions and
related ultradistributions,
cf. \cite{PilipovicKnjiga, Komatsuultra3,P-13}.
The result follows from a lemma which is a modification of  \cite[Lemma 3.4]{Komatsuultra3}, and  \cite[Lemma 2.2.1]{PilipovicKnjiga}.

Put $\lfloor x \rfloor :=\max\{m\in \mathbb N\,:\,m\leq x\} $ (the greatest integer part of  $x\in \mathbb R_+$).

\begin{Lemma} \label{lm:konstante-i-nizovi}
Let there be given $\sigma > 1$,  a sequence of positive numbers $(a_p)$,
$\left(r_j\right) \in  \mathcal{R}$, and put
\begin{equation}
\label{proizvod}
R_{0,\sigma}=1, \quad R_{p,\sigma}:=\prod_{j=1}^{\lfloor p^{\sigma} \rfloor} r_j, \quad p \in \mathbb{N}.
\end{equation}
Then the following is true.
\label{LemaProizvodNizova}
\begin{itemize}
\item[i)] There exists $h>0$ such that
\begin{equation*}
\label{pre1}
\sup \left\{\frac{a_{p}}{h^{p^{\sigma}}}: p \in \mathbb{N}_0\right\}<\infty,
\end{equation*}
if and only if
\begin{equation}
\label{pre2}
\sup \left\{\frac{a_{p}}{R_{p,\sigma}}: p \in \mathbb{N}_0\right\}<\infty,
\quad
\text{for any } \quad \left(r_j\right) \in  \mathcal{R}.
\end{equation}

\item[ii)] There exists $\left(r_j\right) \in  \mathcal{R}$ such that
\begin{equation*}
\label{pre3}
\sup \left\{R_{p,\sigma} a_{p}:  p \in \mathbb{N}_0\right\}<\infty,
\end{equation*}
if and only if
\begin{equation}
\label{pre4}
\sup \left\{h^{p^{\sigma}} a_{p}: p \in \mathbb{N}_0\right\}<\infty,
\quad
\text{for every } \quad h>0.
\end{equation}
\end{itemize}
\end{Lemma}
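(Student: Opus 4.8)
The plan is to reduce both equivalences to a single comparison between the constant product $h^{\lfloor p^\sigma\rfloor}$ and the increasing product $R_{p,\sigma}=\prod_{j=1}^{\lfloor p^\sigma\rfloor} r_j$. Writing $n_p:=\lfloor p^\sigma\rfloor$, so that $p^\sigma-1<n_p\le p^\sigma$ and hence $h^{p^\sigma}\asymp h^{n_p}$ with a constant depending only on $h$, the whole matter comes down to how $\prod_{j=1}^{n_p} r_j$ behaves relative to $\prod_{j=1}^{n_p} h=h^{n_p}$. The two ``easy'' implications, namely the forward direction in i) and the forward direction in ii), rest on the same observation: for a fixed $h>0$ and an arbitrary $(r_j)\in\mathcal R$ there is an index $j_0$ with $r_j\ge h$ for $j\ge j_0$, so that $h^{n_p}/R_{p,\sigma}=\prod_{j=1}^{n_p}(h/r_j)$ is bounded, once $n_p\ge j_0$, by the fixed constant $\prod_{j<j_0}(h/r_j)$. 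In i) this yields $a_p/R_{p,\sigma}\lesssim a_p/h^{p^\sigma}$ uniformly in the chosen sequence, while in ii) the reversed inequality $R_{p,\sigma}\ge c\,h^{n_p}$ gives $h^{p^\sigma}a_p\lesssim R_{p,\sigma}a_p$ for every $h$. In both cases the finitely many $p$ with $n_p<j_0$ are harmless.

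For the nontrivial direction in i) I would argue by contraposition: assuming $\sup_p a_p/h^{p^\sigma}=\infty$ for every $h>0$, I construct one sequence $(r_j)\in\mathcal R$ that violates \eqref{pre2}. Applying the hypothesis with $h=m\in\mathbb N$, and using that an unbounded ratio forces arbitrarily large indices, I pick $p_1<p_2<\cdots$ with $n_{p_m}$ strictly increasing and $a_{p_m}\ge m\cdot m^{p_m^\sigma}$. I then let $r_j$ be the step sequence equal to $m$ on the block $n_{p_{m-1}}<j\le n_{p_m}$ (perturbed by a negligible amount to make it strictly increasing, so $(r_j)\in\mathcal R$). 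Since every factor in $R_{p_m,\sigma}=\prod_{j=1}^{n_{p_m}} r_j$ is then $\le m$, I get $R_{p_m,\sigma}\le m^{n_{p_m}}\le m^{p_m^\sigma}$, whence $a_{p_m}/R_{p_m,\sigma}\ge m\to\infty$, the desired contradiction.

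The remaining direction in ii) is, I expect, the \emph{main obstacle}, since here one must exhibit a single good sequence rather than merely a bad one, and a naive block construction leaves a residual constant that could blow up. The fix is a surplus-exponent trick. Writing $C_k:=\sup_p k^{p^\sigma}a_p<\infty$ (finite by hypothesis), I again take $r_j=m$ on a block ending at $n_{P_m}$, which forces $R_{p,\sigma}\le m^{n_p}\le m^{p^\sigma}$ for $p\le P_m$; but I estimate $a_p$ at the stronger level $k=m^2$, so that on block $m$ one has $R_{p,\sigma}a_p\le m^{p^\sigma}\cdot C_{m^2}(m^2)^{-p^\sigma}=C_{m^2}\,m^{-p^\sigma}$. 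Choosing each block to start at an index $P_{m-1}$ large enough that $m^{p^\sigma}\ge C_{m^2}$ throughout the block makes this $\le 1$, so $\sup_p R_{p,\sigma}a_p<\infty$ after absorbing finitely many initial terms. The only remaining care is to check that $(r_j)$ is genuinely strictly increasing to infinity (a harmless within-block perturbation) and that the thresholds $P_m$ can be taken increasing, which holds since each is a finite number. Throughout I would lean on $p^\sigma-1<\lfloor p^\sigma\rfloor\le p^\sigma$ to pass freely between $h^{p^\sigma}$ and $h^{\lfloor p^\sigma\rfloor}$.
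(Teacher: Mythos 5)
Your proposal is correct, and three of its four implications coincide with the paper's own argument: the forward direction of i) is the same telescoping estimate $h^{p^\sigma}=\prod_{j\le j_0}h\cdot\prod_{j_0<j\le \lfloor p^\sigma\rfloor}r_j\,(h/r_j)\le C_1 R_{p,\sigma}$; your contraposition in i) is exactly the paper's contradiction argument, down to the piecewise-constant sequence $r_j=m$ on the block $\lfloor p_{m-1}^\sigma\rfloor<j\le\lfloor p_m^\sigma\rfloor$ (note the paper itself uses such non-strictly increasing step sequences as members of $\mathcal{R}$, so your ``perturbation to make it strictly increasing'' is an unnecessary precaution); and the paper dispatches ii)$(\Rightarrow)$ with the same remark you make. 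The genuine divergence is in ii)$(\Leftarrow)$, where the paper uses a conjugate (Legendre-type) construction in the style of Komatsu: setting $C_h:=\sup_p h^{p^\sigma}a_p$ for $h\ge 1$, it defines $H_0=1$, $H_j:=\sup\{h^j/C_h:h\ge1\}$, verifies that $(H_j)$ is logarithmically convex with $H_j/h^j\to\infty$, and takes $r_j=H_j/H_{j-1}$, which gives $R_{p,\sigma}a_p=H_{\lfloor p^\sigma\rfloor}a_p\le 1$ in one stroke. Your block construction with the surplus exponent --- bounding $a_p$ at level $k=m^2$ so that on block $m$ one gets $R_{p,\sigma}a_p\le C_{m^2}m^{-p^\sigma}\le 1$ once the block starts past a threshold beating the finite constant $C_{m^2}$ --- is a valid substitute, and it is arguably more elementary: it avoids checking log-convexity and the growth of the conjugate sequence, and it makes the two hard directions structurally parallel (the same diagonal block device, used once to build a counterexample and once, via the surplus exponent, to build a good sequence). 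What the paper's route buys in exchange is a canonical, non-inductive sequence with no threshold bookkeeping, and a direct link to the classical lemmas of Komatsu and Pilipovi\'c that this lemma adapts; your route's key insight, correctly identified, is that without the $m^2$ trick the residual constants $C_m$ would be uncontrolled, so the block approach would fail.
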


The proof of Lemma \ref{lm:konstante-i-nizovi} is given in the Appendix.

Note that in \eqref{NewClassesInd} and \eqref{NewClassesProj} we could put
$h^{\lfloor|\alpha|^{\sigma} \rfloor}$ instead of $h^{|\alpha|^{\sigma}}$
(this follows from the simple inequality $\lfloor p^{\sigma}\rfloor\leq p^{\sigma}\leq  2\lfloor p^{\sigma}\rfloor$, $p\in {\mathbb N}$).

\begin{Proposition}
\label{prop:extendedGervey-nizovi}
Let there  be given $\tau>0$, $\sigma>1$, and let $M^{\tau,\sigma}_p=p^{\tau p^\sigma}$, $ p \in \mathbb{N} $, $M^{\tau,\sigma}_0 = 1$.
Then the following is true:
\begin{itemize}
    \item[i)] $ \phi \in  {\mathcal E}_{\{\tau, \sigma\}}( \mathbb{R}^d)$ if and only if
for every compact set   $K\subset \subset \mathbb{R}^d$,
and for any $\left(r_p\right) \in  \mathcal{R}$ and
$R_{p,\sigma}$ given by \eqref{proizvod},
there exists $C_{K, (r_p)} >0$ such that
\begin{equation*} \label{NewClassesInd-nizovi}
|\partial^{\alpha} \phi (x)| \leq
C_{K, (r_p)}
R_{\lfloor |\alpha|^{\sigma} \rfloor, \sigma}
M_{|\alpha|} ^{\tau,\sigma},
\end{equation*}
for all $x \in K $, and all $ \alpha \in  \mathbb{N}_0 ^d$.
 \item[ii)] $ \phi \in {\mathcal E}_{(\tau, \sigma)}( \mathbb{R}^d)$  if and only if
for every compact set  $K\subset \subset \mathbb{R}^d$ there is a sequence
$\left(r_p\right) \in  \mathcal{R}$ and a constant $C_{K} > 0 $  satisfying
\begin{equation*} \label{NewClassesProj-nizovi}
|\partial^{\alpha} \phi (x)| \leq C_{K} \frac{M_{|\alpha|} ^{\tau,\sigma}}{R_{\lfloor |\alpha|^{\sigma} \rfloor, \sigma}}
\end{equation*}
for all $x \in K $ and for all $ \alpha \in  \mathbb{N}_0^d,$
where $R_{p,\sigma}$ given by \eqref{proizvod}.
\end{itemize}
\end{Proposition}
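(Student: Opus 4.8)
The plan is to deduce the proposition directly from Lemma \ref{lm:konstante-i-nizovi} by collapsing the family of estimates over all multi-indices into a single scalar sequence, and then reading off the two characterizations from parts i) and ii) of that lemma. Fix a compact set $K \subset\subset \mathbb{R}^d$ and, for $\phi \in C^{\infty}(\mathbb{R}^d)$, I would set
\begin{equation*}
a_p := \sup_{x \in K}\, \max_{|\alpha| = p} \frac{|\partial^{\alpha} \phi(x)|}{M_p^{\tau,\sigma}}, \qquad p \in \mathbb{N}_0 .
\end{equation*}
Since $\phi$ is smooth and $K$ is compact, each $a_p$ is finite, being a maximum over the finitely many multi-indices of length $p$ of the supremum of a continuous function on $K$; on the indices where $a_p = 0$ all the inequalities below hold trivially, so one may assume $(a_p)$ is a sequence of positive numbers and apply Lemma \ref{lm:konstante-i-nizovi} to it. The substance of the proof is that, after this reduction, the defining conditions \eqref{NewClassesInd} and \eqref{NewClassesProj} become \emph{exactly} the scalar conditions of the lemma, with the uniform constants $C_K$, $C_{K,h}$ absorbed into the finiteness of the corresponding suprema over $p$.

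For part i), I would argue as follows. By definition, $\phi \in {\mathcal E}_{\{\tau,\sigma\}}(\mathbb{R}^d)$ means that for every $K$ there exist $h>0$ and $C_K>0$ with $a_p \le C_K h^{p^{\sigma}}$ for all $p$, i.e. $\sup_p a_p / h^{p^{\sigma}} < \infty$ for some $h>0$. This is precisely the left-hand condition of Lemma \ref{lm:konstante-i-nizovi} i), whose equivalent form \eqref{pre2} reads $\sup_p a_p / R_{p,\sigma} < \infty$ for every $(r_p) \in \mathcal{R}$, with $R_{p,\sigma}$ from \eqref{proizvod}. Unfolding the definition of $a_p$ turns \eqref{pre2} into exactly the estimate displayed in part i), with constant $C_{K,(r_p)}$. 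The only quantifier point to track is that ``for some $h$'' on the class side corresponds to ``for every $(r_p)$'' on the sequence side, precisely as in the lemma.

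Part ii) is symmetric and uses Lemma \ref{lm:konstante-i-nizovi} ii). Membership $\phi \in {\mathcal E}_{(\tau,\sigma)}(\mathbb{R}^d)$ means that for every $K$ and every $h>0$ there is $C_{K,h}>0$ with $a_p \le C_{K,h} h^{p^{\sigma}}$; since this is demanded for all $h>0$, replacing $h$ by $1/h$ shows it is equivalent to $\sup_p h^{p^{\sigma}} a_p < \infty$ for every $h>0$, which is condition \eqref{pre4}. By Lemma \ref{lm:konstante-i-nizovi} ii) this is equivalent to the existence of $(r_p) \in \mathcal{R}$ with $\sup_p R_{p,\sigma}\, a_p < \infty$, which unfolds into the estimate displayed in part ii), i.e. there exist $(r_p) \in \mathcal{R}$ and $C_K>0$ bounding $|\partial^{\alpha}\phi(x)|$ by $C_K\, M_{|\alpha|}^{\tau,\sigma}/R_{\,\cdot\,,\sigma}$ on $K$.

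I expect no genuine difficulty here, since all the analytic content is already contained in Lemma \ref{lm:konstante-i-nizovi} and what remains is bookkeeping. The points requiring care, and the closest thing to an obstacle, are: (a) the reduction to the scalar sequence $(a_p)$, where finiteness and vanishing derivatives must be addressed; and (b) the correct matching of quantifiers, namely ``some $h$'' $\leftrightarrow$ ``all $(r_p)$'' in the Roumieu case and ``all $h$'' $\leftrightarrow$ ``some $(r_p)$'' in the Beurling case, together with the harmless $h \mapsto 1/h$ flip used to cast the Beurling condition in the form \eqref{pre4}. One may also freely replace $h^{|\alpha|^{\sigma}}$ by $h^{\lfloor |\alpha|^{\sigma}\rfloor}$, as noted before the statement, to keep the exponents in the class definitions aligned with the floored products $R_{p,\sigma}$.
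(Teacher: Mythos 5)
Your proposal is correct and takes essentially the same route as the paper, which proves the proposition with the single remark that it ``follows from Lemma \ref{lm:konstante-i-nizovi}''; your reduction to the scalar sequence $a_p=\sup_{x\in K}\max_{|\alpha|=p}|\partial^{\alpha}\phi(x)|/M_p^{\tau,\sigma}$, together with the quantifier matching (some $h$ versus all $(r_p)$ in the Roumieu case, all $h$ versus some $(r_p)$ in the Beurling case, via the $h\mapsto 1/h$ flip into \eqref{pre4}), is exactly the intended bookkeeping that the paper leaves implicit.
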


Proposition \ref{prop:extendedGervey-nizovi} follows from Lemma  \ref{lm:konstante-i-nizovi}.


We end this subsection by introducing spaces of ulradistributions as dual spaces of ${\mathcal E}_{\tau, \sigma}( \mathbb{R}^d)$, $\tau>0$, $\sigma>1$. In subsection \ref{subsec:peliviner} we will prove a Paley-Wiener type theorem for such ultradistributions.

\begin{Definition}
\label{def:extendedGerveydual}
Let $\tau>0$ and $\sigma>1$, and let  ${\mathcal E}_{\tau, \sigma}( \mathbb{R}^d) = {\mathcal E}_{\{\tau,\sigma\}}( \mathbb{R}^d)$ or ${\mathcal E}_{\tau, \sigma}( \mathbb{R}^d)  = {\mathcal E}_{(\tau, \sigma)}( \mathbb{R}^d)$.
Then $u\in \mathcal E'_{\tau,\sigma}(\mathbb R^d)$ if there exists a compact set $K$ in $\mathbb R^d$ and constants $\varepsilon,$ $ C>0$ such that
\begin{equation}
\label{ultraNejednakost}
| ( u,\varphi )|\leq C \sup_{\alpha\in \mathbb N^d, x\in K}\frac{|D^{\alpha}\varphi(x)|}{\varepsilon^{|\alpha|^{\sigma}} |\alpha|^{\tau |\alpha|^{\sigma}}},\quad\forall \varphi\in \mathcal E_{\tau,\sigma}(\mathbb R^d),
\end{equation}
and $( \cdot,\cdot )$ denotes standard dual pairing.

In a similar way ${\mathcal D}_{\tau, \sigma} ' ( \mathbb{R}^d)$
is the dual space of  ${\mathcal D}_{\tau, \sigma} ( \mathbb{R}^d)$.
\end{Definition}


\subsection{Example of a compactly supported function}

The non-quasianalyticity condition $(M.3)'$ provides the existence of nontrivial compactly supported functions in $\mathcal E_{\tau,\sigma}(\mathbb{R}^d)$ which can be formulated as follows.

\begin{Proposition} \label{NasaCutoff}
Let $\tau>0$ and $\sigma>1$.  For every $a>0$ there exists
$\phi_a \in {\mathcal E_{\tau,\sigma}}(\mathbb{R}^d)$ such that $\phi_a  \geq 0$, $\supp\phi_a \subset [-a,a]^d$, and $\int_{\mathbb R ^d} \phi_a (x) \, dx = 1$.
\end{Proposition}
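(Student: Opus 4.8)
The plan is to construct the desired function $\phi_a$ by the classical method used to establish non-quasi-analyticity: build a suitable cutoff function as an infinite convolution of scaled indicator functions, with the scaling factors chosen according to the ratios $M_{p-1}^{\tau,\sigma}/M_p^{\tau,\sigma}$, whose summability is precisely the content of condition $(M.3)'$ from Lemma \ref{osobineM_p_s}. First I would fix a sequence of positive numbers $(a_k)$ with $\sum_k a_k = a$, chosen so that $a_k$ is comparable to $a \, M_{k-1}^{\tau,\sigma}/M_k^{\tau,\sigma}$ (up to normalization); the series converges by $(M.3)'$. Let $\chi_k$ denote the normalized indicator of the cube $[-a_k, a_k]^d$ (so $\int \chi_k = 1$), and set
\begin{equation*}
\phi_a = \chi_1 * \chi_2 * \chi_3 * \cdots,
\end{equation*}
the infinite convolution, where the partial products converge because the supports shrink fast enough. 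Since each $\chi_k \geq 0$ and integrates to $1$, the product $\phi_a$ is nonnegative, integrates to $1$, and is supported in $[-a,a]^d$ as the support of a convolution is contained in the sum of supports.

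The key step is the derivative estimate. Differentiating the convolution, one distributes a single difference quotient onto each factor: applying $\partial^{\alpha}$ with $|\alpha| = p$ to the convolution of the first $p$ factors and leaving the tail as a probability measure, one obtains a bound of the form
\begin{equation*}
|\partial^{\alpha} \phi_a(x)| \leq \prod_{k=1}^{p} \frac{C}{a_k},
\end{equation*}
since each factor contributes essentially $a_k^{-1}$ from a single derivative of the indicator (interpreted via the telescoping/finite-difference structure of the smoothed product). Substituting the choice $a_k \asymp a\, M_{k-1}^{\tau,\sigma}/M_k^{\tau,\sigma}$ telescopes the product: $\prod_{k=1}^{p} (M_k^{\tau,\sigma}/M_{k-1}^{\tau,\sigma}) = M_p^{\tau,\sigma}$, which yields $|\partial^{\alpha}\phi_a(x)| \leq C^p a^{-p} M_p^{\tau,\sigma}$.

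The main obstacle, and where the parameter $\sigma$ genuinely enters, is converting this estimate into the required form \eqref{NewClassesInd} (or \eqref{NewClassesProj}), namely producing the factor $h^{|\alpha|^{\sigma}}$ rather than $h^{|\alpha|}$. The crude bound above only gives a geometric factor $C^{p}$, so I would need to distribute the derivatives more efficiently across the convolution factors rather than stacking all of them onto the indicator differences; properly spreading $\partial^\alpha$ over a number of factors growing like $p^{\sigma}$ (and using $\widetilde{(M.2)'}$ from Lemma \ref{osobineM_p_s} to control how the defining sequence behaves under such regrouping) replaces $C^p$ by $C^{p^{\sigma}}$. This is exactly the point at which the extended Gevrey scaling $h^{|\alpha|^\sigma}$ emerges, and it is the technically delicate part of the argument; I would handle it by choosing the block sizes in the infinite convolution adaptively and invoking the combinatorial estimates underlying $(M.3)'$ together with the growth of $M_p^{\tau,\sigma}$. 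For the Roumieu case this produces some fixed $h$, while for the Beurling case one rescales the $a_k$ to make $h$ arbitrarily small, completing the proof in both settings.
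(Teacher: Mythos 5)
Your skeleton --- an infinite convolution of normalized bumps whose widths $a_k$ are proportional to $M_{k-1}^{\tau,\sigma}/M_k^{\tau,\sigma}$ (summable by $(M.3)'$), one derivative per factor, and a telescoping product --- is essentially the paper's construction (the paper convolves scaled copies $f_{a_p}$ of a fixed smooth bump and passes to the limit, with $\|f'_{a_p}\|_1\le c/a_p$ playing the role of your $C/a_k$; note also that to bound $\partial^{\alpha}\phi_a$ in $L^{\infty}$ you must keep one undifferentiated factor, which costs an extra $\|\chi_{p+1}\|_{\infty}\asymp a_{p+1}^{-d}$, harmless by $\widetilde{(M.2)'}$). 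But your diagnosis of where the difficulty lies is mistaken, and it derails your final paragraph. Since $|\alpha|\le|\alpha|^{\sigma}$, a geometric factor is \emph{stronger} than what \eqref{NewClassesInd} asks for: taking $h=\max(C/a,1)\ge 1$, the telescoped bound $(C/a)^{|\alpha|}M^{\tau,\sigma}_{|\alpha|}$ is already of the admissible form $C_K h^{|\alpha|^{\sigma}}M^{\tau,\sigma}_{|\alpha|}$. The factor $h^{|\alpha|^{\sigma}}$ is an allowance you get for free, not something you must manufacture, so the proposed "technically delicate" step of spreading $\partial^{\alpha}$ over $\sim p^{\sigma}$ factors is neither needed nor coherent --- you only have $p=|\alpha|$ derivatives to distribute --- and nothing of the sort occurs in the paper. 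The Roumieu case is finished the moment the telescoping is done.

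The genuine gap is the Beurling case, and your one-line fix fails. Rescaling the $a_k$ produces a \emph{different} function for each $h$, whereas \eqref{NewClassesProj} requires a single $\phi_a$ satisfying the estimate for \emph{every} $h>0$; moreover no geometric constant can help, since for $h<1$ one would need $C^{p}h^{-p^{\sigma}}$ bounded, which is false for every $C>0$ because $p^{\sigma}\gg p$. The paper's remedy is to build the slack into the widths themselves: it fixes $N_1<N_2<\cdots$ and sets $a_p=(2(p+1))^{-\frac{1}{m}p^{\sigma-1}}$ for $N_m\le p<N_{m+1}$, i.e., widths matching the ratios of $M^{1/m,\sigma}_p$ with $1/m\downarrow 0$ along blocks. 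Given $\tau>0$, choosing $m$ with $1/m<\tau$ and differentiating only factors with indices $N_m+1,\dots,N_m+n$, then invoking $\widetilde{(M.2)'}$ to absorb the index shift, yields $|\phi^{(n)}(x)|\le C_{\tau}^{n^{\sigma}}n^{\tau n^{\sigma}}$ for \emph{every} $\tau>0$, so the single limit function lies in both $\mathcal E_{\{\tau,\sigma\}}(\mathbb R^d)$ and $\mathcal E_{(\tau,\sigma)}(\mathbb R^d)$ for all $\tau>0$ simultaneously (and is sharp: it belongs to no Gevrey class). If you want to keep your fixed-ratio construction, the minimal repair for the Beurling class of parameter $\tau$ is to run it with any $\tau'<\tau$: then $C^{p}M^{\tau',\sigma}_p\le C_h h^{p^{\sigma}}M^{\tau,\sigma}_p$ for every $h>0$, because $p^{(\tau-\tau')p^{\sigma}}$ eventually dominates $C^{p}h^{-p^{\sigma}}$.
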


Of course, any compactly supported Gevrey function from $\mathcal G_{\tau}(\mathbb{R}^d)$ will suffice. However,  the construction in Proposition \ref{NasaCutoff},  is sharp in the sense that $\phi_a$ does not belong to any Gevrey class, i.e.
$\phi_a \not\in \bigcup_{t>1}  {\mathcal G}_t ({\mathbb R}^d)$.
We refer to the proof of  \cite[Lemma 1.3.6.]{H} for more details.

\begin{proof} We give a proof when $d=1$, and for $d\geq 2$ the proof follows by taking the tensor product.

Since $ \mathcal{D}_{\tau,  \sigma}({\mathbb R})$ is closed under dilation and multiplication by a constant, it is enough to show  the result for $a=1$, and set $\phi_1 = \phi$.

From
\begin{equation*}
  \sum_{p=1}^{\infty} \frac{1}{(2 (p + 1))^{\frac{1}{m}p^{\sigma - 1}}}< \infty
\end{equation*}
for any $m\in\mathbb N _0$ and any given $\sigma >1$,
it follows that there exists a sequence of nonnegative integers $(N_m)$ such that
\begin{equation*}
    \sum_{n=N_m}^{\infty} \frac{1}{(2 (p + 1))^{\frac{1}{m}p^{\sigma - 1}}} < \frac{1}{2^m}.
\end{equation*}
Thus the sequence  $a_p $, $p \in \mathbb N _0$, given by
\begin{equation*}
    a_p := \frac{1}{(2 (p + 1))^{\frac{1}{m}p^{\sigma - 1}}}, \quad N_m  \leq p < N_{m+1},
\end{equation*}
satisfies
\begin{equation*}
    \sum_{n=N_1}^{\infty} a_p \leq 1.
\end{equation*}

Let $f\in C^{\infty} (\mathbb R )$ be a non-negative and even function such that
$\supp f \in [-1,1]$, $\int_{-1}^1 f(x) \, dx = 1$,
and $f_{a} (x) = \frac{1}{a} f\left(\frac{x}{a}\right)$.
Then we define the sequence of functions $(\phi_p)$ by
\begin{equation*}
    \phi_{p} := f_{a_{N_1}}* f_{a_{N_1 + 1}}*\dots * f_{a_{p}}, \qquad p \in   \mathbb N _0.
\end{equation*}
Note that
\begin{equation*}
    \supp\phi_{p} \subset [-1,1], \qquad
\int_{-1}^1 \phi_{p}(x) \, dx = 1, \qquad
p\in\mathbb N _0,
\end{equation*}
\begin{equation*}
    \phi_{p} ^{(n)} = f_{a_{N_1}}*\dots * f_{a_{N_m}}* f'_{a_{N_m + 1}}*\dots * f '_{a_{N_m + n}}*
f_{a_{N_m + n + 1}}*\dots * f_{a_{p}},
\end{equation*}
and
\begin{equation} \label{procena-f}
\| f' _{a_p} \|_1 = \frac{1}{a_p}
\int_{\mathbb{R}} \frac{1}{a_p} \left| f' \left( \frac{x}{a_p} \right) \right | \, dx
\leq \frac{c}{a_p} \leq c \, (2 (p + 1))^{\frac{1}{m}p^{\sigma - 1}},
\end{equation}
when $p\geq N_m$.

Let there be given $n\in\mathbb N _0$ and $\tau > 0$. Then we choose $m,p\in\mathbb N _0$ so that $1/m < \tau$, and $N_m + n < p$.

By using \eqref{procena-f} and the fact that ${\widetilde{(M.2)'}}$ implies
\begin{equation*}
    M^{\frac{1}{m}, \sigma} _{p+q} \leq \tilde{C}^{p^\sigma} M^{\frac{1}{m}, \sigma} _{p}
\end{equation*}
for some $\tilde{C}= \tilde{C}(q)>0$, we obtain
\begin{align*}
|\phi_p ^{(n)} (x)| &\leq c^n \, 2^{\frac{1}{m} \sum_{k=1}^n (N_m+k)^{\sigma - 1}} \prod_{k=1}^n (N_m + k + 1)^{\frac{1}{m} (N_m+k)^{\sigma - 1}}\\
&\leq c^n 2^{\frac{1}{m} (N_m + n)^{\sigma }} (N_m + n + 1)^{\frac{1}{m} (N_m + n + 1)^{\sigma }}\\
&
\leq C^{n^{\sigma}} n^{\tau n^{\sigma}},
\end{align*}
where  $C$ depends on $\tau$.

The sequence $ \{ \phi_p ^{(n)} \mid  p = N_1, N_2, \dots \} $ is a Cauchy sequence for every $ n \in\mathbb N_0$. Thus, it converges to a function $ \phi $ that satisfies
\begin{equation*}
    |\phi ^{(n)} (x)| \leq C^{n^{\sigma}} n^{\tau n^{\sigma}},
\end{equation*}
for every $\tau > 0$. Therefore, $\phi \in \mathcal{D}_{\tau, \sigma}({\mathbb R})$, and by the construction $\phi  \geq 0$, $\supp\phi \subset [-1,1]$ and $\int_{\mathbb R } \phi (x) \, dx = 1$, which completes the proof.
\end{proof}

\subsection{Algebra property}

Since $M_p^{\tau,\sigma}$ satisfies properties $(M.1)$ and $\widetilde{(M.2)'}$
we have the following.

\begin{Proposition}
\label{propalgebra}
${\mathcal E}_{\tau, \sigma}(\mathbb R^d)$ is closed under the pointwise multiplication of functions
and under the (finite order) differentiation.
\end{Proposition}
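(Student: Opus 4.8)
The plan is to verify the algebra property directly from the defining estimates together with the multi-index Leibniz rule, treating the two claims separately. The differentiation claim is the easier one and I would dispose of it first: if $\phi \in {\mathcal E}_{\tau,\sigma}(\mathbb R^d)$ and $\beta \in \mathbb N_0^d$ is fixed, then $\partial^\alpha(\partial^\beta \phi) = \partial^{\alpha+\beta}\phi$, so I need only control $|\alpha+\beta|^\sigma$ and $M_{|\alpha+\beta|}^{\tau,\sigma}$ in terms of $|\alpha|$. Since $\beta$ is fixed, both quantities differ from their $|\alpha|$-counterparts by a factor that grows sub-exponentially in $|\alpha|^\sigma$; the key tool is $\widetilde{(M.2)'}$, which gives $M_{p+1}^{\tau,\sigma} \le C^{p^\sigma} M_p^{\tau,\sigma}$ and hence, by iteration $|\beta|$ times, $M_{|\alpha|+|\beta|}^{\tau,\sigma} \le C'^{|\alpha|^\sigma} M_{|\alpha|}^{\tau,\sigma}$ for a suitable $C'$ depending on $|\beta|$. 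Absorbing the constant powers into the $h^{|\alpha|^\sigma}$ factor (Roumieu) or handling them for every $h$ (Beurling) then returns an estimate of the required form.

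For the multiplication claim, let $\phi,\psi \in {\mathcal E}_{\tau,\sigma}(\mathbb R^d)$ and apply the Leibniz rule
\begin{equation*}
\partial^\alpha(\phi\psi) = \sum_{\gamma \le \alpha} \binom{\alpha}{\gamma}\,\partial^\gamma\phi\,\partial^{\alpha-\gamma}\psi .
\end{equation*}
On a compact $K$, each term is bounded by $C_K^2\, h^{|\gamma|^\sigma} h^{|\alpha-\gamma|^\sigma} M_{|\gamma|}^{\tau,\sigma} M_{|\alpha-\gamma|}^{\tau,\sigma}$. The heart of the matter is to recombine $M_{|\gamma|}^{\tau,\sigma} M_{|\alpha-\gamma|}^{\tau,\sigma}$ into a single factor $M_{|\alpha|}^{\tau,\sigma}$. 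Here I would invoke $(M.1)$, which by the remark after the Komatsu conditions yields $M_p^{\tau,\sigma} M_q^{\tau,\sigma} \le M_{p+q}^{\tau,\sigma}$; applied with $p=|\gamma|$, $q=|\alpha-\gamma|$ this gives exactly $M_{|\gamma|}^{\tau,\sigma} M_{|\alpha-\gamma|}^{\tau,\sigma} \le M_{|\alpha|}^{\tau,\sigma}$. For the $h$-powers I use $|\gamma|^\sigma + |\alpha-\gamma|^\sigma \le |\alpha|^\sigma$ (valid since $\sigma>1$ and the function $t\mapsto t^\sigma$ is superadditive on $[0,\infty)$), so $h^{|\gamma|^\sigma}h^{|\alpha-\gamma|^\sigma} \le h^{|\alpha|^\sigma}$ whenever $h \le 1$, or is otherwise bounded by $h^{|\alpha|^\sigma}$ after a harmless adjustment of the constant. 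Finally the combinatorial sum $\sum_{\gamma\le\alpha}\binom{\alpha}{\gamma} = 2^{|\alpha|}$ is purely exponential in $|\alpha|$, hence negligible against $h^{|\alpha|^\sigma}$ for $\sigma>1$, and is absorbed into the new $h$.

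The main obstacle, and the point requiring genuine care, is the bookkeeping of the constant $h$ across the Roumieu and Beurling cases simultaneously. In the Roumieu case one starts with \emph{some} $h$ for $\phi,\psi$ and must produce \emph{some} (possibly larger) $h'$ for the product, so the superadditivity step and the $2^{|\alpha|}$ absorption are unproblematic: one simply picks $h'$ large enough. In the Beurling case the quantifier is reversed — the estimate must hold for \emph{every} $h$ — so given a target $h$ I must first choose a working bound $h_0$ (with $h_0 \le \min\{h,1\}$, say) for $\phi$ and $\psi$ so that the exponential factor $2^{|\alpha|}$ and any stray constants from $\sigma>1$ superadditivity still fit under $h^{|\alpha|^\sigma}$; the inequality $2^{|\alpha|} \le D\,(h/h_0)^{|\alpha|^\sigma}$ for large $|\alpha|$, which holds because $|\alpha|^\sigma$ dominates $|\alpha|$, is what makes this possible. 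I expect this quantifier management to be the only subtle step; all the underlying sequence inequalities are already supplied by Lemma \ref{osobineM_p_s}.
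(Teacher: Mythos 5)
Your plan retraces the paper's own proof almost step for step: the Leibniz formula, the consequence $M^{\tau,\sigma}_{p}M^{\tau,\sigma}_{q}\leq M^{\tau,\sigma}_{p+q}$ of $(M.1)$ (with $M^{\tau,\sigma}_0=1$), absorption of $\sum_{\gamma\leq\alpha}\binom{\alpha}{\gamma}=2^{|\alpha|}$ via $|\alpha|\leq|\alpha|^{\sigma}$, differentiation handled through $\widetilde{(M.2)'}$, and the reversed-quantifier game in the Beurling case. There is, however, one concrete misstep, and it occurs precisely in the regime that the Beurling case forces on you: your claim that $h^{|\gamma|^{\sigma}}h^{|\alpha-\gamma|^{\sigma}}\leq h^{|\alpha|^{\sigma}}$ ``whenever $h\leq 1$'' has the monotonicity backwards. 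Superadditivity gives $|\gamma|^{\sigma}+|\alpha-\gamma|^{\sigma}\leq|\alpha|^{\sigma}$, and for a base $h<1$ a \emph{smaller} exponent produces a \emph{larger} value, so in fact $h^{|\gamma|^{\sigma}+|\alpha-\gamma|^{\sigma}}\geq h^{|\alpha|^{\sigma}}$ there; your inequality is valid only for $h\geq 1$, which settles the Roumieu case (where one may enlarge $h$ for free) but not Beurling. The error propagates into your final bookkeeping: with $h_0\leq\min\{h,1\}$ one can only use the two-sided bound $2^{1-\sigma}|\alpha|^{\sigma}\leq|\gamma|^{\sigma}+|\alpha-\gamma|^{\sigma}\leq|\alpha|^{\sigma}$ (the minimum of $t^{\sigma}+(|\alpha|-t)^{\sigma}$ sits at $t=|\alpha|/2$), giving $h_0^{|\gamma|^{\sigma}+|\alpha-\gamma|^{\sigma}}\leq h_0^{2^{1-\sigma}|\alpha|^{\sigma}}$, so the quantity that must dominate $2^{|\alpha|}$ is $\bigl(h/h_0^{2^{1-\sigma}}\bigr)^{|\alpha|^{\sigma}}$, not $(h/h_0)^{|\alpha|^{\sigma}}$; the condition $h_0<h$ alone is insufficient. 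The repair is one line: take, say, $h_0=\min\bigl\{1,(h/2)^{2^{\sigma-1}}\bigr\}$, whence $2^{|\alpha|}h_0^{2^{1-\sigma}|\alpha|^{\sigma}}\leq 2^{|\alpha|^{\sigma}}(h/2)^{|\alpha|^{\sigma}}=h^{|\alpha|^{\sigma}}$.

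It is worth noting that the paper's proof performs exactly this repair implicitly: after setting the two working bounds equal it does not keep the power of $h$ fixed but lets it change (the product of $h$-factors is recorded as a power $h^{2^{\sigma}|\alpha|^{\sigma}}$ up to the $2^{|\alpha|}$ factor), and then, for a prescribed $\tilde h$, it chooses $h<(\tilde h/2)^{1/2^{\sigma}}$ so that $2h^{2^{\sigma}}\leq\tilde h$. So the fix amounts to replacing your ratio $h/h_0$ by the appropriate power of $h_0$ when choosing the working bound; with that single change your argument, including the iterated use of $\widetilde{(M.2)'}$ for closedness under differentiation, coincides with the paper's.
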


\begin{proof}
Let us prove that ${\mathcal E}_{(\tau, \sigma)}(\mathbb R^d)$ is closed under pointwise multiplication, since its closedness under the differentiation follows from $\widetilde{(M.2)'}$.
We refer to \cite{PTT-01} for the Roumieu case ${\mathcal E}_{\{\tau, \sigma\}}(\mathbb R^d)$ .

Let $\phi,\psi\in {\mathcal E}_{(\tau, \sigma)}(\mathbb R^d)$. Let $K$ be a compact subset of
$\mathbb R^d$. Then for every $h,k>0$ there exist constants $ C_{K,h}>0$ and $C_{K,k}>0$
such that
\begin{equation*}
\sup_{x\in K}
|\partial^{\alpha} \phi (x)| \leq  C_{K,h} h^{|\alpha|^{\sigma}}|\alpha|^{\tau|\alpha|^{\sigma}}, \quad
\text{and} \quad
\sup_{x\in K}
|\partial^{\alpha} \psi (x)| \leq \tilde C_{K,k} k^{|\alpha|^{\sigma}}|\alpha|^{\tau|\alpha|^{\sigma}}<\infty.
\end{equation*}
For simplicity, assume that $\tau = 1$, and the proof for $\tau > 1$ is similar.

\par

By the Leibniz formula and  $(M.1)$ we have
\begin{multline*}
    |\partial^{\alpha}(\phi \psi)(x)| \leq  \sum_{\beta \leq \alpha} \binom{\alpha}{\beta} |\partial^{\alpha-\beta}\phi(x)||\partial^{\beta}\psi(x)| \\
    \leq  C_{K,h} \tilde C_{K,k} \sum_{\beta \leq \alpha} \binom{\alpha}{\beta} h^{|\alpha-\beta|^{\sigma}}|\alpha - \beta|^{|\alpha-\beta|^{\sigma}}k^{|\beta|^{\sigma}}|\beta|^{|\beta|^{\sigma}}\\
\leq  C_{K,h} \tilde C_{K,k}  |\alpha|^{|\alpha|^{\sigma}}
\sum_{\beta \leq \alpha} \binom{\alpha}{\beta} h^{|\alpha-\beta|^{\sigma}}k^{|\beta|^{\sigma}}, \;\;\; x \in K.
\end{multline*}

By choosing $h=k$ we get
\begin{equation*}
    \sum_{\beta \leq \alpha} \binom{\alpha}{\beta} h^{|\alpha-\beta|^{\sigma}}k^{|\beta|^{\sigma}}
\leq 2^{|\alpha|} h^{2^{\sigma} |\alpha|^{\sigma}} \leq
(2^{|\alpha|} h^{2^{\sigma}})^{ |\alpha|^{\sigma}},
\end{equation*}
and obtain
\begin{equation*}
      |\partial^{\alpha}(\phi \psi)(x)|\leq   C (2^{|\alpha|} h^{2^{\sigma}})^{ |\alpha|^{\sigma}} |\alpha|^{|\alpha|^{\sigma}},
\end{equation*}
with $ C=  C_{K,h} \tilde C_{K,h}$.

Thus, for any given $\tilde h>0$ we can choose $h <(\tilde h/2)^{1/2^{\sigma}}$
to get
\begin{equation*}
      |\partial^{\alpha}(\phi \psi)(x)|\leq   C\tilde h^{ |\alpha|^{\sigma}} |\alpha|^{|\alpha|^{\sigma}},
\end{equation*}
where $C >0$ depends on $K$ and $\tilde h$, that is,
$ \phi \psi \in {\mathcal E}_{(\tau, \sigma)}(\mathbb R^d)$.
\end{proof}

\subsection{Inverse closedness and composition}

We need some preparation related to the decompositions that appear when using the
generalized Fa\`a di Bruno formula.

Let there be given a multiindex $\alpha\in \mathbb N^d$.  We say that $\alpha $  is decomposed into parts $p_1,\dots,p_s\in \mathbb N^d$ with multiplicities $m_1,\dots,m_s\in \mathbb N$, if
\begin{equation}
\label{oblikParticijaFaa}
\alpha=m_1 p_1+m_2 p_2+\dots+m_s p_s,
\end{equation}
where $|p_i|\in\{1,\dots,|\alpha|\}$, $m_i\in\{0,1,\dots, |\alpha|\}$, $i =1,\dots,s$.
If $p_i=(p_{i_1},\dots, p_{i_d})$, $i\in \{1,\dots,s\}$, we put $p_i<p_j$ when $i<j$ if there exists $k\in \{1,\dots,d\}$ such that $p_{i_1}=p_{j_1},\dots, p_{i_{k-1}}=p_{j_{k-1}}$ and $p_{i_{k}}<p_{j_{k}}$.
Note that $s\leq |\alpha|$ and $m= m_1+\dots+m_s \leq |\alpha|$.

The triple  $(s,p,m)$ is called the decomposition of $\alpha$ and the set of all decompositions of the form \eqref{oblikParticijaFaa} is denoted by $\pi$.

For smooth functions $f: \mathbb R\to {\mathbb C}$ and $g: \mathbb R^d\to \mathbb R$, the generalized Fa\`a di Bruno formula is given by
\begin{equation}
\label{GeneralizedFaaDiBruno}
\partial^{\alpha}(f(g))=\alpha! \sum_{(s,p,m)\in \pi} f^{(m)}(g)\prod_{k=1}^{s}\frac{1}{m_k!}\Big(\frac{1}{p_k !}\partial^{p_k} g \Big)^{m_k}.
\end{equation}

The total number ${\rm card}\, \pi$ of different decompositions of a multiindex $\alpha\in \mathbb N^d$ given by \eqref{oblikParticijaFaa}  can be estimated as follows:
$ \displaystyle {\rm card}\, \pi\leq (1+|\alpha|)^{d+2},$
cf. \cite[Remark 2.2]{TT-01}.

\begin{Theorem}
\label{teoremaInverseClosedness}
Let  $\tau>0$, $\sigma>1$. Then the extended Gevrey class $\mathcal E_{\tau,\sigma}(\mathbb R^d)$ is inverse-closed in $C^{\infty}(\mathbb R^d)$.
\end{Theorem}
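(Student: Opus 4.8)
The plan is to prove the reciprocal stability directly. Inverse-closedness in $C^\infty(\mathbb R^d)$ means that whenever $\phi\in\mathcal E_{\tau,\sigma}(\mathbb R^d)$ has no zeros, the reciprocal $1/\phi$ again belongs to $\mathcal E_{\tau,\sigma}(\mathbb R^d)$. Since the defining estimates \eqref{NewClassesInd} (resp.\ \eqref{NewClassesProj}) are local, I would fix a compact set $K$, put $\rho:=\min_{x\in K}|\phi(x)|>0$, and write $1/\phi=F\circ\phi$ on $K$ with $F(t)=1/t$. The function $F$ is analytic away from the origin and satisfies $|F^{(m)}(\phi(x))|=m!\,|\phi(x)|^{-(m+1)}\le m!\,\rho^{-(m+1)}$ for $x\in K$. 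The whole estimate then rests on the generalized Fa\`a di Bruno formula \eqref{GeneralizedFaaDiBruno}, which expands $\partial^\alpha(F\circ\phi)$ as a sum over decompositions $(s,p,m)\in\pi$ of $\alpha$, after which I substitute the bounds on $\partial^{p_k}\phi$ and on $F^{(m)}$.

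The heart of the argument is a single clean estimate on the product of defining factors produced by Fa\`a di Bruno. For a decomposition $\alpha=m_1p_1+\dots+m_sp_s$ one has $|\alpha|=\sum_k m_k|p_k|$ and $|p_k|\le|\alpha|$, and I would combine these into $\sum_k m_k|p_k|^{\sigma}\le|\alpha|^{\sigma-1}\sum_k m_k|p_k|=|\alpha|^{\sigma}$. Passing to $M^{\tau,\sigma}_p=p^{\tau p^{\sigma}}$ this yields at once
\[
\prod_{k=1}^{s}\big(M^{\tau,\sigma}_{|p_k|}\big)^{m_k}=\prod_{k=1}^{s}|p_k|^{\tau m_k|p_k|^{\sigma}}\le |\alpha|^{\tau\sum_k m_k|p_k|^{\sigma}}\le |\alpha|^{\tau|\alpha|^{\sigma}}=M^{\tau,\sigma}_{|\alpha|}.
\]
Substituting $|\partial^{p_k}\phi(x)|\le C\,h^{|p_k|^{\sigma}}M^{\tau,\sigma}_{|p_k|}$ into \eqref{GeneralizedFaaDiBruno} and using $\sum_k m_k|p_k|^{\sigma}\le|\alpha|^{\sigma}$ once more for the powers of $h\ge 1$ isolates exactly the target factor $h^{|\alpha|^{\sigma}}M^{\tau,\sigma}_{|\alpha|}$.

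What remains are the harmless factors: the combinatorial weight $\alpha!\prod_k(m_k!\,(p_k!)^{m_k})^{-1}\le|\alpha|!$, the derivative factor $m!\le|\alpha|!$ coming from $F^{(m)}$, the power $(C/\rho)^{|\alpha|}$, and the number of terms ${\rm card}\,\pi\le(1+|\alpha|)^{d+2}$. All of these are subexponential and are dominated by $A^{|\alpha|^{\sigma}}$ for an arbitrary $A>1$: since $\sigma>1$, the product $|\alpha|!^{2}\,(C/\rho)^{|\alpha|}(1+|\alpha|)^{d+2}$ is bounded by $C_A\,A^{|\alpha|^{\sigma}}$, which is precisely the comparison $p!^{s}\le C_1\,p^{\tau p^{\sigma}}$ already recorded for the sequences. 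Hence on $K$ one obtains $|\partial^{\alpha}(1/\phi)(x)|\le C_A\,(Ah)^{|\alpha|^{\sigma}}M^{\tau,\sigma}_{|\alpha|}$.

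For the Roumieu class this finishes the proof, since $h$ is fixed and any resulting $Ah$ is an admissible step size. The delicate point, and the genuinely new part, is the Beurling case, where the quantifiers are reversed: given an arbitrary target $\tilde h>0$ I must produce the estimate with $\tilde h$ in place of $h$. Here I would first fix $h=\tilde h/2$, which freezes the constant $C_{K,h}$ and hence $C/\rho$, and only afterwards choose the auxiliary $A$ close enough to $1$ (say $A\le 2$) so that the effective step size satisfies $Ah\le\tilde h$. The order of these choices is essential: the $h$-dependent constant enters only through a base raised to the power $|\alpha|$ rather than $|\alpha|^{\sigma}$, so once $h$ is frozen it is swallowed by the subexponential factor without affecting the step size. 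I expect this quantifier bookkeeping, rather than any individual inequality, to be the main obstacle.
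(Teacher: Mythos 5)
Your Roumieu argument is correct and is essentially the standard route (the one in \cite{TT-01}, which the paper cites for this case): Fa\`a di Bruno for $F(t)=1/t$, the combinatorial weight, ${\rm card}\,\pi\le(1+|\alpha|)^{d+2}$, and the key product estimate
$\prod_{k}\bigl(M^{\tau,\sigma}_{|p_k|}\bigr)^{m_k}\le|\alpha|^{\tau\sum_k m_k|p_k|^{\sigma}}\le M^{\tau,\sigma}_{|\alpha|}$,
which is valid since $|p_k|\le|\alpha|$ and $\sum_k m_k|p_k|^{\sigma}\le|\alpha|^{\sigma-1}\sum_k m_k|p_k|=|\alpha|^{\sigma}$; the factorial and geometric factors are indeed $O(A^{|\alpha|^{\sigma}})$ for every $A>1$ because $\sigma>1$. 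In the Roumieu case you may also assume $h\ge1$ without loss of generality, so the step $h^{\sum_k m_k|p_k|^{\sigma}}\le h^{|\alpha|^{\sigma}}$ is legitimate there, and that half of the theorem is complete.

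The Beurling case, however, has a genuine gap, and it is exactly at the step you flag as delicate. The inequality $h^{\sum_k m_k|p_k|^{\sigma}}\le h^{|\alpha|^{\sigma}}$ \emph{reverses} when $h<1$, and your prescription $h=\tilde h/2$ forces $h<1$ whenever $\tilde h<2$. The exponent $S:=\sum_k m_k|p_k|^{\sigma}$ can be as small as $|\alpha|$: take $\alpha=n e_1$ with the decomposition $s=1$, $p_1=e_1$, $m_1=n$, where moreover $\prod_k M^{\tau,\sigma}_{|p_k|}{}^{m_k}=M_1^{\,n}=1$. For that term your bound degrades to $C_{K,h}^{\,n}h^{n}$ times subexponential factors, i.e.\ to $C_A A^{n^{\sigma}}h^{n}$ with $A>1$, and no choice of $A$ close to $1$ makes $A^{n^{\sigma}}h^{n}\le C\,\tilde h^{n^{\sigma}}$ for $\tilde h<1$; the ratio $(A/\tilde h)^{n^{\sigma}}h^{n}$ blows up. Reordering the choices of $h$ and $A$ does not help, because the deficiency sits in the \emph{exponent} of $h$, not in the frozen constant $C_{K,h}$. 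The statement is nevertheless true, and your approach can be repaired by not splitting the $h$-powers from the $M$-factors: keep the combined quantity $(h|\alpha|^{\tau})^{S}$ and use that its base is $\ge1$ for all but finitely many values of $|\alpha|$, so that monotonicity in $S\le|\alpha|^{\sigma}$ gives $h^{S}|\alpha|^{\tau S}\le h^{|\alpha|^{\sigma}}M^{\tau,\sigma}_{|\alpha|}$ (the slack $\prod_k M^{\tau,\sigma}_{|p_k|}{}^{m_k}\ll M^{\tau,\sigma}_{|\alpha|}$ pays for the missing powers of $h$), treating the finitely many small lengths by enlarging the constant. The paper avoids the issue differently: it reduces the Beurling case to the product estimate \eqref{productestimate} and proves it by induction on $|\alpha|$ via the algebra property (Proposition \ref{propalgebra}), where the quantifier bookkeeping is done by choosing $h$ small after the Leibniz expansion.
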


\begin{proof}
The proof for $\mathcal E_{\{\tau,\sigma\}}(\mathbb R^d)$ is given in \cite{TT-01}. Here we give the proof for $\mathcal E_{(\tau,\sigma)}(\mathbb R^d).$

\par

Let $\phi\in \mathcal E_{(\tau,\sigma)}(\mathbb R^d)$, $\phi(x)\not=0$, $x\in \mathbb R^d$, and let $K$ be a compact set in $\mathbb R^d$.

When $d=1$ the proof is straightforward: Let $c \in (0,1) $ be such that
$ |\phi (x)| \geq c$. Then we have
\begin{equation*}
     \left|\left(\frac{1}{\phi(x)}\right)^{(\alpha)} \right |  \leq
  \frac{\alpha !}{|\phi(x)|^{\alpha+1}} |\phi^{(\alpha)}(x)|
\leq  C_{K,h} \alpha ! \left( \frac{1}{c} \right) h^{\alpha^{\sigma}} M^{\tau,\sigma} _{\alpha},
\end{equation*}
so for each $\tilde h >0$ there is $C>0$ such that
\begin{equation*}
      \left |\left(\frac{1}{\phi(x)}\right)^{(\alpha)}\ \right |
\leq C (\tilde h)^{\alpha^{\sigma}} M^{\tau,\sigma} _{\alpha},
\end{equation*}
that is, $1/\phi \in  \mathcal E_{(\tau,\sigma)}(\mathbb R^d)$.

When $d\geq 2$, we employ the Fa\`a di Bruno formula  \eqref{GeneralizedFaaDiBruno},  to obtain
\begin{equation*}
\label{GeneralizedFaaDiBrunoInverseIneq}
\left |\partial^{\alpha}\Big(\frac{1}{\phi(x)}\Big) \right |
\leq |\alpha|!\sum_{(s,p,m)\in \pi}\frac{m!}{m_1!\dots m_s! |\phi(x)|^{m+1}}\prod_{k=1}^{s}\Big(\frac{|\partial^{p_k} \phi(x)| }{p_k!}\Big)^{m_k},
\end{equation*}
for arbitrary $x\in K$.
Let $c \in (0,1)$ be chosen so that $|\phi(x)|\geq c$ for $x\in K$. Then
\begin{equation*}
\label{GeneralizedFaaDiBrunoInverse1}
\frac{m!}{m_1!\dots m_s! |\phi(x)|^{m+1}}\leq
\frac{s^m}{c^{m+1}} \frac{ m_1!\dots m_s!}{m_1!\dots m_s!}
\leq C^{|\alpha|^{\sigma}+1} ,
\end{equation*}
for a suitable constant $C>0$, where we used  $s,m\leq |\alpha|$, and $\sigma > 1$.

It remains to show that for each $\tilde h>0$ there exists $C>0$ such that
\begin{equation}
\label{productestimate}
\prod_{k=1}^{s}\Big(\frac{|\partial^{p_k} \phi(x)| }{p_k!}\Big)^{m_k}
\leq  C \tilde h ^{|\alpha|^{\sigma}} M^{\tau,\sigma} _{|\alpha|}.
\end{equation}
This can be done by induction with respect to the length of the multi-index $\alpha \in \mathbb N^d$. The proof for  $|\alpha| = 1$ is the same as in $d=1$.
Now, if \eqref{productestimate} holds for  $ |\alpha| <n$, the case $ |\alpha| = n$, follows from the induction step and Proposition \ref{propalgebra}. We omit details.
\end{proof}

\begin{Theorem}
\label{TeoremaKompozicija}
Let $\tau>0$, $\sigma>1$. If $f\in \mathcal E_{\tau,\sigma}(\mathbb R)$ and $g\in \mathcal E_{\tau,\sigma}(\mathbb R^d)$ is such that $g: \mathbb R^d\to \mathbb R$, then $f\circ g\in \mathcal E_{\tau,\sigma}(\mathbb R^d)$.
\end{Theorem}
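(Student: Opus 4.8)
The plan is to prove the composition theorem $f \circ g \in \mathcal E_{\tau,\sigma}(\mathbb R^d)$ by applying the generalized Fa\`a di Bruno formula \eqref{GeneralizedFaaDiBruno} directly to $\partial^\alpha(f \circ g)$ and estimating each factor using the extended Gevrey bounds on $f$ and $g$ together with the structural properties $(M.1)$ and $\widetilde{(M.2)'}$ of the sequence $(M_p^{\tau,\sigma})$. The essential point is that the derivative count in a decomposition $(s,p,m)\in\pi$ is controlled: in \eqref{oblikParticijaFaa} we have $s\le|\alpha|$, $m=m_1+\dots+m_s\le|\alpha|$, and $\sum_{k=1}^s m_k|p_k|=|\alpha|$, so the total order of differentiation distributed among the inner factors is exactly $|\alpha|$, while the order $m$ of the outer derivative $f^{(m)}$ is at most $|\alpha|$. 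This last inequality is what keeps the composition inside the class rather than pushing it to a worse sequence.

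First I would fix a compact $K\subset\subset\mathbb R^d$ and note that $g(K)$ is a compact subset of $\mathbb R$, so the bounds on $f$ hold on $g(K)$. In the Roumieu case there are $h,H>0$ with $|f^{(m)}(g(x))|\le C\,H^{m^\sigma}M_m^{\tau,\sigma}$ and $|\partial^{p_k}g(x)|\le C\,h^{|p_k|^\sigma}M_{|p_k|}^{\tau,\sigma}$ for $x\in K$; in the Beurling case these hold for every $h,H>0$. Substituting into \eqref{GeneralizedFaaDiBruno} and writing $\big(\tfrac{1}{p_k!}\partial^{p_k}g\big)^{m_k}$, I would bound each inner product factor, absorb the combinatorial prefactor $\tfrac{\alpha!}{\prod m_k!}$ together with the reciprocal factorials $\tfrac{1}{(p_k!)^{m_k}}$ using $\alpha!\le|\alpha|!$ and the multinomial identity, and use ${\rm card}\,\pi\le(1+|\alpha|)^{d+2}$ to control the sum. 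The polynomial factor $(1+|\alpha|)^{d+2}$ is harmless since it is dominated by any $\varepsilon^{|\alpha|^\sigma}$ for $\sigma>1$.

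The heart of the estimate is to show that the product of sequence values recombines into a single $M_{|\alpha|}^{\tau,\sigma}$, up to a factor of the form $C^{|\alpha|^\sigma}$ that can be made small. Using $(M.1)$ in the form $M_a^{\tau,\sigma}M_b^{\tau,\sigma}\le M_{a+b}^{\tau,\sigma}$ repeatedly, one collapses $\prod_k (M_{|p_k|}^{\tau,\sigma})^{m_k} \le M_{\sum_k m_k|p_k|}^{\tau,\sigma}=M_{|\alpha|}^{\tau,\sigma}$, exactly because the weighted parts sum to $|\alpha|$. Then the outer factor $M_m^{\tau,\sigma}$ with $m\le|\alpha|$ must be reabsorbed: since $M_m^{\tau,\sigma}\le M_{|\alpha|}^{\tau,\sigma}$ one gets a product $M_m^{\tau,\sigma}M_{|\alpha|}^{\tau,\sigma}$ which is $(M_{|\alpha|}^{\tau,\sigma})^2\le M_{|\alpha|-1}^{\tau,\sigma}M_{|\alpha|+1}^{\tau,\sigma}$, and here $\widetilde{(M.2)'}$ is invoked to trade the extra index shift for a factor $C^{|\alpha|^\sigma}$. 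Collecting the exponential-in-$|\alpha|^\sigma$ factors $H^{m^\sigma}$, $h^{\sum|p_k|^\sigma m_k}$, the combinatorial $s^m\le|\alpha|^{|\alpha|}$ type terms, and the $C^{|\alpha|^\sigma}$ from $\widetilde{(M.2)'}$, everything consolidates into a bound of the shape $C\,\tilde h^{|\alpha|^\sigma}M_{|\alpha|}^{\tau,\sigma}$, where in the Roumieu case $\tilde h$ is some fixed constant and in the Beurling case $\tilde h$ can be made arbitrarily small by first choosing $h,H$ small.

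The main obstacle will be the bookkeeping of the exponents in $h^{|p_k|^\sigma}$ versus the target exponent $|\alpha|^\sigma$: because $\sigma>1$ the map $p\mapsto p^\sigma$ is superadditive, so one cannot simply write $\sum_k m_k|p_k|^\sigma\le|\alpha|^\sigma$ term by term, and the naive estimate goes the wrong way. The correct treatment is to bound each $|p_k|^\sigma\le|\alpha|^\sigma$ and control the number of factors via $\sum m_k\le|\alpha|$, so that $\prod_k h^{|p_k|^\sigma m_k}$ contributes at worst $h^{c|\alpha|\cdot|\alpha|^\sigma}$-type terms which, together with the $s^m$ combinatorial factor, are reorganized as $(C^{|\alpha|})^{|\alpha|^\sigma}$; choosing the base constants small then wins. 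This is precisely the same mechanism that appears in the inverse-closedness argument (Theorem \ref{teoremaInverseClosedness}), where $s,m\le|\alpha|$ and $\sigma>1$ convert polynomially many factors into an acceptable $C^{|\alpha|^\sigma}$, so I would model the estimate on that proof and on the algebra property in Proposition \ref{propalgebra}. As in those results, a clean induction on $|\alpha|$ combined with Proposition \ref{propalgebra} can replace the explicit Fa\`a di Bruno bookkeeping if the direct estimate becomes unwieldy.
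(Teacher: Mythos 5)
There is a genuine gap, and it sits exactly at what you call the heart of the estimate. After collapsing the inner product via $M_a^{\tau,\sigma}M_b^{\tau,\sigma}\le M_{a+b}^{\tau,\sigma}$ you hold $M_m^{\tau,\sigma}\,M_{|\alpha|}^{\tau,\sigma}$, and your proposed absorption of the outer factor fails: writing $n=|\alpha|$, the chain $(M_n^{\tau,\sigma})^2\le M_{n-1}^{\tau,\sigma}M_{n+1}^{\tau,\sigma}$ followed by $\widetilde{(M.2)'}$ only yields $(M_n^{\tau,\sigma})^2\le C^{n^\sigma}M_{n-1}^{\tau,\sigma}M_n^{\tau,\sigma}$ --- still a product of two sequence factors, nothing has been collapsed. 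And no device of this kind can work, because $(M_n^{\tau,\sigma})^2=M_n^{2\tau,\sigma}$ and $M_n^{2\tau,\sigma}/M_n^{\tau,\sigma}=n^{\tau n^\sigma}$ grows faster than $C^{n^\sigma}$ for every fixed $C$ (one would need $n^\tau\le C$ for all $n$). This is precisely the obstruction recorded in the Remark after Lemma \ref{osobineM_p_s}, where \eqref{naturalext2} is shown to be impossible: for these sequences the gap between $M_pM_q$ and $M_{p+q}$ is \emph{not} a factor $C^{p^\sigma+q^\sigma}$, so an index shift cannot be traded for $C^{|\alpha|^\sigma}$. Note also that your two crude bounds $M_m^{\tau,\sigma}\le M_n^{\tau,\sigma}$ and $\prod_k(M_{|p_k|}^{\tau,\sigma})^{m_k}\le M_n^{\tau,\sigma}$ cannot both be saturated by any single decomposition (large $m$ forces small parts $|p_k|$), so the loss is an artifact of decoupling them.

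The correct mechanism keeps them coupled and uses $M_1^{\tau,\sigma}=1$: by log-convexity, for a fixed sum $j+k$ the product $M_j^{\tau,\sigma}M_k^{\tau,\sigma}$ is maximized at the extreme admissible pair, so for $a,b\ge 1$ one has $M_a^{\tau,\sigma}M_b^{\tau,\sigma}\le M_1^{\tau,\sigma}M_{a+b-1}^{\tau,\sigma}=M_{a+b-1}^{\tau,\sigma}$. Merging the $m$ inner factors one at a time gives $\prod_k (M_{|p_k|}^{\tau,\sigma})^{m_k}\le M_{n-m+1}^{\tau,\sigma}$, and one further application absorbs the outer derivative exactly: $M_m^{\tau,\sigma}M_{n-m+1}^{\tau,\sigma}\le M_1^{\tau,\sigma}M_n^{\tau,\sigma}=M_n^{\tau,\sigma}$, with no exponential loss and no appeal to $\widetilde{(M.2)'}$. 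Two further remarks. First, the survey itself gives no proof of Theorem \ref{TeoremaKompozicija}; it cites \cite{PTT-02} for the Roumieu case, so the model you should consult is that paper, not the proof of Theorem \ref{teoremaInverseClosedness}: the inverse-closedness argument is genuinely easier because there the outer function is $t\mapsto 1/t$, whose $m$-th derivative contributes only $m!\,c^{-m-1}\le C^{|\alpha|^\sigma+1}$, absorbable into the prefactor, whereas here the outer factor is the full $M_m^{\tau,\sigma}$ --- exactly the term your plan has no valid tool for. Second, your Beurling bookkeeping of the $h$-powers is also too optimistic: the available exponent $m^\sigma+\sum_k m_k|p_k|^\sigma$ can be as small as $\asymp |\alpha|^{\sigma^2/(2\sigma-1)}=o(|\alpha|^\sigma)$ (take $\sigma=2$, $m\asymp|\alpha|^{2/3}$ and all $|p_k|\asymp|\alpha|^{1/3}$), so choosing $h,H$ small cannot by itself manufacture $\tilde h^{|\alpha|^\sigma}$; for such intermediate decompositions one must additionally harvest the strict surplus left over in the recombination $M_m^{\tau,\sigma}M_{n-m+1}^{\tau,\sigma}\ll M_n^{\tau,\sigma}$, which dominates the shortfall.
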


The proof of Theorem \ref{TeoremaKompozicija} for (the Roumieu case) can be found in \cite{PTT-02}.
Theorem \ref{teoremaInverseClosedness} is a consequence of Theorem \ref{TeoremaKompozicija},
but, as we see, it can be proved independently.

\subsection{Paley-Wiener theorems} \label{subsec:peliviner}

Let $\mathcal E_{(\sigma)}(\mathbb R^d)=\displaystyle\bigcap_{\tau>0}\mathcal E_{(\tau,\sigma)}(\mathbb R^d)$ and let
$\mathcal D_{(\sigma)}(\mathbb R^d)$ denote the set of
compactly supported elements from $\mathcal E_{(\sigma)}(\mathbb R^d).$

A more general statement than Proposition \ref{NASPW} is given in \cite[Theorem 3.1]{PTT-03}.

\begin{Proposition} \label{NASPW}
Let $\sigma > 1$, and let $f\in  \mathcal D_{( \sigma ) } ({\mathbb R^d}).$
Then $\widehat{f}$, the Fourier transform of  $f$, is analytic function, and for every $h>0$ there exists a constant $C_{h}>0$ such that
\begin{align}
\label{ocenaTeorema1}
|\widehat{f}(\xi)| \leq C_{h} \,  \exp\left\{ - h \left (
 \ln^{\frac{\sigma}{\sigma-1}}(|\xi|) / W^{\frac{1}{\sigma-1}}  (\ln(|\xi|))
\right ) \right\}, \quad |\xi| \quad \text{large enough},
\end{align}
where $W$ denotes the Lambert function.
\end{Proposition}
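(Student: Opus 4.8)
The plan is to combine three ingredients: the classical Paley--Wiener theorem for the analyticity claim, the standard exchange between the Fourier transform and derivatives for the decay, and the precise asymptotics of the associated function from Theorem \ref{TeoremaAsocirana}. Since $f$ has compact support and is smooth, $\widehat f$ is the restriction to $\mathbb R^d$ of an entire function on $\mathbb C^d$, which settles the analyticity at once. Everything else concerns the pointwise bound on $\mathbb R^d$.

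For the decay I would first exploit that $f\in\mathcal D_{(\sigma)}(\mathbb R^d)$, i.e. $\supp f$ is compact and $f\in\mathcal E_{(\tau,\sigma)}(\mathbb R^d)$ for \emph{every} $\tau>0$. Fixing a compact $K\supset\supp f$, the Beurling estimate \eqref{NewClassesProj} with $h=1$ gives, for each $\tau>0$, a constant $C_\tau>0$ with $|D^\alpha f(x)|\le C_\tau M^{\tau,\sigma}_{|\alpha|}$ for all $\alpha$ and $x$. Using $|\xi^\alpha|\,|\widehat f(\xi)|=|\widehat{D^\alpha f}(\xi)|\le\|D^\alpha f\|_{L^1}\le |K|\,C_\tau M^{\tau,\sigma}_{|\alpha|}$ and choosing $\alpha=p\,e_k$ with $|\xi_k|=\max_j|\xi_j|\ge|\xi|/\sqrt d$, I obtain $|\widehat f(\xi)|\le |K|\,C_\tau\,M^{\tau,\sigma}_p/(|\xi|/\sqrt d)^{p}$ for every $p\in\mathbb N$. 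Taking the infimum over $p$ and recalling the definition \eqref{associated-Katz} of the Carleman associated function together with the identity \eqref{Katz-Komatsu} of Lemma \ref{lema:katz-komatsu} yields
\begin{equation*}
|\widehat f(\xi)|\le |K|\,C_\tau\inf_{p\in\mathbb N}\frac{M^{\tau,\sigma}_p}{(|\xi|/\sqrt d)^{p}}=|K|\,C_\tau\,e^{-T_{\tau,\sigma}(|\xi|/\sqrt d)}.
\end{equation*}

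The heart of the matter is converting this into the stated bound. Applying the lower estimate in \eqref{KonacnaocenaAsocirana} of Theorem \ref{TeoremaAsocirana} gives, for $|\xi|$ large,
\begin{equation*}
T_{\tau,\sigma}(|\xi|/\sqrt d)\ge B_\sigma\,\tau^{-\frac{1}{\sigma-1}}\,\frac{\ln^{\frac{\sigma}{\sigma-1}}(|\xi|/\sqrt d)}{W^{\frac{1}{\sigma-1}}(\ln(|\xi|/\sqrt d))}.
\end{equation*}
Since $\ln(|\xi|/\sqrt d)\sim\ln|\xi|$ and $W(Cx)\sim W(x)$ as $x\to\infty$, the quotient on the right is asymptotically equivalent to $\Phi(\xi):=\ln^{\frac{\sigma}{\sigma-1}}(|\xi|)/W^{\frac{1}{\sigma-1}}(\ln|\xi|)$, hence bounded below by $\tfrac12\Phi(\xi)$ for $|\xi|$ large enough. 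This gives $|\widehat f(\xi)|\le |K|\,C_\tau\,\exp\{-\tfrac12 B_\sigma\,\tau^{-1/(\sigma-1)}\Phi(\xi)\}$.

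Finally, given the target $h>0$, I would use the freedom in $\tau$: choose $\tau>0$ small enough that $\tfrac12 B_\sigma\,\tau^{-1/(\sigma-1)}\ge h$, and set $C_h:=|K|\,C_\tau$ for this fixed $\tau$; this reproduces \eqref{ocenaTeorema1}. The main obstacle, and the reason the Beurling intersection $\bigcap_{\tau>0}$ is indispensable, is exactly this trade-off: the decay coefficient furnished by the associated function carries the factor $\tau^{-1/(\sigma-1)}$, so an arbitrarily large prescribed $h$ is reached only by letting $\tau\to0^+$, at the price of a larger (but, for each fixed $h$, finite) constant $C_\tau$. A secondary point requiring care is the optimization step, where one must note that for large argument the infimum over $p$ is attained at large $p$, so it genuinely equals $e^{-T_{\tau,\sigma}}$ and the $p=0$ term plays no role.
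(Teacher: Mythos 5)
Your proof is correct and follows essentially the same route as the paper's: exchange derivatives with the Fourier transform, bound $|\widehat f(\xi)|$ by $e^{-T_{\tau,\sigma}(|\xi|)}$ via the associated function, invoke the lower estimate \eqref{KonacnaocenaAsocirana} from Theorem \ref{TeoremaAsocirana}, and finally tune $\tau$ as a function of the prescribed $h$ (the paper sets $\tau=(B_\sigma/h)^{\sigma-1}$, you choose $\tau$ with $\tfrac12 B_\sigma\tau^{-1/(\sigma-1)}\ge h$). If anything, you are more careful than the paper on two elided details: the dimensional loss $|\xi|/\sqrt d$ handled by the max-coordinate choice $\alpha=p\,e_k$ (the paper passes directly to $\inf_\alpha|\alpha|^{\tau|\alpha|^\sigma}/|\xi|^{|\alpha|}$ without comment) and the irrelevance of the $p=0$ term in identifying the infimum with $e^{-T_{\tau,\sigma}}$.
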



\begin{proof} The analyticity of $f$ follows from the classical Paley-Wiener theorem, cf. \cite{H}. It remains to prove \eqref{ocenaTeorema1}.

Let  $f\in \, \mathcal D_{(\sigma)} ({\mathbb R^d})$, and let $K$ denote the support of $f$.
Since $f\in {\mathcal D_{\frac{\tau}{2}, \sigma}}({\mathbb R^d})$, by
Definition \ref{def:extendedGervey} for every
$ \alpha\in \mathbb N^d$ we get the following estimate:
\begin{equation*}
    |\xi^\alpha \widehat f(\xi)|=  |\widehat{D^{\alpha} f}(\xi)|\leq C   \sup_{x\in K}|{D^{\alpha} f}(x)|\leq C^{|\alpha|^\sigma +1}_1 |\alpha|^{\frac{\tau}{2} |\alpha|^\sigma}\leq C_2 |\alpha|^{\tau |\alpha|^{\sigma}},\, \xi\in \mathbb R^d,
\end{equation*}
for a suitable constant $C_2>0$. Now, the relation between the sequence $(M_p^{\tau,\sigma})$ and its associated function
$ T_{\tau,\sigma}$ given by \eqref{asocirana} implies that
\begin{equation*}
\label{PWsaT}
|\widehat f(\xi)|
\leq C_2 \inf_{\alpha\in \mathbb N^d}\frac{|\alpha|^{\tau |\alpha|^\sigma}}{|\xi|^{|{\alpha}|}} \leq  C_3 e^{- T_{\tau,\sigma}(|\xi|)},\quad \quad |\xi| \quad \text{large enough},
\end{equation*} for suitable $C_3>0$.
Then, from the left-hand side of \eqref{KonacnaocenaAsocirana} we get
\begin{equation*}
    |\widehat f(\xi)| \leq C_4 \exp\left\{- B_\sigma\,\tau^{-\frac{1}{\sigma-1}}
\left ( \ln^{\frac{\sigma}{\sigma-1}}(|\xi|) / W^{\frac{1}{\sigma-1}}  (\ln(|\xi|)) \right ) \right\},
\quad |\xi| \quad \text{large enough},
\end{equation*}
with $\displaystyle C_4 =  C_3 e^{-\widetilde{B}_{\tau,\sigma}} $. For any given $h>0$ we choose
$ \tau=(B_{\sigma}/h)^{\sigma-1}$, to obtain \eqref{ocenaTeorema1}, which proves the claim.
\end{proof}

We proceed with the Paley-Wiener theorem for $u\in\mathcal E'_{(\sigma)}(\mathbb R^d)$.
%
%

\begin{Theorem} \label{Thm:Peli-Viner}
Let $\sigma>1$.

\begin{enumerate}

\item[i)] If $u\in\mathcal E'_{(\sigma)}(\mathbb R^d)$ then there exist constants $h, C>0$ such that

\begin{equation*}
|\widehat u (\xi)|\leq C\exp\left\{h \Big(\frac{\ln^\sigma |\xi|}{
W (\ln(|\xi|))}\Big)^{\frac{1}{\sigma-1}}\right\},\quad \textit{for $|\xi|$ large enough}.
\end{equation*}

\item[ii)] If $u\in\mathcal E'_{(\sigma)}(\mathbb R^d)$ and if for every $h>0$ there exists $C>0$ such that
\begin{equation}
\label{NejednakostPaley}
|\widehat u (\xi)|\leq C\exp\left\{-h \Big(\frac{\ln^\sigma (|\xi|)}{ W (\ln(|\xi|))}\Big)^{\frac{1}{\sigma-1}}\right\},\quad \textit{for $|\xi|$ large enough},
\end{equation} then $u\in \mathcal E_{(\sigma)}(\mathbb R^d)$.
\end{enumerate}
\end{Theorem}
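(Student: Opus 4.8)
The plan is to treat the two implications separately, in both cases translating between the Lambert-type expression appearing in the statement and the associated function $T_{\tau,\sigma}$ from \eqref{asocirana}, and then invoking the sharp two-sided estimate \eqref{KonacnaocenaAsocirana} of Theorem \ref{TeoremaAsocirana}. Throughout I write $E(\xi):=\bigl(\ln^{\sigma}|\xi|/W(\ln|\xi|)\bigr)^{1/(\sigma-1)}=\ln^{\sigma/(\sigma-1)}|\xi|\,/\,W^{1/(\sigma-1)}(\ln|\xi|)$ for the quantity occurring on the right-hand sides, so that \eqref{KonacnaocenaAsocirana} reads $B_\sigma\tau^{-1/(\sigma-1)}E(\xi)+\widetilde B_{\tau,\sigma}\le T_{\tau,\sigma}(|\xi|)\le A_\sigma\tau^{-1/(\sigma-1)}E(\xi)+\widetilde A_{\tau,\sigma}$.

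For part i) I would first use that, since $\mathcal E_{(\sigma)}(\mathbb R^d)$ is the projective limit of the spaces $\mathcal E_{(\tau,\sigma)}(\mathbb R^d)$, every $u\in\mathcal E'_{(\sigma)}(\mathbb R^d)$ is compactly supported and lies in $\mathcal E'_{(\tau,\sigma)}(\mathbb R^d)$ for some $\tau>0$; that is, \eqref{ultraNejednakost} holds with a compact $K$ and constants $\varepsilon,C>0$. As $e^{-2\pi i x\cdot\xi}$ (in $x$) is real-analytic and hence belongs to $\mathcal E_{(\sigma)}(\mathbb R^d)$, the Fourier transform is the pairing $\widehat u(\xi)=(u,e^{-2\pi i\,\cdot\,\xi})$, and from $D^{\alpha}_x e^{-2\pi i x\cdot\xi}=\xi^{\alpha}e^{-2\pi i x\cdot\xi}$ with $|e^{-2\pi i x\cdot\xi}|=1$ for real $\xi$, \eqref{ultraNejednakost} gives
\[
|\widehat u(\xi)|\le C\sup_{p\in\mathbb N_0}\frac{|\xi|^{p}}{\varepsilon^{p^{\sigma}}\,p^{\tau p^{\sigma}}}.
\]
The only nuisance is the factor $\varepsilon^{p^{\sigma}}$, which I would absorb by noting that for any $\tau''\in(0,\tau)$ there is $C_0\in(0,1]$ with $\varepsilon^{p^{\sigma}}p^{\tau p^{\sigma}}\ge C_0\,M_p^{\tau'',\sigma}$ for all $p$: the exponent $(\tau-\tau'')\ln p$ eventually dominates $\ln(1/\varepsilon)$, and the finitely many small $p$ are handled by shrinking $C_0$. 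Hence $|\widehat u(\xi)|\le (C/C_0)\,e^{T_{\tau'',\sigma}(|\xi|)}$ for $|\xi|$ large, and the upper bound in \eqref{KonacnaocenaAsocirana} yields the claimed growth with $h=A_\sigma(\tau'')^{-1/(\sigma-1)}$ (the additive constant $\widetilde A_{\tau'',\sigma}$ being swallowed into $C$).

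For part ii) the strategy is Fourier inversion, mirroring the converse of Proposition \ref{NASPW}. Since $\sigma/(\sigma-1)>1$, the function $E(\xi)$ grows faster than $\ln|\xi|$, so the hypothesis forces $\widehat u$ to decay faster than any inverse polynomial; thus $\widehat u\in L^1$, the function $v(x):=\int_{\mathbb R^d}\widehat u(\xi)e^{2\pi i x\cdot\xi}\,d\xi$ is smooth with $\partial^{\alpha}v(x)=\int(2\pi i\xi)^{\alpha}\widehat u(\xi)e^{2\pi i x\cdot\xi}\,d\xi$, and since $u\in\mathcal E'_{(\sigma)}\subset\mathcal S'$, Fourier inversion identifies $u=v$. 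It then remains to estimate $\int|\xi|^{|\alpha|}|\widehat u(\xi)|\,d\xi$. Fixing targets $\tau>0$ and $k>0$, I choose $\tau'\in(0,\tau)$ and apply the hypothesis with $h:=A_\sigma(\tau')^{-1/(\sigma-1)}$; the upper bound of \eqref{KonacnaocenaAsocirana} then gives $e^{-hE(\xi)}\le C'e^{-T_{\tau',\sigma}(|\xi|)}$, and the infimum representation $e^{-T_{\tau',\sigma}(|\xi|)}=\inf_{p}M_p^{\tau',\sigma}|\xi|^{-p}\le M_{|\alpha|+d+1}^{\tau',\sigma}|\xi|^{-|\alpha|-d-1}$ (from Lemma \ref{lema:katz-komatsu}) lets me integrate over $|\xi|>1$ to obtain $\int_{|\xi|>1}|\xi|^{|\alpha|}e^{-T_{\tau',\sigma}(|\xi|)}\,d\xi\le C_d\,M_{|\alpha|+d+1}^{\tau',\sigma}$.

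Finally I would clean up the constants. Iterating property $\widetilde{(M.2)'}$ of Lemma \ref{osobineM_p_s} $d+1$ times gives $M_{|\alpha|+d+1}^{\tau',\sigma}\le \widetilde C^{|\alpha|^{\sigma}}M_{|\alpha|}^{\tau',\sigma}$, while $(2\pi)^{|\alpha|}\le(2\pi)^{|\alpha|^{\sigma}}$, so that $|\partial^{\alpha}u(x)|\le C\,(2\pi\widetilde C)^{|\alpha|^{\sigma}}M_{|\alpha|}^{\tau',\sigma}$. Because $\tau'<\tau$, the elementary inequality $M_{|\alpha|}^{\tau',\sigma}=|\alpha|^{\tau'|\alpha|^{\sigma}}\le C_{k_1}\,k_1^{|\alpha|^{\sigma}}M_{|\alpha|}^{\tau,\sigma}$, valid for every $k_1>0$ since $(\tau'-\tau)\ln|\alpha|\to-\infty$, simultaneously absorbs the base $2\pi\widetilde C$ and produces the arbitrary Beurling constant $k$. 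Hence $u\in\mathcal E_{(\tau,\sigma)}(\mathbb R^d)$ for every $\tau>0$, i.e. $u\in\mathcal E_{(\sigma)}(\mathbb R^d)$. I expect the main obstacle to be precisely this bookkeeping in part ii): keeping the decay rate $h$, the sequence parameter $\tau'$, the index shift $d+1$, and the target pair $(\tau,k)$ mutually consistent so that the final estimate holds for \emph{all} $\tau>0$ and \emph{all} $k>0$, together with the careful justification that the inverse Fourier integral genuinely represents $u$ and may be differentiated under the integral sign.
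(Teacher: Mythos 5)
Your proposal is correct and follows essentially the same route as the paper: part i) pairs $u$ with the exponentials $e^{-2\pi i x\cdot\xi}$ and absorbs the factor $\varepsilon^{|\alpha|^{\sigma}}$ by lowering the parameter $\tau$, and part ii) uses Fourier inversion together with the two-sided estimate \eqref{KonacnaocenaAsocirana} to reduce everything to the associated-function bound $\sup_{\xi}|\xi|^{|\alpha|}e^{-T_{\tau,\sigma}(|\xi|)}\leq M^{\tau,\sigma}_{|\alpha|}$. The only (harmless) difference is bookkeeping in ii): the paper splits the exponential decay into two equal factors --- one producing the supremum $\sup_{\xi}|\xi|^{|\alpha|}e^{-T_{\tau,\sigma}(1+|\xi|)}$ and one guaranteeing integrability --- thereby avoiding your index shift to $|\alpha|+d+1$ and the iteration of $\widetilde{(M.2)'}$, while your final absorption via $\tau'<\tau$ plays exactly the role of the paper's stronger intermediate estimate \eqref{strongerestimate}.
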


\begin{proof}
$i)$ Fix $\tau_0>0$ so that $u\in \mathcal E' _{(2\tau_0,\sigma)}(\mathbb R^d)$. By applying \eqref{ultraNejednakost} to $\varphi_\xi (x)=e^{-2\pi i x\cdot\xi} \in \mathcal E_{(\sigma)}(\mathbb R^d)$, $\xi\in \mathbb R^d$, we get
\begin{multline*}
|\widehat u (\xi)|=|( u, e^{-2\pi i \cdot \xi})|
\leq C \sup_{\alpha\in \mathbb N^d}\sup_{x\in K}
\frac{|D^{\alpha}(e^{-2\pi i x\cdot\xi})|}{\varepsilon^{|\alpha|^{\sigma}} |\alpha|^{2\tau_0 |\alpha|^{\sigma}}} \\
\leq C \sup_{\alpha\in \mathbb N^d}\frac{|\xi|^{|\alpha|}}{ \varepsilon^{|\alpha|^{\sigma}}|\alpha|^{2\tau_0 |\alpha|^{\sigma}}}\leq C_1 \sup_{\alpha\in \mathbb N^d}\frac{|\xi|^{|\alpha|}}{ |\alpha|^{\tau_0 |\alpha|^{\sigma}}}=C_1\exp\{ T_{\tau_0,\sigma}(|\xi|)\},\quad \xi\in\mathbb R^d,
\end{multline*}
where we have used simple inequalities $|\xi^{\alpha}|\leq |\xi|^{|\alpha|}$ and $\varepsilon^{|\alpha|^{\sigma}}|\alpha|^{2\tau_0 |\alpha|^{\sigma}}\geq C' |\alpha|^{\tau_0 |\alpha|^{\sigma}}$ for suitable $C'>0$. Now the the statement follows from \eqref{KonacnaocenaAsocirana}.

$ii)$ 
It is sufficient to prove that for every $\tau>0$ there exists constant $C>0$ such that
\begin{equation} \label{strongerestimate}
    \sup_{x\in  \mathbb R^d}|D^{\alpha}u(x)|\leq C |\alpha|^{\tau |\alpha|^{\sigma}},
\qquad \alpha\in \mathbb N^d.
\end{equation}

Notice that $h>0$ large enough in \eqref{KonacnaocenaAsocirana} can be replaced by
$(1+|\xi|)$ for all $\xi \in \mathbb R^d$.

For arbitrary $\tau>0$, we take $A_{\sigma}$ be as in \eqref{KonacnaocenaAsocirana}, and
set $h= 2 A_{\sigma} \tau^{-\frac{1}{\sigma-1}}$ in \eqref{NejednakostPaley}. Then the Fourier inversion formula, together with \eqref{KonacnaocenaAsocirana} and \eqref{NejednakostPaley}, implies

\begin{multline*}
    |D^{\alpha} u(x)| = \left|\int_{\mathbb R^d}\xi^{\alpha}\widehat u(\xi) e^{2\pi i x\xi}d\xi\right|  \\
  \leq C \int_{\mathbb R^d}|\xi|^{|\alpha|} \exp\left\{-2 A_{\sigma} \tau^{-\frac{1}{\sigma-1}} \Big(\frac{\ln^\sigma (1+ |\xi|)}{ W (\ln(1+|\xi|))}\Big)^{\frac{1}{\sigma-1}}\right\} d\xi\\
  \leq  C_1 \sup_{\xi \in \mathbb R^d} \Big(|\xi|^{|\alpha|}\exp\{-T_{\tau,\sigma}(1+|\xi|)\}\Big)\int_{\mathbb R^d} \exp\left\{-A_{\sigma}  \Big(\frac{\ln^\sigma (1+ |\xi|)}{  \tau W (\ln(1+|\xi|))}\Big)^{\frac{1}{\sigma-1}}\right\}d\xi\\
   \leq C_2  \sup_{\xi \in \mathbb R^d}\frac{|\xi|^{|\alpha|}}{\exp\{T_{\tau,\sigma}(|\xi|)\}},\quad \xi \in \mathbb R^d,
\end{multline*} for suitable $C_2>0$. Then \eqref{strongerestimate} follows from
\eqref{asocirana}.
\end{proof}


\section{Wave-front sets for extended Gevrey regularity} \label{sec:WFs}

\subsection{Wave-front set and singular support}

Wave-front sets measure different types of directional singularities. For example,
\begin{equation}
\label{wavefrontpodskup1}
\WF(u)\subsetneq \WF_t(u) \subsetneq \WF_A(u)\,, \;\;\; t>1,
\end{equation}
where $u \in \mathcal D'(\mathbb R^d)$, $\WF$ is the classical $(C^{\infty})$ wave-front set, $\WF_t$ is the Gevrey wave-front set, and $\WF_A$ is analytic wave-front set, we refer to \cite{Foland, H, Rodino} for precise definitions.

In this section we introduce wave-front sets which detect
singularities that are "stronger" then the classical $C^{\infty}$
singularities and "weaker" than any Gevrey singularity. Moreover,
the usual properties (such as pseudo-local property), which hold for
wave-front sets quoted in \eqref{wavefrontpodskup1}, are preserved when considering the new type of singularities.

For simplicity, here we consider wave-front sets ${\WF}_{\{\tau,\sigma\}}(u)$ in terms of extended Gevrey regularity of Roumieu type. Results on ${\WF}_{(\tau,\sigma)}(u)$ of Beurling type are analogous, cf. \cite[Remark 3.2]{PTT-02}.

\par

\begin{Definition}
\label{Wf_t_s1}
Let $u\in \mathcal D'(\mathbb R^d)$, $\tau>0$, $\sigma>1$, and $(x_0,\xi_0)\in \mathbb R^d \times\mathbb R^d\backslash\{0\}$. Then $(x_0,\xi_0)\not \in {\WF}_{\{\tau,\sigma\}}(u)$ if there exists
a conic neighborhood $\Gamma_0$ of $\xi_0$, a compact set $ K\subset\subset \mathbb R^d $, and
$\phi\in \mathcal D_{\{\tau,\sigma\}} (\mathbb R^d) $, $\supp \phi = K$,  $\phi (x_0)  \neq 0$, and such that
\begin{equation}
\label{WFset-definition}
|\widehat{\phi u}(\xi)|\leq C \frac{h^{N^{\sigma}} N^{\tau N^{\sigma}}}{|\xi|^N},\quad N\in {\mathbb N}\,,\xi\in \Gamma_0\,,
\end{equation}
for some $h>0$ and $C>0$.
\end{Definition}

Definition \ref{Wf_t_s1} does not depend on the choice of the cut-off function
$\phi\in \mathcal D_{\{\tau,\sigma\}} (\mathbb R^d) $ with given properties. We refer to
\cite[Theorem 4.2]{PTT-03} for the proof of such independence, and note that the inverse closedness property of $\mathcal E_{\{\tau,\sigma\}} (\mathbb R^d)$ (Theorem \ref{teoremaInverseClosedness}) is used in the proof. Thus, $(x_0,\xi_0)\not \in {\WF}_{\{\tau,\sigma\}}(u)$ if  \eqref{WFset-definition} holds {\em
for all} $\phi\in \mathcal D_{\{\tau,\sigma\}} (\mathbb R^d) $, $\supp \phi = K$,  $\phi (x_0)  \neq 0$, and sometimes it is convenient to assume that  $\phi (x_0)  \equiv 1$ in a neighboorhood of $x_0 \in  \mathbb R^d $,
cf. \cite{Rodino}.

Let $u\in \mathcal D'(\mathbb R^d)$. Then ${\WF}_{\{\tau,\sigma\}}(u)$
is a closed subset of $\mathbb R^d \times\mathbb R^d\backslash\{0\}$, and for every $\tau>0$ and $\sigma>1$ we have
\begin{equation*}
    \WF(u)\subsetneq {\WF}_{\{\tau,\sigma\}}(u)\subsetneq \WF_{t}(u) \subsetneq{\WF}_A (u).
\end{equation*}

The singular support of a  distribution  $u\in \mathcal D'(\mathbb R^d)$  with respect to  extended Gevrey regularity is the complement of the set of points in which $u$ locally belongs to
$\mathcal E_{\{\tau,\sigma\}} (\mathbb R^d)$:

\par

\begin{Definition}
\label{DefinicijaSingSup}
Let $\tau>0$, $\sigma>1$, and $u\in \mathcal  D'(\mathbb R^d)$.
Then $x_0\not \in \sing_{\{\tau,\sigma\}}(u)$ if and only if
there exists a neighborhood $\Omega$ of $x_0$ such that $u\in \mathcal E_{\{\tau,\sigma\}}(\Omega)$.
\end{Definition}

Here, $u\in \mathcal E_{\{\tau,\sigma\}}(\Omega)$ means that $u$ satisfies the conditions of  Definition \ref{def:extendedGervey}, i.e. \eqref{NewClassesInd},
with $\mathbb R^d$ replaced by its open subset $\Omega$ at each occurrence.

The next result is a consequence of Definition \ref{Wf_t_s1} and \ref{DefinicijaSingSup}, we refer to \cite{PTT-02} for the proof.

\begin{Theorem}
\label{SingularSupportTheorema}
Let $\tau>0$, $\sigma>1$, $u\in \mathcal D'(\mathbb R^d)$, and let
$\pi_1:\mathbb R^d \times \mathbb R^d\backslash \{0\}\to \mathbb R^d$ be the standard projection given by $\pi_1(x,\xi)=x$. Then
\begin{equation*}
     \sing_{\{\tau,\sigma\}}(u)=\pi_1(\WF_{\{\tau,\sigma\}}(u)).
\end{equation*}
\end{Theorem}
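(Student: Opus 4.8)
The goal is to prove the set equality
\begin{equation*}
\sing_{\{\tau,\sigma\}}(u) = \pi_1\big(\WF_{\{\tau,\sigma\}}(u)\big),
\end{equation*}
and the plan is to prove the two inclusions separately by contraposition on the complements. The natural strategy is to show that a point $x_0$ lies outside $\sing_{\{\tau,\sigma\}}(u)$ if and only if every direction $\xi_0 \neq 0$ satisfies $(x_0,\xi_0) \notin \WF_{\{\tau,\sigma\}}(u)$, which is exactly the statement that $x_0 \notin \pi_1(\WF_{\{\tau,\sigma\}}(u))$.

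For the inclusion $\pi_1(\WF_{\{\tau,\sigma\}}(u)) \subseteq \sing_{\{\tau,\sigma\}}(u)$, I would argue the contrapositive: suppose $x_0 \notin \sing_{\{\tau,\sigma\}}(u)$. By Definition \ref{DefinicijaSingSup} there is a neighborhood $\Omega$ of $x_0$ with $u \in \mathcal E_{\{\tau,\sigma\}}(\Omega)$. Choose a cut-off $\phi \in \mathcal D_{\{\tau,\sigma\}}(\mathbb R^d)$ supported in $\Omega$ with $\phi(x_0) \neq 0$; then $\phi u \in \mathcal D_{\{\tau,\sigma\}}(\mathbb R^d)$ because $\mathcal E_{\{\tau,\sigma\}}$ is closed under multiplication by test functions (the algebra property, Proposition \ref{propalgebra}). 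The estimate \eqref{NewClassesInd} for $\phi u$, combined with $|\xi^\alpha \widehat{\phi u}(\xi)| = |\widehat{D^\alpha(\phi u)}(\xi)| \leq \sup |D^\alpha(\phi u)|$, yields the global decay bound \eqref{WFset-definition} in \emph{all} directions $\xi \in \mathbb R^d \setminus \{0\}$, not merely in a cone. Hence $(x_0,\xi_0) \notin \WF_{\{\tau,\sigma\}}(u)$ for every $\xi_0$, so $x_0 \notin \pi_1(\WF_{\{\tau,\sigma\}}(u))$.

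The reverse inclusion is the substantive direction. Assume $x_0 \notin \pi_1(\WF_{\{\tau,\sigma\}}(u))$, so for every direction $\xi_0$ on the unit sphere we have $(x_0,\xi_0) \notin \WF_{\{\tau,\sigma\}}(u)$, giving a conic neighborhood $\Gamma_{\xi_0}$ and a cut-off $\phi_{\xi_0}$ on which \eqref{WFset-definition} holds. Since the unit sphere is compact, finitely many cones $\Gamma_1,\dots,\Gamma_m$ cover $\mathbb R^d \setminus \{0\}$; one selects a single cut-off $\phi$ with $\phi(x_0)\neq 0$ supported in the intersection of the supports (using the independence of Definition \ref{Wf_t_s1} on the choice of $\phi$) so that \eqref{WFset-definition} holds on each $\Gamma_j$ and hence on all of $\mathbb R^d\setminus\{0\}$. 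The task is then to convert the global decay estimate $|\widehat{\phi u}(\xi)| \leq C\, h^{N^\sigma} N^{\tau N^\sigma} |\xi|^{-N}$ back into the derivative estimate \eqref{NewClassesInd} for $\phi u$, thereby showing $u \in \mathcal E_{\{\tau,\sigma\}}(\Omega)$ on a neighborhood $\Omega$ of $x_0$ where $\phi \neq 0$, and invoking inverse closedness (Theorem \ref{teoremaInverseClosedness}) to transfer regularity from $\phi u$ to $u$ itself.

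The main obstacle is precisely this last conversion: recovering the two-parameter Roumieu bound \eqref{NewClassesInd} from Fourier decay. One optimizes the bound $|D^\alpha(\phi u)(x)| \leq \int |\xi|^{|\alpha|} |\widehat{\phi u}(\xi)|\, d\xi$ over the free index $N$ in the direction-independent decay estimate; balancing $|\xi|^{|\alpha|}$ against $h^{N^\sigma} N^{\tau N^\sigma} |\xi|^{-N}$ is exactly the Legendre-type optimization that relates the sequence $M^{\tau,\sigma}_{|\alpha|}$ to its associated function $T_{\tau,\sigma}$, so Theorem \ref{TeoremaAsocirana} and the estimates \eqref{KonacnaocenaAsocirana} govern the sharp constants. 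This is technical but routine given the machinery already developed; accordingly, the cleanest writeup defers the computation to the cited source, as in the statement. I would therefore present the two-cone-covering reduction explicitly and remark that the equivalence between decay of $\widehat{\phi u}$ and membership in $\mathcal E_{\{\tau,\sigma\}}$ follows from the associated-function estimates of Section \ref{sec1} together with Theorem \ref{teoremaInverseClosedness}, referring to \cite{PTT-02} for the detailed estimates.
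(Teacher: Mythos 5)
Your argument is correct and follows essentially the same route as the paper, which defers the detailed proof to \cite{PTT-02}: the easy inclusion via the algebra property (Proposition \ref{propalgebra}) and the Fourier decay of $\widehat{\phi u}$ in all directions, and the substantive inclusion via compactness of the unit sphere, the cutoff-independence of Definition \ref{Wf_t_s1}, and the associated-function conversion from decay back to derivative bounds, where $\widetilde{(M.2)'}$ absorbs the shift $|\alpha|\mapsto|\alpha|+d+1$ into the factor $h^{|\alpha|^{\sigma}}$ so that no loss in $\tau$ occurs. One small repair: Theorem \ref{teoremaInverseClosedness} is stated for functions nonvanishing on all of $\mathbb R^d$, so to divide by $\phi$ you should either invoke its local variant on a ball where $|\phi|\geq c>0$ (the proof is local, so this is immediate), or avoid the division altogether by taking $\phi\equiv 1$ near $x_0$ --- which the paper explicitly notes is permissible --- so that $u=\phi u$ on a neighborhood of $x_0$ and the regularity of $\phi u$ transfers to $u$ directly.
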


\subsection{Characterization of wave-front sets via the STFT}


For the estimates of the short-time Fourier transform it is convenient to consider the following refinement of the associated function  $T_{\tau,\sigma}$ (see   \eqref{asocirana}).

The \emph{two-parameter associated function}  $T_{\tau,\sigma}(h,k)$
to the sequence $M^{\tau,\sigma}_p=p^{\tau p^\sigma}$, $ p \in \mathbb{N}$,  $\tau>0$, $\sigma>1$ is given by
\begin{equation} \label{asociranaProduzena}
T_{\tau,\sigma}(h,k)= \sup_{p\in \mathbb N} \ln_+\frac{h^{p^{\sigma}}k^{p}}{M_p^{\tau,\sigma}}, \;\;\; k>0.
\end{equation}
When $h=1$ we recover  the associated function with the sequence $(M^{\tau,\sigma}_p)$, i.e.
\begin{equation*}
    T_{\tau,\sigma}(k) = T_{\tau,\sigma} (1,k), \quad k>0.
\end{equation*}

Sharp asymptotic estimates for $ T_{\tau,\sigma}(h,k)$ in terms
of the principal branch of the Lambert function are given in \cite{PTT-03}.


Here we recall \cite[Lemma 2]{TT0}, a simple result which relates $T_{\tau,\sigma}$ and $T_{\tau,\sigma}(h,\cdot)$, $h>0$.

\begin{Lemma}\label{OsobineAsocirane}
Let $T_{\tau,\sigma}(h,k)$ be given by \eqref{asociranaProduzena}, and let $T_{\tau,\sigma}(k)$
be given by \eqref{asocirana}.
Then for any given  $h>0$ and $\tau_2>\tau>\tau_1>0$ there exists $A,B\in {\mathbb R}$ such that
\begin{equation*}
    T_{\tau_2,\sigma}(k)+A \leq T_{\tau,\sigma}(h,k)\leq  T_{\tau_1,\sigma}(k)+B,\quad k>0.
\end{equation*}
\end{Lemma}

It is known that the classical wave-front set $ \WF(u) $ can be described by the means of the short-time Fourier transform, see \cite{PrangoskiPilip}. Related characterization of
${\WF}_{\{\tau,\sigma\}}(u)$ is given in \cite{TT-stft}. Here we provide a slightly different statement, and a more detailed proof.

Let there be given $f, g \in L^2 (\mathbb{R}^d).$ The short-time Fourier transform (STFT) of
$ f  $ with respect to the window $g$  is given by
\begin{equation*} \label{STFTdef}
V_g f (x,\xi) = \int e^{-2\pi i t \xi} f(t)  \overline{ g(t-x)}dt
= \langle f, M_\xi T_x g\rangle, \;\;\; x,\xi \in \mathbb{R}^d.
\end{equation*}
We observe that the definition of $V_g f $ makes sense when $f$ and $g$ belong to any pair of dual spaces, extending the inner product in $L^2 (\mathbb{R}^d)$ as it is mentioned in section \ref{notation}.

\par


We first observe that if $(x_0,\xi_0)\not \in {\WF}_{\{\tau,\sigma\}}(u)$ then
\begin{equation}
\label{WFsetinfimum}
|\widehat{\phi u}(\xi)|\leq C \inf_{N\in {\mathbb N}}
\frac{h^{N^{\sigma}} N^{\tau N^{\sigma}}}{|\xi|^N},\qquad \xi\in \Gamma_0,
\end{equation}
for some $h>0$, $C>0$, and $\phi$ satisfying the conditions of Definition  \ref{Wf_t_s1}.
By \eqref{asociranaProduzena} (and Lemma \ref{lema:katz-komatsu}) it follows that \eqref{WFsetinfimum} is equivalent to
\begin{equation}
\label{WFsetbyProduzena}
|\widehat{\phi u}(\xi)|\leq C e^{- T_{\tau,\sigma} ( \frac{1}{h},|\xi|)},\quad \xi\in \Gamma_0.
\end{equation}

Next  we resolve $\WF _{\{\tau,\sigma \} } (u)$ of $u\in \mathcal D'(\mathbb R^d)$  by considering the
asymptotic behavior of its STFT.

In the sequel $ \phi \in \mathcal D_{\{\tau,\sigma\}}  ^K (\mathbb R^d) $ means that
$ \phi \in \mathcal D_{\{\tau,\sigma\}}  (\mathbb R^d) $ and $\supp \phi  = K$.

\begin{Theorem}
\label{NezavisnostWFThm}
Let $u\in \mathcal D'(\mathbb R^d)$, and $\tau>0$, $\sigma>1$.
Then $(x_0,\xi_0)\not\in \WF_{\{\tau,\sigma\}}(u)$  if and only if
there exists a conic neighborhood  $\Gamma_0$ of $\xi_0$, a compact neighborhood
$ K $ of $x_0$, and
\begin{equation}
\label{cut-off-g}
g\in \mathcal D_{\{\tau,\sigma\}} ^{K_0} (\mathbb R^d), \qquad
K_{x_0}=\{y\in \mathbb R^d\,|\, y+x_0\in K\},
\qquad  g (0) \neq 0,
\end{equation}
such that
\begin{equation}
\label{TeoremaWFUslov2}
|V_{g} u(x,\xi)|\leq C e^{- T_{\tau,\sigma} ( k ,|\xi|) },\quad  x\in K,\,\xi\in \Gamma_0,
\end{equation}
for some $k>0$, and $ C>0$.
\end{Theorem}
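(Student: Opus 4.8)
The plan is to establish the equivalence by relating the short-time Fourier transform $V_g u(x,\xi)$ to the Fourier transform $\widehat{\phi u}(\xi)$ of a localized distribution, so that I can invoke the equivalent characterization \eqref{WFsetbyProduzena} of $(x_0,\xi_0)\not\in\WF_{\{\tau,\sigma\}}(u)$. The key observation is that for a fixed window $g$ and fixed $x$, the map $t\mapsto \overline{g(t-x)}$ is a cut-off function, so $V_g u(x,\xi)=\widehat{(T_x \bar g)\, u}(\xi)$ up to the usual modulation conventions. Thus the STFT is nothing but a family (parametrized by $x$) of windowed Fourier transforms, and the estimate \eqref{TeoremaWFUslov2} on a compact set $K$ in $x$ amounts to a uniform version of \eqref{WFsetbyProduzena}.

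For the forward direction, suppose $(x_0,\xi_0)\not\in\WF_{\{\tau,\sigma\}}(u)$. By Definition \ref{Wf_t_s1} and the independence of the cut-off function, I may fix a conic neighborhood $\Gamma_0$ of $\xi_0$ and a compact neighborhood $K$ of $x_0$ on which \eqref{WFsetbyProduzena} holds for every admissible cut-off. I would then choose $g\in\mathcal D_{\{\tau,\sigma\}}^{K_{x_0}}(\mathbb R^d)$ with $g(0)\neq 0$ as in \eqref{cut-off-g}; for each $x\in K$ the translated window $T_x\bar g$ is supported in a fixed compact set and is an admissible localizer at points of $K$, so applying \eqref{WFsetbyProduzena} to $\phi=T_x \bar g$ yields $|V_g u(x,\xi)|\le C\,e^{-T_{\tau,\sigma}(1/h,|\xi|)}$ with constants uniform in $x\in K$. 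Setting $k=1/h$ gives \eqref{TeoremaWFUslov2}. The uniformity in $x$ over the compact set $K$ is the point where I must be slightly careful, since the constant $C$ and parameter $h$ must not degenerate as $x$ ranges over $K$; this follows because the family $\{T_x\bar g : x\in K\}$ is a bounded set in $\mathcal D_{\{\tau,\sigma\}}$ with supports in a common compact set.

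For the converse, suppose \eqref{TeoremaWFUslov2} holds with some window $g$, conic neighborhood $\Gamma_0$, and compact neighborhood $K$ of $x_0$. I would reconstruct a cut-off of the type required by Definition \ref{Wf_t_s1} from the window $g$: fixing $x=x_0$ (or integrating against a suitable weight supported near $x_0$) produces a localizer $\phi\in\mathcal D_{\{\tau,\sigma\}}$ with $\phi(x_0)\neq 0$ whose windowed transform is controlled by \eqref{TeoremaWFUslov2}, which via \eqref{WFsetbyProduzena} gives exactly the decay in \eqref{WFset-definition}. Here the inverse closedness and algebra properties of $\mathcal E_{\{\tau,\sigma\}}(\mathbb R^d)$ (Proposition \ref{propalgebra} and Theorem \ref{teoremaInverseClosedness}) guarantee that any manipulations producing the required cut-off stay within the class, and the independence of $\WF_{\{\tau,\sigma\}}(u)$ from the choice of cut-off function (established via \cite[Theorem 4.2]{PTT-03}) lets me pass freely between $g$ and $\phi$.

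The main obstacle will be the converse direction: translating a single STFT estimate with window $g(0)\neq 0$ back into the cut-off formulation of Definition \ref{Wf_t_s1} requires producing an admissible $\phi$ with $\phi(x_0)\neq 0$ and matching decay, and the delicate part is ensuring that the parameter $k$ in \eqref{TeoremaWFUslov2} transfers correctly to the parameter $h$ in \eqref{WFsetbyProduzena} through the relation $T_{\tau,\sigma}(k,|\xi|)$. I expect to use Lemma \ref{OsobineAsocirane}, which compares $T_{\tau,\sigma}(h,\cdot)$ with the one-parameter $T_{\tau',\sigma}$ for nearby values of $\tau$; this absorbs the shift in parameters at the cost of an arbitrarily small change in $\tau$, which is harmless within the Roumieu class. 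The estimates \eqref{WFsetinfimum}, \eqref{WFsetbyProduzena} and the asymptotics \eqref{KonacnaocenaAsocirana} then complete the argument.
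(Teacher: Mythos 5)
Your architecture coincides with the paper's proof: the forward direction uses cut-off independence applied to the translated windows $\phi=T_x\bar g$ together with the identity $V_gu(x,\xi)=\widehat{u\,T_x\bar g}(\xi)$, and the converse evaluates at $x=x_0$ via $\psi=T_{x_0}\bar g$, $\psi(x_0)=\overline{g(0)}\neq 0$. However, two steps in your plan need correction. In the forward direction, uniformity of the constants over $x\in K$ does \emph{not} follow merely from the boundedness of the family $\{T_x\bar g:\ x\in K\}$ in $\mathcal D_{\{\tau,\sigma\}}$: the constants $C,h$ in \eqref{WFsetbyProduzena} depend a priori on the individual cut-off, so a uniform boundedness argument is required. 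The paper supplies it by observing that the set $H_h=\{e^{T_{\tau,\sigma}(1/h,|\xi|)}e^{-i\xi\cdot}u :\ \xi\in\Gamma_0\}$ is weakly bounded on the barrelled space $\mathcal D^{K_1}_{\{\tau,\sigma\}}(\mathbb R^d)$ and hence equicontinuous; pairing this equicontinuous family with the bounded set $\{T_x\bar g\}$ is what yields \eqref{TeoremaWFUslov2} with constants independent of $x$. You flag the issue but do not provide this mechanism.

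More substantively, your treatment of the converse mis-identifies the parameter transfer. No asymptotic comparison is needed: directly from the definition \eqref{asociranaProduzena} one has
$T_{\tau,\sigma}(k,|\xi|)\geq \ln_+\bigl(k^{N^\sigma}|\xi|^N/N^{\tau N^\sigma}\bigr)$ for every $N\in\mathbb N$, hence
$e^{-T_{\tau,\sigma}(k,|\xi|)}\leq h^{N^\sigma}N^{\tau N^\sigma}|\xi|^{-N}$ with $h=1/k$ and the \emph{same} $\tau$; this is exactly the equivalence \eqref{WFsetinfimum} $\Leftrightarrow$ \eqref{WFsetbyProduzena} recorded before the theorem, and it closes the converse in one line, as in the paper. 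Your proposed detour through Lemma \ref{OsobineAsocirane} is not harmless: that lemma replaces $T_{\tau,\sigma}(k,\cdot)$ by $T_{\tau_2,\sigma}(\cdot)$ with $\tau_2>\tau$, which would only exclude $(x_0,\xi_0)$ from $\WF_{\{\tau_2,\sigma\}}(u)$; since $\WF_{\{\tau_2,\sigma\}}(u)\subseteq\WF_{\{\tau,\sigma\}}(u)$ for $\tau_2>\tau$, this is strictly weaker than the claimed conclusion. The Roumieu quantifier in Definition \ref{Wf_t_s1} ranges over $h$, not over $\tau$, so "an arbitrarily small change in $\tau$" is not absorbed by the class. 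Finally, the auxiliary devices you invoke are unnecessary here: no averaging in $x$ is needed (a single evaluation at $x_0$ suffices), and inverse closedness enters only in the cut-off independence result you correctly cite, not in the body of this proof.
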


\begin{proof} We follow the idea presented in \cite{PrangoskiPilip}, and give the proof to enlighten the difference between $\WF(u)$ and $ {\WF}_{\{\tau,\sigma\}}(u)$.

($\Rightarrow$) Assume that there is a conic neighborhood  $\Gamma$ of $\xi_0$, a
compact set $ K_1 $ in $ \mathbb R^d $, so that
for any $\phi\in \mathcal D_{\{\tau,\sigma\}} ^{K_1} (\mathbb R^d) $,
such that $\phi (x_0) \neq 0$, the estimate \eqref{WFsetbyProduzena} holds for some $C,h>0$.
Without loss of generality, we may assume that $K_1 =\overline{B_r (x_0)}$ for some $r>0$.

By \eqref{WFsetbyProduzena} it follows that the set
\begin{equation*}
    H_h = \{ e^{T_{\tau,\sigma} ( \frac{1}{h},|\xi|)} e^{-i\xi \cdot}  u(\cdot) \quad | \quad
\xi \in \Gamma_0 \}
\end{equation*}
is weakly bounded, and weakly continuous (since $D_{\{\tau,\sigma\}} ^{K_1} (\mathbb R^d) $ is barelled).

\par

Let  $K =\overline{B_{r/2} (x_0)}$, and consider the window
$ g \in \mathcal D_{\{\tau,\sigma\}} ^{ K_{x_0} } (\mathbb R^d)$, such that
$g  \neq 0$ on a neighborhood of $0$.

Then $ \phi \equiv T_{x} \bar{g}  \in \mathcal D_{\{\tau,\sigma\}} ^{K_1} (\mathbb R^d) $,
and $ \phi \neq 0$ on a neighborhood of $x_0$. By the equicontinuity of $H_h$
it follows that
\begin{multline}
\label{equicont}
| \langle  e^{T_{\tau,\sigma} ( \frac{1}{h},|\xi|)} e^{-i\xi \cdot}  u(\cdot),  T_{x} \bar{g}(\cdot)
\rangle | \leq
C_1 \sup_{|\alpha| \leq N} \sup_{t \in K_1} | D^{\alpha} g(t-x)|
\\
= C_1 \sup_{|\alpha| \leq N} \| D^{\alpha} g\|_{L^\infty}
\leq C
\end{multline}

>From the definition of STFT it follows that
\begin{equation*}
    V_{g}u(x,\xi) = \widehat{u T_x \bar{g} }(x,\xi),\;\;\; x,\xi \in \mathbb{R}^d.
\end{equation*}
This, together with
\eqref{equicont} implies
\begin{equation*}
    |V_{g}u(x,\xi)|=|\widehat{u T_x \bar{g} }|
\leq
C e^{-T_{\tau,\sigma}(1/h,|\xi|)},
\qquad \xi\in \Gamma,
\end{equation*}
for all $x\in K$, and for some constants $C,h>0$, which gives \eqref{TeoremaWFUslov2}.

Notice that we actually proved that \eqref{TeoremaWFUslov2} holds {\em for any} $g$ satisfying
\eqref{cut-off-g}.

\par

($\Leftarrow$) Let the window $g \in \mathcal D_{\{\tau,\sigma\}} ^{ K_{x_0}} (\mathbb R^d)$,
$g \neq 0$ in a neighborhood of $0$. Then
$\psi=T_{x_0}\bar{g}\in \mathcal D_{\{\tau,\sigma\}} ^K  (\mathbb R^d)$, $\psi (x_0) \neq 0 $,
and
\begin{equation*}
    |\widehat{\psi u}(\xi)|=|V_{g} u (x_0,\xi)|
\leq  A  e^{-T_{\tau,\sigma}(k,|\xi|)}
\leq C \frac{h^{N^{\sigma}}N^{\tau N^{\sigma}}}{|\xi|^N},\quad N\in \mathbb N, \quad
\xi \in \Gamma_0,
\end{equation*}
for some $C>0$ and $h= 1/k > 0$, and the proof is complete.
\end{proof}

By using Lemma \ref{OsobineAsocirane} and Theorem \ref{TeoremaAsocirana}
we can express the decay estimate  \eqref{TeoremaWFUslov2} in terms of the Lambert function as follows.

\begin{Corollary} \label{posledica1}
Let $u\in \mathcal D'(\mathbb R^d)$, $\tau>0$, $\sigma>1$, and let $W$ denote the Lambert function.
If $(x_0,\xi_0)\not\in \WF_{\{\tau,\sigma\}}(u)$ then
there exists a conic neighborhood  $\Gamma_0$ of $\xi_0$, a compact neighborhood
$ K $ of $x_0$, and $g$ satisfying  \eqref{cut-off-g} such that
\begin{equation*}
|V_{g} u(x,\xi)|\leq C e^{- c \left ( \frac{\ln^{\sigma }(|\xi|) }{\tau _2 W (\ln (|\xi|))} \right )^{\frac{1}{\sigma -1} } },\quad  x\in K,\,\xi\in \Gamma_0,
\end{equation*}
for some $c>0$, $ C>0$, and any $\tau_2 > \tau$.

Conversely, if there exists a conic neighborhood  $\Gamma_0$ of $\xi_0$, a compact neighborhood
$ K $ of $x_0$, and $g$ satisfying  \eqref{cut-off-g} such that
\begin{equation}
   \label{TeoremaWFUslov4}
|V_{g} u(x,\xi)|\leq C e^{- c \left ( \frac{\ln^{\sigma }(|\xi|) }{\tau _1 W (\ln (|\xi|))} \right )^{\frac{1}{\sigma -1} } },\quad  x\in K,\,\xi\in \Gamma_0,
\end{equation}
holds for some $c>0$, $ C>0$, and $\tau_1 < \tau$,
then $(x_0,\xi_0)\not\in \WF_{\{\tau,\sigma\}}(u)$.
\end{Corollary}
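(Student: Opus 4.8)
The plan is to deduce Corollary \ref{posledica1} directly from Theorem \ref{NezavisnostWFThm} by translating the estimate \eqref{TeoremaWFUslov2}, which is phrased in terms of the two-parameter associated function $T_{\tau,\sigma}(k,|\xi|)$, into the asymptotic Lambert-function form. The two ingredients have already been set up: Lemma \ref{OsobineAsocirane} bridges the two-parameter function $T_{\tau,\sigma}(h,\cdot)$ and the ordinary one-parameter associated function $T_{\tau',\sigma}(\cdot)$ up to shifts of the parameter $\tau$, while Theorem \ref{TeoremaAsocirana} gives the sharp two-sided asymptotics of $T_{\tau,\sigma}(h)$ in terms of $\ln^{\frac{\sigma}{\sigma-1}}(h)/W^{\frac{1}{\sigma-1}}(\ln h)$. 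So the corollary is essentially a bookkeeping exercise chaining these two results onto the output of Theorem \ref{NezavisnostWFThm}.

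For the forward direction, I would start from the conclusion of Theorem \ref{NezavisnostWFThm}: since $(x_0,\xi_0)\notin\WF_{\{\tau,\sigma\}}(u)$, there exist $\Gamma_0$, $K$, a window $g$ satisfying \eqref{cut-off-g}, and constants $C,k>0$ with $|V_g u(x,\xi)|\leq C e^{-T_{\tau,\sigma}(k,|\xi|)}$ for $x\in K$, $\xi\in\Gamma_0$. Fix any $\tau_2>\tau$ and pick $\tau'$ with $\tau<\tau'<\tau_2$. Applying the upper bound in Lemma \ref{OsobineAsocirane} with $\tau_1\rightsquigarrow\tau'$ I get $T_{\tau,\sigma}(k,|\xi|)\geq T_{\tau_2,\sigma}(|\xi|)+A$ for a suitable constant $A$ (using monotonicity in the parameter, so that the $\tau_2$ bound is controlled by $T_{\tau,\sigma}(k,\cdot)$). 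Then I invoke the lower bound in \eqref{KonacnaocenaAsocirana}/Theorem \ref{TeoremaAsocirana} to replace $T_{\tau_2,\sigma}(|\xi|)$ from below by $B_\sigma\,\tau_2^{-\frac{1}{\sigma-1}}\ln^{\frac{\sigma}{\sigma-1}}(|\xi|)/W^{\frac{1}{\sigma-1}}(\ln|\xi|)$ up to an additive constant; absorbing constants into $c$ and $C$ and using the equivalent Lambert form noted just after Theorem \ref{TeoremaAsocirana} yields exactly the claimed bound. The converse is symmetric: starting from \eqref{TeoremaWFUslov4} with $\tau_1<\tau$, I use the upper asymptotic in Theorem \ref{TeoremaAsocirana} to dominate the Lambert expression from below by $T_{\tau_1,\sigma}(|\xi|)$ up to constants, then apply the lower inequality of Lemma \ref{OsobineAsocirane} (with the roles of the parameters arranged so $\tau_1<\tau$) to obtain $|V_g u(x,\xi)|\leq C e^{-T_{\tau,\sigma}(k,|\xi|)}$, which is \eqref{TeoremaWFUslov2}, and conclude $(x_0,\xi_0)\notin\WF_{\{\tau,\sigma\}}(u)$ by the reverse implication of Theorem \ref{NezavisnostWFThm}.

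The main subtlety, and the only place requiring care, is tracking the order of the parameters $\tau_1<\tau<\tau_2$ so that the inequalities in Lemma \ref{OsobineAsocirane} point the right way. Lemma \ref{OsobineAsocirane} requires a strict separation $\tau_2>\tau>\tau_1$, which is precisely why the corollary can only assert the decay with a slightly \emph{worse} constant $\tau_2>\tau$ in the forward direction and demands a slightly \emph{better} constant $\tau_1<\tau$ in the converse — there is an unavoidable loss in the $\tau$-parameter coming from the fact that the two-parameter function $T_{\tau,\sigma}(k,\cdot)$ is not exactly comparable to $T_{\tau,\sigma}(\cdot)$ but only to neighboring $T_{\tau',\sigma}(\cdot)$. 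The remaining work is purely constant-chasing: every step only modifies $c$ and $C$ and shifts arguments by bounded amounts, and the asymptotic equivalences $W(Cx)\sim W(x)$ and $W(x)\sim\ln x$ recorded in subsection \ref{subsec:lambert} guarantee that replacing $|\xi|$ by $1+|\xi|$ or absorbing multiplicative constants inside the logarithms does not affect the final form. Hence no genuinely hard estimate remains; the content is entirely in correctly composing the two cited results.
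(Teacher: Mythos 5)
Your proposal is correct and follows essentially the same route as the paper, which proves the corollary precisely by chaining Theorem \ref{NezavisnostWFThm} with Lemma \ref{OsobineAsocirane} and the two-sided estimate \eqref{KonacnaocenaAsocirana} of Theorem \ref{TeoremaAsocirana}, with the strict separation $\tau_1<\tau<\tau_2$ accounting for the parameter loss exactly as you explain. The only blemish is terminological: in the forward direction you invoke the ``upper bound'' of Lemma \ref{OsobineAsocirane} (and an unnecessary intermediate $\tau'$) where you in fact use its left-hand inequality $T_{\tau_2,\sigma}(|\xi|)+A\leq T_{\tau,\sigma}(k,|\xi|)$, but the inequality you write down is the correct one.
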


As a combination of results from Theorem \ref{Thm:Peli-Viner} ii) and Theorem \ref{SingularSupportTheorema}, we can use Corollary \ref{posledica1} to characterize local regularity of  $u\in \mathcal D'(\mathbb R^d)$. Namely if \eqref{TeoremaWFUslov4} holds, then
$x_0 \not\in \sing_{\{\tau,\sigma\}}(u)$, so that
$u\in \mathcal E_{\{\tau,\sigma\}}(\Omega)$ in a  neighborhood $\Omega$ of $x_0$
(see Definition \ref{DefinicijaSingSup}).

\subsection{Propagation of singularities}
One of the main properties of wave-front sets is microlocal hypoelipticity. We first  recall the notion of the characteristic set of an operator and the main property of its principal symbol.

If $ P(x,D)=\sum_{|\alpha|\leq m} a_{\alpha } (x) D^{\alpha}$ is a differential operator of order $m$ in $\mathbb R^d$ and
$a_\alpha \in C^\infty (\mathbb R^d)$, $|\alpha|\leq m $,  then
its characteristic set is given by
\begin{equation*}
    {\rm Char}(P(x,D))=
\bigcup_{x\in \mathbb R^d}
\left\{(x,\xi)\in \mathbb R^d\times \mathbb R^d\backslash\{0\} \mid P_m(x,\xi)=0 \right\}.
\end{equation*}

Here $P_m(x,\xi)=\sum_{|\alpha|=m}a_{\alpha}(x)\xi^{\alpha} \in C^{\infty}(\mathbb R^d\times\mathbb R^d\backslash\{0\})$
is the principal symbol of $ P(x,D).$
If ${\rm Char}(P(x,D))= \emptyset$, then the operator $P(x,D)$ is {\em hypoelliptic}.

Noe, for the Roumieu wave-front $\WF_{\{\tau,\sigma\}}(u)$ we have the following theorem on the paopagation of singularities.

\begin{Theorem}
\label{FundamentalTheoremNK}
Let $\tau>0,$ $\sigma\geq 1$, $u\in \mathcal D'(\mathbb R^d)$ and let $P(x,D)=\sum_{|\alpha|\leq m}a_{\alpha}(x)D^{\alpha}$ be partial differential operator of order $m$ such that $a_{\alpha}(x)\in \mathcal E_{\{\tau,\sigma\}}(\mathbb R^d)$, $|\alpha|\leq m $. Then
\begin{equation*}
\label{FundamentalEstimateNK}
\WF_{\{2^{\sigma-1}\tau,\sigma\}}(f)\subseteq \WF_{\{2^{\sigma-1}\tau,\sigma\}}(u)\subseteq \WF_{\{\tau,\sigma\}}(f) \cup {\rm Char}(P(x,D)),
\end{equation*}
where $P(x,D)u=f$ in $\mathcal D'(\mathbb R^d)$.
In particular,
\begin{equation}
\label{hypo}
\WF_{0,\sigma}(f)\subseteq \WF_{0,\sigma}(u)\subseteq \WF_{0,\sigma}(f) \cup {\rm Char}(P(x,D)),
\end{equation}
where $ \WF_{0,\sigma}(u)= \bigcap_{\tau>0}\WF_{\{\tau,\sigma\}}(u)$.
\end{Theorem}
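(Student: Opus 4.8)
The plan is to prove the two inclusions in \eqref{FundamentalEstimateNK} separately and then obtain \eqref{hypo} by intersecting over $\tau>0$. Throughout I would work with the Fourier-analytic formulation of the wave-front set: by Definition \ref{Wf_t_s1} and its reformulation \eqref{WFsetbyProduzena} via the associated function, the condition $(x_0,\xi_0)\notin\WF_{\{\tau,\sigma\}}(u)$ amounts to a decay estimate $|\widehat{\phi u}(\xi)|\leq C\,e^{-T_{\tau,\sigma}(1/h,|\xi|)}$ on a conic neighborhood $\Gamma_0$ of $\xi_0$, for a suitable cut-off $\phi\in\mathcal D_{\{\tau,\sigma\}}(\mathbb R^d)$ with $\phi\equiv 1$ near $x_0$. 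Reducing both inclusions to such estimates is what makes the coefficient regularity $a_\alpha\in\mathcal E_{\{\tau,\sigma\}}(\mathbb R^d)$ and the algebraic properties of $(M_p^{\tau,\sigma})$ directly usable.

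For the first inclusion $\WF_{\{2^{\sigma-1}\tau,\sigma\}}(f)\subseteq\WF_{\{2^{\sigma-1}\tau,\sigma\}}(u)$ I would prove that $P(x,D)$ does not enlarge the wave-front set. Here the key preliminary remark is that $\sigma\geq 1$ forces $2^{\sigma-1}\tau\geq\tau$, so $a_\alpha\in\mathcal E_{\{\tau,\sigma\}}(\mathbb R^d)\subseteq\mathcal E_{\{2^{\sigma-1}\tau,\sigma\}}(\mathbb R^d)$ and the whole argument takes place in the single fixed class with parameter $2^{\sigma-1}\tau$. Assuming $(x_0,\xi_0)\notin\WF_{\{2^{\sigma-1}\tau,\sigma\}}(u)$, pick $\phi$ as above together with a second cut-off $\psi$ supported where $\phi\equiv 1$, and write $\psi f=\sum_{|\alpha|\leq m}\psi a_\alpha\,D^\alpha(\phi u)$. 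Passing to the Fourier side, each $D^\alpha$ contributes a factor $\xi^\alpha$ absorbed into the rapid decay, while multiplication by $\psi a_\alpha\in\mathcal E_{\{2^{\sigma-1}\tau,\sigma\}}(\mathbb R^d)$ becomes convolution with a kernel decaying like $e^{-T_{2^{\sigma-1}\tau,\sigma}}$; the algebra property (Proposition \ref{propalgebra}) and the convolution rule $\widetilde{(M.2)}$ from Lemma \ref{osobineM_p_s} then close the estimate at the same parameter $2^{\sigma-1}\tau$.

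The second inclusion $\WF_{\{2^{\sigma-1}\tau,\sigma\}}(u)\subseteq\WF_{\{\tau,\sigma\}}(f)\cup\mathrm{Char}(P(x,D))$ is the microlocal hypoellipticity statement and is the heart of the proof. Fix $(x_0,\xi_0)\notin\mathrm{Char}(P(x,D))\cup\WF_{\{\tau,\sigma\}}(f)$. Since $P_m(x_0,\xi_0)\neq 0$, by continuity there is a conic neighborhood $\Gamma_0$ and $R>0$ on which $|P_m(x,\xi)|\geq c\,|\xi|^m$, which permits inverting the principal part microlocally and building an approximate parametrix whose symbol is $1/P_m$ corrected by lower-order terms. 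I would then run a bootstrap: localizing $P(x,D)u=f$ as $P(x,D)(\phi u)=\phi f+[P,\phi]u$, the commutator $[P,\phi]$ has order $m-1$ with coefficients again in $\mathcal E_{\{\tau,\sigma\}}(\mathbb R^d)$, so the ellipticity estimate recovers the decay of $\widehat{\phi u}$ on $\Gamma_0$ from that of $\widehat{\phi f}$, gaining $m$ powers of $|\xi|$ per step. Iterating and reassembling, the coefficient regularity enters through repeated products, and the non-multiplicative rule $\widetilde{(M.2)}$ is exactly what upgrades $\tau$ to $2^{\sigma-1}\tau$ on the left-hand side.

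The main obstacle is the bookkeeping of constants through this iteration. Because $(M_p^{\tau,\sigma})$ obeys only $\widetilde{(M.2)}$ rather than Komatsu's $(M.2)$, the successive parametrix and commutator estimates carry factors of the form $C^{p^\sigma}$ that must be recombined into a single bound $h^{N^\sigma}N^{2^{\sigma-1}\tau N^\sigma}$; verifying that the accumulated estimates close at the parameter $2^{\sigma-1}\tau$, and not at something worse, is the delicate point, and it is precisely here that the factor $2^{\sigma-1}$ is forced, collapsing to the classical Gevrey statement when $\sigma=1$. Finally, to deduce \eqref{hypo} I intersect the established chain over all $\tau>0$: since $\{2^{\sigma-1}\tau:\tau>0\}=(0,\infty)$, both outer terms reduce to $\WF_{0,\sigma}(f)$ and $\WF_{0,\sigma}(u)$, while $\bigcap_{\tau>0}[\WF_{\{\tau,\sigma\}}(f)\cup\mathrm{Char}(P(x,D))]=\WF_{0,\sigma}(f)\cup\mathrm{Char}(P(x,D))$ because $\mathrm{Char}(P(x,D))$ is independent of $\tau$.
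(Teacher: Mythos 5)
Your route coincides with the one the paper itself points to: the survey does not prove Theorem \ref{FundamentalTheoremNK} here but defers to \cite{PTT-02}, describing that proof as a nontrivial modification of \cite[Theorem 8.6.1]{H} combined with inverse closedness (Theorem \ref{teoremaInverseClosedness}) and Paley--Wiener type estimates (Theorem \ref{Thm:Peli-Viner}). Your two-inclusion scheme (pseudo-locality of $P(x,D)$ for the first inclusion, microlocal ellipticity plus a commutator bootstrap for the second), your identification of $\widetilde{(M.2)}$ as the source of the factor $2^{\sigma-1}$ and of its collapse at $\sigma=1$, and your derivation of \eqref{hypo} by intersecting over $\tau>0$ (using that $\tau\mapsto 2^{\sigma-1}\tau$ is a bijection of $(0,\infty)$ and that $\bigcap_\tau (A_\tau\cup B)=(\bigcap_\tau A_\tau)\cup B$ for a $\tau$-independent $B$) are all consistent with that plan.

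There are, however, two concrete gaps at the heart of the argument. First, "inverting the principal part microlocally" via the symbol $1/P_m$ on the cone where $|P_m(x,\xi)|\geq c|\xi|^m$ requires that reciprocals of nonvanishing functions of the class remain in the class: this is precisely the inverse closedness of $\mathcal E_{\{\tau,\sigma\}}(\mathbb R^d)$, which the paper explicitly lists as an essential ingredient and which you use implicitly without ever invoking Theorem \ref{teoremaInverseClosedness}. Second, and more seriously, the bootstrap is asserted rather than closed: to convert the gain of $m$ powers of $|\xi|$ per step into the $N$-dependent bounds \eqref{WFset-definition}, one must run an induction over $N$ in which each step multiplies by coefficients, derivatives of $1/P_m$, and derivatives of cut-offs. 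Since only $\widetilde{(M.2)}$ (not Komatsu's $(M.2)$) is available, each Leibniz splitting naively costs a fresh parameter doubling $\tau\to 2^{\sigma-1}\tau$, so the induction must be organized -- as in \cite{PTT-02}, with an $N$-indexed family of cut-offs whose derivatives up to order comparable to $N$ carry bounds of the type $C^{N^{\sigma}}h^{|\alpha|^{\sigma}}N^{\tau N^{\sigma}}$ -- so that the jump is incurred only once overall rather than once per iteration; otherwise the parameter degenerates and the left-hand class in the conclusion would be strictly worse than $\{2^{\sigma-1}\tau,\sigma\}$. You yourself flag this as "the delicate point" but supply no mechanism for it, and since the entire content of the theorem beyond the classical Gevrey case lives exactly there, the proposal as it stands is a correct and well-aligned plan rather than a proof.
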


The proof of Theorem \ref{FundamentalTheoremNK} uses inverse closedness (Theorem
\ref{teoremaInverseClosedness}), Paley-Wiener type estimates (Theorem
\ref{Thm:Peli-Viner}), and contains nontrivial modifications of
the proof of \cite[Theorem 8.6.1]{H}. We refer to \cite{PTT-02} for a detailed proof. Note that if $\sigma=1$ and $\tau>1$, we recover the result for propagation of singularities when the coefficients are Gevrey regular functions, and  $\WF_{0,\sigma}(f) = \WF_{0,\sigma}(u)$ in \eqref{hypo} reveals the hypoellipticity of $(P(x,D)$.

\section{Applications}  \label{sec:appl}

\subsection{A strictly hyperbolic partial-differential equation}

Cicognani and Lorenz in \cite{CL}
considered the Cauchy problem for strictly hyperbolic $m$-th order
partial-differential equations (PDEs)  of the form
\begin{multline} \label{cauchy}
D_t ^m u = \sum_{j=0} ^{m-1} A_{m-j} (t,x, D_x) D^j _t u + f(t,x), \\
D^{k-1} _t u (0,x) = g_k (x), \quad (t,x) \in [0,T] \times  \mathbb R^d, \quad
k=1,\dots, m,
\end{multline}
where
\begin{equation*}
    A_{m-j} (t,x, D_x) =
\sum_{|\gamma| + j \leq m} a_{m-j,\gamma} (t,x)  D^\gamma _x,
\end{equation*}
where $f$ and $g_k$, $k=1,\dots, m$, satisfy certain Sobolev type regularity conditions (cf. (SH3-W) and (SH4-W) in \cite{CL}).

\noindent and studied well-posedness when the coefficients are low-regular in time, and smooth in space. More precisely, it is assumed that the coefficients $ a_{m-j,\gamma} $ satisfy conditions of the form
\begin{equation} \label{coeff-in-PDE}
|  D^\beta _x  a_{m-j,\gamma} (t,x) - D^\beta _x  a_{m-j,\gamma} (s,x)| \leq C K_{|\beta|}
\mu (|t-s|), \quad 0 \leq |t-s| \leq 1, \quad x \in \mathbb R^d,
\end{equation}
where $\mu$ is a modulus of continuity, and
$(K_{|\beta|})$ is a defining sequence (also called a weight sequence).

The modulus of continuity $\mu$ is used to describe the (low) regularity in time, whereas  $(K_{|\beta|})$ describes the regularity in space.

When $\mu $ is a weak modulus of continuity,
\begin{equation*}
    \mu (s) = s (\log  \frac{1}{s} + 1) \log ^{[m]} ( \frac{1}{s} ), \qquad s > 1,
\end{equation*}
(Log-Log$^{[m]}$-Lip-continuity), a suitable weight function $ \eta $ which defines the solution space is chosen to be
\begin{equation*}
    \eta (s) = \log (s)(\log^{[m]} (s))^{1+\varepsilon} + c_m,
\end{equation*}
where $\varepsilon>0$ is arbitrarily small and $c_m>0$ such that $\eta (s) \geq 1$ for all $s > 1$.

We refer to  \cite{CL} for a detailed analysis of  \eqref{cauchy}, and note that the relation between the modulus of continuity $\mu$ and the weight function $\eta$ is given by
\begin{equation*}
    \lim_{|\xi| \rightarrow \infty}
\frac{\mu (\frac{1}{\langle \xi \rangle}) \langle \xi \rangle}{\eta (\langle \xi \rangle)}
 = 0, \qquad\langle \xi \rangle ^2 = 1 + |\xi |^2, \;\; \xi \in  \mathbb R^d,
\end{equation*}
while the condition which links the weight sequence
$(K_{p})$ to the weight function
$\eta$  is given by
\begin{equation*}
    \inf_{p\in \mathbb N} \frac{K_p}{\langle \xi \rangle ^p} \leq C e^{-h \eta (\langle \xi \rangle)},
\end{equation*}
for some $h,C >0$, which is essentially the relation between  the Carleman associated function and
the Komatsu associated function as given in Lemma \ref{lema:katz-komatsu}.

One of the conclusions in \cite{CL} is that the Cauchy problem \eqref{cauchy} is well-posed  if $a_{m-j,\gamma} (t,x) \in
 {\mathcal E}_{\{ 1, 2\}}( \mathbb{R}^d)$
uniformly in $x$ for every fixed $t$. In other words, the sequence $(K_{|\beta|})$ in
\eqref{coeff-in-PDE} is given by $\displaystyle K_p = p^{p^2}$.

\subsection{Generalized definition of ultradifferentiable classes}
\label{matrices-approach}

It is recently demonstrated in \cite{Garrido-2} that the extended Gevrey classes
are prominent example of ultradifferentiable functions defined in the framework of
generalized weighted matrices approach.

The main idea behind the weighted matrices approach as given in \cite{Schindl} and \cite{RS}
is to establish a general framework for considering the  Braun-Meise-Taylor and Komatsu approach
to ultradifferentiable functions in a unified way. To include the extended Gevrey classes which are called PTT-classes in \cite{Garrido-1}
and \cite{Garrido-2} (after Pilipovi\'c-Teofanov-Tomi\'c), the so-called exponential sequences $ \Phi = (\Phi_p)_{p \in  \mathbb N}$, and the related generalized weighted matrix setting
are introduced in  \cite{Garrido-2}.
One of the main observations in  \cite{Garrido-2} is that the exponential sequences $ \Phi$ (such as $(h^{p^\sigma})_{p \in  \mathbb N}$, for some $h>0$) yield
"ultradifferentiable classes beyond geometric growth factors", under mild regularity and growth assumptions on $ \Phi$. In such context, PTT-classes constitute a genuine examples of
of ultradifferentiable functions defined by weight matrices.

This approach reveals that, apart from stability properties mentioned in Section \ref{SectionSpaces}, PTT-classes enjoy almost analytic extension
\cite{Rainer}, and almost harmonic extension
\cite{DebVin}. Moreover, PTT-classes are  a convenient tool for the study of Borel mappings. More precisely, the asymptotic Borel mapping,
which sends a function into its series of asymptotic expansion in a sector, is known to be surjective for arbitrary openings in the framework of ultraholomorphic classes associated with sequences of rapid growth.
By using the PTT-classes ${\mathcal E}_{\{\tau, \sigma\}}( \mathbb{R}^d)$, given by $M^{\tau,\sigma}_p=p^{\tau p^\sigma}$, $ p \in \mathbb{N} $, $\tau>0$,  $ 1 < \sigma < 2$,
Jim\'enez-Garrido, Lastra and Sanz, presented a constructive proof of
the surjectivity of the Borel map in sectors of the complex plane for the ultraholomorphic class associated with those specific sequences.
In fact, the asymptotic behavior of the associated function given in terms of the Lambert function
(see Theorem \ref{TeoremaAsocirana}) plays a prominent role in these investigations.
We refer to  \cite{Garrido-1} for more details.

%

\section*{Appendix}

\begin{proof} (proof of Lemma \ref{osobineM_p_s})
$(M.1)$ obviously holds for $p =1$. For $p-1 \in \mathbb N$ we observe that
$\ln x^{\tau x^\sigma } = \tau x^\sigma \ln x$ is a convex function when $x>1$,
which implies
\begin{equation*}
    2\tau p^\sigma \ln p \leq
\tau (p-1)^\sigma \ln (p-1) + \tau (p+1)^\sigma \ln (p +1),
\end{equation*}
and $(M.1)$ follows after taking exponential.

To show $\widetilde{(M.2)}$ we use  $(p+q)^{\sigma}\leq 2^{{\sigma}-1}(p^{\sigma}+q^{\sigma})$
which implies
\begin{equation*}
    (p+q)^{\tau (p+q)^{\sigma}}\leq (p+q)^{\tau 2^{\sigma-1}p^{\sigma}}(p+q)^{\tau 2^{\sigma-1}q^{\sigma}},
\quad p,q \in \mathbb N.
\end{equation*}
The logarithm of the first term on the right hand side of the inequality can be estimated as follows:
\begin{eqnarray*}
 \tau 2^{\sigma-1}p^{\sigma}\ln(p+q)&=& \tau 2^{\sigma-1}p^{\sigma}\Big(\ln p +\ln\Big(1+\frac{q}{p}\Big)\Big) \\
& \leq &
 \tau 2^{\sigma-1}p^{\sigma}\ln p + \tau 2^{\sigma-1}qp^{{\sigma}-1} \\
 & \leq & \tau 2^{\sigma-1}p^{\sigma}\ln p + \tau 2^{\sigma-1}(p+q)^{\sigma}.
\end{eqnarray*}
By taking exponential we obtain
\begin{equation*}
    (p+q)^{ \tau 2^{\sigma-1}p^{\sigma}}\leq p^{\tau 2^{\sigma-1}p^{\sigma}}e^{\tau 2^{\sigma-1}(p+q)^{\sigma}},
\end{equation*}
and by  replacing the roles of $p$ and $q$ we get
\begin{equation*}
    (p+q)^{ \tau 2^{\sigma-1}q^{\sigma}}\leq q^{\tau 2^{\sigma-1}q^{\sigma}}e^{\tau 2^{\sigma-1}(p+q)^{\sigma}},
\end{equation*}
thus
\begin{equation*}
    (p+q)^{\tau (p+q)^{\sigma}}\leq p^{\tau 2^{\sigma-1}p^{\sigma}}  q^{\tau 2^{\sigma-1}q^{\sigma}} e^{\tau 2^{\sigma}(p+q)^{\sigma}},
\end{equation*}
and $\widetilde{(M.2)}$ is proved.

Let us show that  $\widetilde{(M.2)'}$ holds true.
Put ${\sigma}=n+ \delta$ where $n\in \mathbb N$, $0<\delta\leq 1$. If
${\sigma}\not\in \mathbb N$ then $n=\lfloor {\sigma}\rfloor,$ $0<\delta<1,$ while
$n= {\sigma} -1,\, \delta=1,$ if $ {\sigma}\in \mathbb N$.
By the binomial formula we have:
\begin{eqnarray*}
( p+1)^{\sigma} &\leq& (p+1)^n(p^{\delta}+1) \\
&=& p^{\sigma}+\sum_{k=1}^{n}\binom{n}{k}p^{{\sigma}-k}
+ \sum_{k=0}^{n}\binom{n}{k} p^{n-k} \\
& =  & p^{\sigma} + 2^{n} p^{{\sigma}-1} + 2^n p^n  \\
& \leq  & p^{\sigma} + 2^{n+1} p^{{\sigma}- \delta},
\end{eqnarray*}
wherefrom
\begin{equation} \label{m.2}
\tau (p+1)^{\sigma}\ln(1+p) \leq \tau  p^{\sigma}\ln(1+p)+ \tau 2^{n+1}q^{\sigma}p^{{\sigma}-\delta}\ln(1+p).
\end{equation}
The first term on the right hand side of the inequality \eqref{m.2} can be estimated by
\begin{eqnarray*}
\label{m.2one}
\tau p^{\sigma}\ln(1+p)&= & \tau p^{\sigma} \ln p (1+\frac{1}{p})  =
\tau p^{\sigma} (\ln p  + \ln (1+\frac{1}{p}))\\
&\leq& \tau p^{\sigma}\ln p + \tau p^{\sigma - 1} \leq \tau p^{\sigma}\ln p + \tau p^{\sigma},
\end{eqnarray*}
while for the second term  we use
\begin{eqnarray*} \label{m.2two}
\tau 2^{n+1}p^{{\sigma}-\delta}\ln(1+p)&=&
\tau 2^{n+1} p^{{\sigma}-\delta}
(\ln p +\ln (1+\frac{1}{p}))\nonumber\\
& \leq &
\tau 2^{n+1}p^{{\sigma}} C + \tau 2^{n+1}p^{{\sigma}}\ln 2\,.
\end{eqnarray*}
Here we used $ p^{-\delta } \ln p \leq  C$ for some $C>0$.
Thus we have
\begin{eqnarray*} \label{m.2three}
\tau (p+1)^{\sigma}\ln(1+p)  &\leq&
\tau p^{\sigma}\ln p +  \tau p^{\sigma} (1+ 2^{n+1} \tilde C),
\end{eqnarray*}
with $ \tilde C = C + \ln 2$. By taking exponential we obtain
\begin{equation*}
    (p+1)^{\tau (p+1)^{\sigma}}
\leq B^{p^\sigma} M^{\tau, \sigma} _p,
\end{equation*}
for some $B>0$, which gives $\widetilde{(M.2)'}$.

\par

To prove $(M.3)'$ we use $2 \leq  (1+1/p)^p$, $p\in \mathbb N$, which gives
\begin{equation*} 
{\tau}\, p^{{\sigma}-1} \ln 2 \leq \tau p^{{\sigma}} \ln \Big(1+\frac{1}{p}\Big)\leq \tau p^{{\sigma}-1},
\quad p\in \mathbb N,
\end{equation*}
i.e.
\begin{equation}   \label{expnejednakost}
2^{\tau p^{{\sigma}-1}}\leq \Big(1+\frac{1}{p}\Big)^{\tau p^{{\sigma}}}\leq e^{\tau p^{{\sigma}-1}},\,
\quad p\in \mathbb N.
\end{equation}
The  left hand side of \eqref{expnejednakost} and
\begin{equation*}
    p^{\sigma}\geq (p-1)^{\sigma-1}p=(p-1)^{\sigma}+(p-1)^{\sigma-1},  \qquad
p\in \mathbb N,
\end{equation*}
give
\begin{eqnarray*} 
\sum _{p=1}^{\infty}\frac{(p-1)^{\tau (p-1)^{\sigma}}}{p^{\tau p^{\sigma}}}
&\leq&
\sum _{p=1}^{\infty}
\frac{(p-1)^{\tau (p-1)^{\sigma}}}{ p^{\tau ( (p-1)^{\sigma} + (p-1)^{{\sigma}-1}) }} \nonumber\\
&=&
\sum _{p=1}^{\infty}
\left ( (1 - \frac{1}{p})^{\tau (p-1)^{\sigma}} \right )
\frac{1}{p^{\tau (p-1)^{{\sigma}-1} }} \\ \nonumber
&\leq&
\sum _{p=1}^{\infty} \frac{1}{{(2p)^{\tau {(p-1)^{{\sigma}-1}}}}}   <\infty, \nonumber
\end{eqnarray*}
which is $(M.3)'$.
\end{proof}

\begin{proof} (proof of Lemma \ref{lm:konstante-i-nizovi})
$i)$ $(\Rightarrow) $ Let $a_p\leq C h^{p^{\sigma}}$, $p\in {\mathbb N}_0$, for some $C,h>0$, let $\left(r_j\right) $ be any sequence in $ \mathcal{R}$, and let $j_0 \in {\mathbb N}_0$ be such that $\displaystyle \frac{h}{r_j}\leq 1$, for all $j\geq j_0$. Then
\begin{equation*}
a_p\leq C h^{p^{\sigma}}=C \prod_{j=1}^{j_0}h \prod_{j=j_0+1}^{p^{\sigma}}r_j \frac{h}{r_j}\leq C h^{j_0}\prod_{j=1}^{p^{\sigma}}r_j
\leq C_1 \prod_{j=1}^{p^{\sigma}}r_j = C_1 R_{p,\sigma},
\end{equation*}
for large enough $p \in \mathbb{N}_0$ and suitable $C_1>0$. This proves \eqref{pre2}.

$(\Leftarrow) $ The opposite part we prove by contradiction. Assume that \eqref{pre2} holds for arbitrary $\left(r_j\right) \in  \mathcal{R}$, and that
\begin{equation*}
\displaystyle \sup \left\{\frac{a_{p}}{h^{p^{\sigma}}}: p \in \mathbb{N}_0\right\}=\infty
\qquad \text{ for every } \qquad
h>0.
\end{equation*}
Thus, for every $n\in\mathbb{N}$ and $h:=n$ there exists $p_n\in \mathbb{N}$ such that
\begin{equation*}
\frac{a_{p_n}}{n^{ \lfloor p_n^\sigma \rfloor}}>n.
\end{equation*}

If $n=1$, then there exists $p_1\in \mathbb{N}$ such that $a_{p_1}>1$, and obviously
\begin{equation*}
\frac{a_{p_1}}{r_1 r_2\dots r_{\lfloor p_1^\sigma \rfloor}}>1
\end{equation*}
if $r_1=r_2=\dots=r_{\lfloor p_1^\sigma \rfloor}=1$.

Similarly, when $n=2$, there exists $p_2>p_1$ such that  $ \lfloor p_2 ^\sigma \rfloor> \lfloor p_1 ^\sigma \rfloor$, and
\begin{equation*}
\frac{a_{p_2}}{2^{\lfloor p_2^\sigma \rfloor}}>2.
\end{equation*}
By choosing $r_{\lfloor p_1^\sigma \rfloor + 1}=r_{\lfloor p_1^\sigma \rfloor + 2}
=\dots=r_{\lfloor p_2^\sigma \rfloor}=2$ we get
$\prod_{j=1}^{\lfloor p_2^\sigma \rfloor}r_j=2^{\lfloor p_2^\sigma \rfloor - \lfloor p_1^\sigma \rfloor }$,
wherefrom
\begin{equation*}
\frac{a_{p_2}}{r_1\dots r_{\lfloor p_2^\sigma \rfloor}} \geq \frac{a_{p_2}}{2^{\lfloor p_2^\sigma \rfloor}}>2.
\end{equation*}

Next, we take $p_3>p_2$ such that  $ \lfloor p_3 ^\sigma \rfloor> \lfloor p_2 ^\sigma \rfloor$, and
\begin{equation*}
\frac{a_{p_3}}{3^{\lfloor p_3^\sigma \rfloor}}>3,
\end{equation*}
so we can choose $r_{\lfloor p_2^\sigma \rfloor +1}=r_{\lfloor p_2^\sigma \rfloor+2}
=\dots=r_{\lfloor p_3^\sigma \rfloor}=3$ to obtain
\begin{equation*}
\prod_{j=1}^{\lfloor p_3^\sigma \rfloor }r_j
=
1^{\lfloor p_1^\sigma \rfloor}\cdot 2^{\lfloor p_2^\sigma \rfloor - \lfloor p_1^\sigma \rfloor}
\cdot
3^{\lfloor p_3^\sigma \rfloor - \lfloor p_2^\sigma \rfloor}
=\left(\frac{1}{2}\right)^{\lfloor p_1^\sigma \rfloor}\left(\frac{2}{3}\right)^{\lfloor p_2^\sigma \rfloor}3^{\lfloor p_3^\sigma \rfloor}<3^{\lfloor p_3^\sigma \rfloor}.
\end{equation*}
Thus for $n=3$ we get
\begin{equation*}
\frac{a_{p_3}}{r_1\dots r_{ \lfloor p_3^\sigma \rfloor}}=\frac{a_{p_3}}{\prod_{j=1}^{\lfloor p_3^\sigma \rfloor}r_j}>\frac{a_{p_3}}{3^{\lfloor p_3^\sigma \rfloor}}>3.
\end{equation*}

In the same fashion for any $n+1 \in \mathbb{N}$ we can find $p_{n+1}>p_n$ such that
 $\lfloor p_{n+1} ^\sigma \rfloor > \lfloor p_n ^\sigma \rfloor$, and by choosing
\begin{equation*}
r_{\lfloor p_{n} ^\sigma \rfloor +1}=r_{\lfloor p_n ^\sigma \rfloor+2}
=\dots=r_{\lfloor p_{n+1} ^\sigma \rfloor}= n+1
\end{equation*}
we obtain
\begin{equation*}
\frac{a_{p_{n+1}}}{\prod_{j=1}^{\lfloor p_n^\sigma \rfloor}r_j\cdot (n+1)^{\lfloor p_{n+1}^\sigma \rfloor - \lfloor p_n^\sigma \rfloor}}>\frac{a_{p_{n+1}}}{(n+1)^{\lfloor p_{n+1}^\sigma \rfloor}}>n+1.
\end{equation*}
By the construction it follows that $(r_j)\in  \mathcal{R}$, and for the sequence
\begin{equation*}
R_{p,\sigma}=\prod_{j=1}^{\lfloor p _n ^\sigma \rfloor} r_j
\end{equation*}
we obtain $\displaystyle\sup \left\{\frac{a_{p}}{R_{p,\sigma}}: p \in \mathbb{N}_0\right\}=\infty,$
which contradicts  \eqref{pre2}.

$ii)$ $(\Rightarrow) $ follows similarly as in $i)$.

$(\Leftarrow) $ Let \eqref{pre4} holds for every $h>0$, and put
\begin{equation*}
\displaystyle C_{h}:=\sup \left\{h^{p^{\sigma}} a_{p}:p\in N_0\right\}, \quad \text{ for } \quad h\geq 1.
\end{equation*}
We define
\begin{equation*}
H_0 = 1, \qquad H_j:=\sup \left\{\frac{h^j}{C_h}: h \geq 1\right\}, \quad j \in \mathbb{N}.
\end{equation*}
It is easy to see that $(H_j)$ is a well defined sequence which satisfies $(M.1)$, and that
$H_j/h^j$ tends to infinity as $j \rightarrow \infty$, for all $h\geq 1$. Therefore
$\left(r_j\right) \in  \mathcal{R}$, where $\displaystyle r_j=\frac{H_j}{H_{j-1}}$, $j\in {\mathbb N}$. We
note that
\begin{equation*}
H_{p^{\sigma}}a_p= \sup \left\{\frac{h^{p^{\sigma}}}{C_h}: h \geq 1\right\} a_p \leq 1,
\end{equation*}
and finally
\begin{equation*}
R_{p,\sigma} a_p = \Big(\prod_{j=1}^{p^{\sigma}}r_j\Big) a_p = H_{p^{\sigma}}a_p \leq 1,
\end{equation*}
which gives the statement.
\end{proof}



\end{document}